\documentclass[11pt,reqno]{amsart}
\usepackage{amsmath,amssymb,amsthm,amscd,stmaryrd}
\usepackage{enumitem}
\usepackage[hidelinks]{hyperref}
\numberwithin{equation}{section}
\allowdisplaybreaks[3]
\setlist{topsep=3pt,itemsep=1pt,parsep=0pt,partopsep=0pt}
\AtBeginDocument{%
  \setlength{\abovedisplayskip}{6pt plus 2pt minus 3pt}%
  \setlength{\belowdisplayskip}{6pt plus 2pt minus 3pt}%
  \setlength{\abovedisplayshortskip}{3pt plus 2pt}%
  \setlength{\belowdisplayshortskip}{4pt plus 2pt minus 2pt}%
}

\theoremstyle{plain}
\newtheorem{prop}{Proposition}[section]
\newtheorem{theo}[prop]{Theorem}
\newtheorem{coro}[prop]{Corollary}
\newtheorem{lemm}[prop]{Lemma}

\theoremstyle{definition}
\newtheorem{defi}[prop]{Definition}

\newtheorem{exam}[prop]{Example}
\newtheorem{rema}[prop]{Remark}

\def\cA{{\mathcal A}}
\def\cB{{\mathcal B}}
\def\cG{{\mathcal G}}
\def\cH{{\mathcal H}}
\def\cL{{\mathcal L}}
\def\cM{{\mathcal M}}

\def\cX{{\mathcal X}}
\def\cE{{\mathcal E}}
\def\bfE{{\mathbf E}}

\def\cV{{\mathcal V}}

\def\cY{{\mathcal Y}}
\def\cZ{{\mathcal Z}}
\def\fA{{\mathfrak A}}
\def\fC{{\mathfrak C}}
\def\fG{{\mathfrak G}}
\def\fH{{\mathfrak H}}
\def\fI{{\mathfrak I}}
\def\fK{{\mathfrak K}}
\def\fN{{\mathfrak N}}
\def\bG{{\mathbb G}}
\def\bP{{\mathbb P}}

\def\Br{{\operatorname{Br}}}

\def\ind{{\operatorname{ind}}}

\def\rk{{\operatorname{rk}}}
\def\Spec{{\operatorname{Spec}}}
\def\Hom{{\operatorname{Hom}}}
\def\Aut{{\operatorname{Aut}}}
\def\ra{{\rightarrow}}
\def\GL{{\operatorname{GL}}}
\def\Rep{{\operatorname{Rep}}}

\def\Vect{{\operatorname{Vect}}}

\def\im{{\operatorname{im}}}
\def\Tot{{\operatorname{Tot}}}

\def\rdim{{\operatorname{rdim}}}
\def\ed{{\operatorname{ed}}}

\def\cdim{{\operatorname{cdim}}}
\def\trdeg{{\operatorname{trdeg}}}

\def\Split{{\operatorname{Split}}}
\def\band{{\mathfrak b}}

\def\H{{\mathrm H}}

\title[Essential dimension and faithful rank of finite $p$-gerbes]{Essential dimension and faithful rank of finite $p$-gerbes}
\author{Tianzhi Yang}
\date{\today}
\address{Scuola Normale Superiore, Piazza dei Cavalieri 7, 56126 Pisa, Italy}
\email{tianzhi.yang@sns.it}

\begin{document}

\begin{abstract}
Let $p\ne\operatorname{char}(k)$.  We extend the Karpenko--Merkurjev
theorem from classifying stacks of finite $p$-groups to arbitrary finite
gerbes whose geometric inertia groups are $p$-groups, without assuming
that the gerbe is neutral or that its band is represented by a group
scheme over the base field.  We prove that the essential dimension at
$p$ is exactly the minimum faithful rank obtained after prime-to-$p$
base change, equivalently the faithful rank over a $p$-closure.

We also prove a relative form of the theorem for locally full morphisms
of finite $p$-gerbes: the relative faithful rank equals the supremum of
the essential $p$-dimensions of the fibers.

Finally, we introduce the quotient compression dimension, defined using tame quotient singularities with prescribed fundamental gerbe. For every finite $p$-gerbe $\cG/k$ we
show that its prime local version satisfies
\[
\ed_k(\cG;p)
\le \operatorname{qcdim}_p(\cG)
\le \ed_k(\cG;p)+1.
\]
Thus essential dimension at $p$ determines, up to at most one dimension,
the smallest quotient singularity realizing the gerbe after prime-to-$p$
localization.
\end{abstract}

\maketitle

\section{Introduction}\label{sec:introduction}

Throughout the article, $p$ is a prime and all base fields are assumed to
have characteristic different from $p$, unless explicitly stated otherwise.
All gerbes are algebraic and of finite type over their base field.  We call a
gerbe \emph{finite} if its inertia is finite.  Thus its geometric
automorphism group schemes are finite.  A \emph{finite $p$-gerbe} is a finite
gerbe whose geometric inertia groups are $p$-groups.  Under our standing
assumption $\operatorname{char}(k)\ne p$, the geometric inertia of every finite
$p$-gerbe considered in the main results is finite \'etale.  In the few
auxiliary statements where we work beyond $p$-gerbes, we explicitly assume
that the geometric inertia has order invertible in the base field. 

\subsection{Background and motivation}

Essential dimension measures the minimum number of independent parameters needed to define an object. Let $\mathrm{Field_k}$ be the category of field extensions of $k$, and $F:\mathrm{Field}_k\to \mathrm{Set}$ a covariant functor. For $\xi\in F(K)$, we say $\xi$ descends to $K/L/k$, if there exists $\xi_0\in F(L)$ such that $\xi_0|_K=\xi$. The essential dimension $\ed_k(\xi)$ of $\xi$ is then defined by
\[
\ed_k(\xi)=
\min_{\xi\text{ descends to }L}\trdeg_k(L).
\]
The essential dimension of $F$ is obtained by taking the supremum over all fields and all objects.

This notion was initially introduced by Buhler and Reichstein for finite groups \cite{BuhlerReichstein}; the general functorial formulation is due to Merkurjev, as presented in \cite{BerhuyFavi}. Brosnan--Reichstein--Vistoli subsequently defined essential dimension for algebraic stacks \cite[Definition~2.2]{BRV}. 

A basic illustration is the calculation of Brosnan--Reichstein--Vistoli
for the moduli stacks of pointed smooth and stable curves, including the
hyperelliptic locus; for $\cM_{g,n}$, away from several low genus
exceptions, one obtains the expected value $3g-3+n$. The appendix by
Fakhruddin computes the essential dimension of moduli stacks of abelian
varieties and principally polarized abelian varieties
\cite[Theorem~1.2, Theorem~7.2, and Theorem~A.1]{BRV}.  A second structural result is
the genericity theorem.  If $\cX$ is a smooth integral tame stack,
locally of finite type over $k$, with coarse moduli space $M$, then its
pullback to $k(M)$ is the generic gerbe $\cX_{k(M)}$, and
\[
\ed_k(\cX)
=\dim(M)+\ed_{k(M)}(\cX_{k(M)});
\]
see \cite[Theorem~6.1]{BRV}. The genericity theorem is then generalized to regular integral weakly tame stack by G.~Bresciani and A.~Vistoli
\cite[Introduction and Theorem~1.1]{BrescianiVistoliGeneric}.  Thus, for a
broad class of tame stacks, the computation of essential dimension is
reduced to understanding the essential dimension of gerbes. 

For the classifying stack of a constant $p$-group $P$, the decisive model is the theorem of Karpenko and Merkurjev: if $\operatorname{char}(k)
\ne p$, and $\mu_p\subset k$, then
\[
\ed_k(BP)=\ed_k(BP;p)=\rdim_k(P),
\]
where $\rdim_k(P)$ is the least dimension of a faithful
$k$-representation \cite[Theorem~4.1]{KM}. This result was subsequently extended by L\"otscher--MacDonald--Meyer--Reichstein to
possibly twisted finite $p$-groups, which becomes constant over a Galois extension of $p$-power degree; see \cite[Theorem~7.1]{LMMRTori}. These results still concern neutral gerbes. 

L\"otscher later developed the non-neutral direction for gerbes
banded by finite diagonalizable groups. More precisely, if a gerbe
$\cG$ is banded by $A=\ker(Q\to S)$, where $Q$ is an invertible torus and
$S$ is a split torus, he proves
\[
\ed(\cG)=\cdim(\cG)+\ed(A),
\]
where $\cdim(\cG)$ is the canonical dimension of $\cG$ \cite[Theorem~1.1]{LotscherGerbes}. For a finite diagonalizable group $A$, the quantity $\ed(A)$ is the smallest number of characters required to generate its character group, or, equivalently, the dimension of a minimal faithful representation of $A$; in this way, the computation inherently involves the representation rank. This also shows that, in the case of non-neutral gerbes, the Karpenko–Merkurjev equality involves contributions of an arithmetic nature.

It is crucial, however, to distinguish this invariant of the band from the one used in the present article: the number $\rdim(A)$ for a group scheme $A$ and the faithful rank $\rdim(\cG)$ of a gerbe—introduced below via vector bundles on $\cG$—are distinct concepts and, in general, do not coincide for a non-neutral gerbe.

To the best of the author's knowledge, the existing literature stops at
these settings.  In particular, there has not been a general analogue of
the Karpenko--Merkurjev equality for non-neutral finite $p$-gerbes with
possibly nonabelian bands.

\subsection{From groups to finite gerbes}

Passing from classifying stacks to arbitrary finite gerbes is not a formal generalization of the preceding results. If a gerbe is neutral, or if the gerbe is abelian, then its band is represented by a group scheme $G$ over the base field, and vector bundles on $\cG$ may be studied through representations of $G$. For a non-neutral gerbe, after choosing an object over a field extension one obtains an automorphism group there, but its descent is governed only by the band; when the band is nonabelian, the descent datum is genuinely outer and need not arise from a group scheme over $k$.

This phenomenon is quite common in practice. Non-neutral gerbes occur naturally as generic or residue gerbes. For instance, gerbes arise as the stack that classifies twisted forms of varieties, and a variety admits a model over its field of moduli precisely when its associated residue gerbe is neutral; see \cite{Bresciani_Vistoli_2024}. Numerous examples are known of varieties or algebraic structures that fail to be defined over their fields of moduli; see, for example, \cite{bresciani-plane,Bresciani-Divisor,yang2026fieldsmodulismoothdel}.

\subsection{The main theorems}
It is therefore natural to seek an approach that is intrinsic to the gerbe and does not require a presentation as a classifying stack. The linear objects that remain available in this generality are vector bundles on the gerbe itself. Every $\cE\in\Vect(\cG)$ carries an action of geometric inertia on its
fibers. Following
\cite[Definition~3.0.4]{BraggLieblich}, define
\[
\rdim(\cG)=
\min\{\rk(\cE):\cE\in\Vect(\cG)
       \text{ is faithful on geometric inertia}\}
\]
and
\[
\rdim_p(\cG)=
\min_{\substack{L/k\ {\rm finite}\\p\nmid[L:k]}}\rdim(\cG_L).
\]
It is appropriate to remark that, the use of $\Vect(\cG)$ is part of a broader program of understanding
gerbes through their vector bundles.  In earlier joint work with
G.~Bresciani, we studied neutrality through $\Vect(\cG)$: vector bundles of
prescribed geometric type were used to determine whether $\cG$ is neutral,
leading in particular to a classification of so called \emph{neutral
representations} in dimension at most three
\cite{BrescianiYangNeutral}.  The present paper develops a complementary
quantitative question: how small can a vector bundle be while still
faithfully detecting the geometric inertia, and how is this minimum related
to essential dimension?

The first main theorem gives a complete prime local answer.

\begin{theo}\label{thm:intro-main}
Let $\operatorname{char}(k)\ne p$, and let $\cG/k$ be a finite gerbe
whose geometric inertia groups are $p$-groups.  If $k^{(p)}$ is a
$p$-closure of $k$, then
\[
\ed_k(\cG;p)
=\rdim_p(\cG)
=\rdim(\cG_{k^{(p)}}).
\]
If $k$ is $p$-special, then
\[
\ed_k(\cG)=\ed_k(\cG;p)=\rdim(\cG).
\]
More generally, the same absolute equality holds if
$\mu_p\subset k$ and the band of $\cG$ becomes constant over a finite
Galois extension of $p$-power degree.
\end{theo}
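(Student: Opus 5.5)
The plan is to prove the two inequalities $\ed_k(\cG;p)\le\rdim_p(\cG)$ and $\rdim(\cG_{k^{(p)}})\le\ed_k(\cG;p)$. Together with the easy comparison $\rdim_p(\cG)=\rdim(\cG_{k^{(p)}})$, these give the first chain of equalities.

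\emph{Step 1: the $p$-closure comparison.} A faithful bundle on $\cG_{k^{(p)}}$ has finite presentation, so it descends to a finite subextension $L$ of $k^{(p)}/k$. Such an $L$ has degree prime to $p$, and faithfulness on geometric inertia is geometric, so it is preserved. Conversely, every prime-to-$p$ extension $L$ embeds into some $p$-closure, and all $p$-closures are conjugate. Hence $\rdim(\cG_{k^{(p)}})\le\rdim(\cG_L)$, and the two minima agree.

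\emph{Step 2: the upper bound.} Since $\ed_k(\cG;p)$ is insensitive to prime-to-$p$ base change, it suffices to show $\ed_L(\cG_L)\le\rdim(\cG_L)$. Let $\cE$ be faithful of rank $n$ and take its frame bundle $P\to\cG$, a $\GL_n$-torsor. Because $\cE$ is faithful on inertia, $P$ is an algebraic space. Moreover $\cG\simeq[P/\GL_n]$, so $\cG\to B\GL_n$ is representable. Since $B\GL_n$ has essential dimension $0$ (Hilbert 90), the standard fibration inequality for representable morphisms gives $\ed\cG\le\dim P-\dim\GL_n=\dim\cG+n$. As $\dim\cG=0$, this yields $\ed\le n$. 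More concretely, $[\mathbb{V}(\cE)/\cG]$ is a generically free linear ``representation,'' and the usual argument yields $\ed\le\rk\cE$.

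\emph{Step 3: the lower bound, which is the main obstacle.} I would run the Karpenko–Merkurjev argument over $K=k^{(p)}$, which is $p$-special. By Step 1, and because prime-to-$p$ extensions do not change $\ed(-;p)$, we may replace $k$ by $K$. Over a $p$-special field the gerbe $\cG$ has a $K$-point only after passing to a $p$-power extension, so it need not be neutral. This is where the classical proof uses the center.

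First I would construct a ``generic object'' $\xi\in\cG(F)$ with $F=K(X)$, where $X$ is a classifying-type space obtained from a faithful bundle. Next let $C$ be the $p$-torsion of the center of the band. The center of a band is represented by a group scheme $Z$ over $K$, and I would take $C\subseteq Z[p]$ to be the maximal diagonalizable subgroup, which is $K$-split after $p$-special base change. The characters $\chi$ of $C$ then give Brauer classes $\beta_\chi=\chi_*[\cG\to\text{rigidification}]$, living over the $C$-rigidification $\cG\!\!\fatslash C$, a gerbe with smaller inertia.

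Then I would apply Karpenko–Merkurjev's incompressibility theorem for products of Severi–Brauer varieties. This gives $\ed(\cG;p)\ge \ed(\cG\!\!\fatslash C\text{-generic};p)+\min\sum_i\ind(\beta_{\chi_i})$, where the minimum runs over bases $\chi_i$ of $C^\vee$. The last step is to show that this minimum equals the minimal rank of a faithful bundle. Each character $\chi$ yields twisted sheaves, i.e.\ bundles on $\cG$ where $C$ acts by $\chi$. Over a $p$-special field, $\ind(\beta_\chi)$ is the minimal rank of such a bundle, and the bundles realizing this minimum are irreducible with $C$ acting by scalars. As in the group case, summing minimal-rank bundles $\cE_{\chi_i}$ over a basis gives a bundle faithful on $C$, hence faithful on all of inertia, since every nontrivial normal subgroup of a $p$-group meets the center. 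The delicate point is proving the reverse inequality, which requires adapting the KM lemma: a faithful bundle decomposes into irreducibles whose central characters span $C^\vee$. Here I must use decomposition in $\Vect(\cG)$ over the $p$-special field, replacing representation theory of a group by Morita theory of the endomorphism algebras of $\cG$-bundles.

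\emph{Remaining statements.} If $k$ is $p$-special, then $k=k^{(p)}$. Since $\ed\ge\ed(-;p)$ holds always, Step 2 gives $\ed\le\rdim$, which closes the chain. In the final case ($\mu_p\subset k$ and a $p$-power splitting extension), I would follow LMMR: show that $\rdim$ does not drop under prime-to-$p$ extensions. This holds because the relevant Brauer indices and Galois orbits of central characters are $p$-power phenomena, unaffected by prime-to-$p$ extensions. Hence $\rdim(\cG)=\rdim_p(\cG)$, and the same sandwich applies.
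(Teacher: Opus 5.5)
Your lower-bound architecture matches the paper's: pass to $k^{(p)}$, take the generic frame object $\eta$ of the rigidification $\cG\sslash\fC$ over $K=k(U)$, apply Karpenko--Merkurjev to the $(\mu_p)^c$-fiber gerbe $\cX_\eta$, then match Brauer indices with bundle ranks. But the step carrying the new content is only asserted, not proved. For the lower bound you need $m_\chi(\cG)\le\ind\beta_\eta(\chi)$. That is, you need a bundle on $\cG$, defined over the base field and of pure central character $\chi$, whose rank equals the index of a Brauer class living over the generic field $K$. Nothing about $p$-specialness gives this. Pulling $\chi$-bundles back to $\cX_\eta$ only yields $\ind\beta_\eta(\chi)\mid\rk\cE$, hence $\ed_K(\cX_\eta;p)\le\rdim(\cG)$, which is the wrong direction. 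Karpenko--Merkurjev get the hard direction from a $K$-theoretic surjection from representations of a global group onto twisted modules at the generic torsor. For a non-neutral gerbe whose band is not represented, no such group exists. The paper's replacement is the non-neutral index theorem, which has two parts:
\begin{enumerate}
\item $G_0^\chi(\cG)\to G_0^\chi(\cX_\eta)$ is surjective, by homotopy invariance on $\Tot(q^*\cV)$, localization to the open frame locus, and spreading out from the generic point of $U$. This gives $d_\chi(\cG)=\ind\beta_\eta(\chi)$.
\item A separate lemma shows that all simple objects of $\Vect(\cG)$ have $p$-power rank, via neutralization over a $p$-power splitting field plus semisimple descent. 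This gives $m_\chi=d_\chi$.
\end{enumerate}
By contrast, the step you call delicate (that a faithful bundle has central characters spanning $\fC^*$) is formal from the canonical $\fC$-character decomposition.

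Three further points need correcting.
\begin{enumerate}
\item As written, your displayed lower bound adds the essential $p$-dimension of the generic object of $\cG\sslash C$ as an extra summand. This is false. For $\mathbb Z/p^2$ over $\mathbb Q^{(p)}$, which contains $\mu_p$ but not $\mu_{p^2}$, it would give $\ed\ge p+1$, whereas the true value is $p$. What you need is only $\ed_k(\cG;p)\ge\ed_K(\cX_\eta;p)$, which follows from the fiber and base-change lemmas.
\item ``Every nontrivial normal subgroup of a $p$-group meets the center'' does not show that faithfulness on $C$ forces faithfulness, because $C$ is only the split part of $Z[p]$. You must also use that the kernel subband $N$ is stable under the $p$-group descent action. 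Then the nonzero $\mathbb F_p$-module $N\cap Z(P)[p]$ has a nonzero invariant vector, and that vector lies in $C$.
\item In Step~2 the frame-bundle count is wrong: $\dim P-\dim\GL_n=\dim\cG=0$, and the fibration bound for $\cG\to B\GL_n$ gives only $n^2$. Your fallback, using the free inertia locus in $\Tot(\cE)$ and lifting objects over infinite fields, is the correct argument.
\end{enumerate}
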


The prime local formulation is necessary: over a general field, an
unconditional equality with $\rdim(\cG)$ would be false even for neutral
gerbes; see Remark~\ref{rem:localization-necessary}.

Viewed against the results recalled above, Theorem~\ref{thm:intro-main} completes the remaining finite $p$-gerbe cases. The mechanism behind this result is a non-neutral index theorem. Let $\cA/k$ be a finite
gerbe whose geometric inertia groups have order invertible in $k$, without
assuming that they have $p$-power order, let $\fC$ be a split finite
diagonalizable central subband, and put $\cH=\cA\sslash\fC$, the rigidification of $\cA$ by $\fC$. For
$\chi\in\fC^*$, let $I_\chi(\cA)$ be the ideal generated by the ranks of
vector bundles of pure central character $\chi$.  Let
$\eta:\Spec K\to\cH$ be the generic frame object constructed in
Lemma~\ref{lem:generic-object-rigidification}; its fiber has pushout along
$\chi$ defining a Brauer class $\beta_\eta(\chi)\in\Br(K)$.

\begin{theo}[Non-neutral index theorem]\label{thm:intro-index}
In the preceding notation,
\[
I_\chi(\cA)=\ind\beta_\eta(\chi)\mathbb Z
\quad\text{for every }\chi\in\fC^*.
\]
\end{theo}

The proof of Theorem~\ref{thm:intro-main} follows a strategy that blends
the argument of Karpenko--Merkurjev with the split central subgroup method
of L\"otscher--MacDonald--Meyer--Reichstein, modified as necessary to deal
with non-neutral gerbes.  Concretely, the computation for the
$(\mu_p)^c$-gerbe is taken from \cite[Theorems~2.1 and~3.1]{KM}, whereas the
split central subband and normal subband arguments are gerbe theoretic
analogues of the corresponding constructions in \cite{LMMRTori}.  The main
new feature is that these steps have to be performed at the level of a
band, instead of a global group scheme, and then assembled using
Theorem~\ref{thm:intro-index}.

\subsection{Relative theorem and applications}

Theorem~\ref{thm:intro-main} has a relative form.  If
$f:\cX\to\cY$ is a locally full morphism of finite $p$-gerbes, write
$I_{\cX/\cY}$ for its relative inertia and define
\[
\rdim(f)=\min\{\rk(\cE):
I_{\cX/\cY}\to\GL(\cE)\text{ is faithful}\}.
\]
Define $\rdim_p(f)$ after prime-to-$p$ localization, and let
\[
\ed_k(f;p)=
\sup_{K/k,\,y\in\cY(K)}
\ed_K(\cX_y;p).
\]
\begin{theo}
    With the notations as above, we have
\[
\ed_k(f;p)=\rdim_p(f).
\]
\end{theo}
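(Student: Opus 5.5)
We prove the two inequalities $\ed_k(f;p)\le\rdim_p(f)$ and $\ed_k(f;p)\ge\rdim_p(f)$ separately, reducing each fiberwise to Theorem~\ref{thm:intro-main}. Since $f$ is locally full, for every field $K/k$ and $y\in\cY(K)$ the fiber $\cX_y=\cX\times_{\cY,y}\Spec K$ is a finite gerbe over $K$. Its geometric inertia is the restriction of the relative inertia $I_{\cX/\cY}$, so it is a $p$-group. Theorem~\ref{thm:intro-main} therefore gives
\[
\ed_K(\cX_y;p)=\rdim_p(\cX_y),
\]
and the relative statement becomes the claim $\sup_y\rdim_p(\cX_y)=\rdim_p(f)$.

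\emph{Upper bound.} Choose a finite extension $L/k$ of degree prime to $p$ and a vector bundle $\cE$ on $\cX_L$ of rank $\rdim_p(f)$ on which $I_{\cX/\cY}$ acts faithfully. For $y\in\cY(K)$, put $K'=K\otimes_kL$; it is a product of fields, one of which has degree prime to $p$ over $K$. Restricting $\cE$ to $\cX_{y,K'}$ gives a vector bundle whose geometric inertia action is the restriction of the faithful relative inertia action, so it is faithful. Hence $\rdim_p(\cX_y)\le\rdim_p(f)$ for every $y$, and taking the supremum gives $\ed_k(f;p)\le\rdim_p(f)$.

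\emph{Lower bound.} This is the step where the global object has to be reconstructed from the fibers. We take a generic object of $\cY$: since $\cY$ is a finite gerbe, there is a versal object $\eta:\Spec K\to\cY$, for instance the generic point of a vector bundle torsor, as in Lemma~\ref{lem:generic-object-rigidification}. We then need
\[
\rdim_p(\cX_\eta)\ge\rdim_p(f).
\]
The plan has three steps.
\begin{enumerate}
\item Pass to a $p$-closure of $k$, and correspondingly of $K$, so that the prime-to-$p$ localizations disappear. This uses $\rdim_p=\rdim$ over $p$-closures, which is the last equality in Theorem~\ref{thm:intro-main}.
\item Take a faithful bundle $\cE_\eta$ on $\cX_\eta$ of minimal rank. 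Spread it out to a bundle on $\cX\times_\cY U$ for an open $U$ of a versal torsor $V\to\cY$.
\item Descend it to $\cX$. By $\GL$-equivariance, or by averaging over the generic torsor, the class of $\cE_\eta$ extends to a bundle on $\cX$ of the same rank, and this extension is faithful on $I_{\cX/\cY}$.
\end{enumerate}

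Step (3) is the main obstacle, because a bundle on the generic fiber need not extend to $\cX$ in general. Our first attempt would be to decompose $\cE_\eta$ into pieces with pure central character, with respect to a split central subband of the relative inertia. Then we would use the non-neutral index theorem, Theorem~\ref{thm:intro-index}, to compare the minimal ranks on $\cX_\eta$ and on $\cX$. Since the fiber over the generic object has Brauer obstructions equal to the generic ones, the index ideals coincide. This is exactly the argument in the proof of Theorem~\ref{thm:intro-main}, now run relative to $\cY$.

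Once the two index ideals are identified, the ranks of minimal faithful bundles on $\cX_\eta$ and on $\cX$ agree after $p$-localization. This gives $\ed_K(\cX_\eta;p)\ge\rdim_p(f)$, and hence $\ed_k(f;p)\ge\rdim_p(f)$.
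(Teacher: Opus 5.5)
Your upper bound is correct and is the paper's argument. The lower bound, however, is not proved.

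\textbf{What fails.} Steps (2)--(3) fail as stated. A minimal faithful bundle on $\cX_\eta$ need not extend to a bundle of the same rank on $\cX$; at best its class lifts to $G_0(\cX)$ as a virtual class. You note this yourself. What replaces those steps is the single assertion that the fiber over the generic object ``has Brauer obstructions equal to the generic ones, so the index ideals coincide.'' The Brauer classes of Theorem~\ref{thm:intro-index} are attached to the generic frame object of a rigidification. For $\cX\sslash\fC_f$ over $k$ and for $\cX_\eta\sslash\fC_{f,K}$ over $K$, these classes live over different fields, and you give no mechanism comparing them. Nor do you explain why they are unchanged when you pass to a $p$-closure of $K$, which is no longer a function field over $k$. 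This comparison is the whole content of the lower bound, not a formality. By Example~\ref{ex:cyclic-relative-rank}, special fibers can have strictly smaller essential $p$-dimension. Moreover, the claim that a versal object of $\cY$ attains the supremum is stronger than what the paper proves for general $f$; the paper proves it only for $BP'\to BP$, in Proposition~\ref{pro:generic-embedding-problem}.

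\textbf{How the paper avoids it, and how to repair your route.} The paper never compares a fiber with $\cX$. Working over $F=k^{(p)}$, it rigidifies along the relative split central subband, $q:\cX\to\cH=\cX\sslash\fC_f$, and factors $f\simeq hq$. It then takes the generic frame object $\eta$ of $\cH$, not of $\cY$. Theorem~\ref{thm:nonneutral-central-index} now applies to $\cX$ itself, giving $\ind\beta_\eta(\chi)=d_\chi(\cX)=m_\chi(\cX)$. Theorem~\ref{thm:km-elementary-computation} and Proposition~\ref{prop:relative-faithfulness-criterion} then give $\ed_K(\cA_\eta;p)=\rdim(f)$ for the $\fC_f$-gerbe $\cA_\eta=\cX\times_{\cH,\eta}\Spec K$. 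Since $\cA_\eta$ is the fiber of $\cX_y\to\cH_y$ over $\eta$, with $y=h(\eta)$, Lemma~\ref{lem:finite-fiber-ed} yields $\ed_K(\cX_y;p)\ge\rdim(f)$.

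If you want to keep your versal object (over $k^{(p)}$), the missing step can be supplied as in \eqref{eq:generic-lifting-g0}. Let $\mathcal F$ be the faithful rank-$s$ bundle on $\cY$ defining that object. Homotopy invariance, localization and spreading out over $\cX\times_{\cY}\Tot(\mathcal F^{\oplus s})$ give a surjection $G_0^\chi(\cX)\to G_0^\chi(\cX_\eta)$ for $\chi\in\fC_f^*$, hence $d_\chi(\cX_\eta)=d_\chi(\cX)$. Then apply Theorem~\ref{thm:central-rank-lower-bound} to $\cX_\eta$ over $K$ with $\fC_{f,K}$; no $p$-closure of $K$ is needed. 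Combined with $d_\chi(\cX)=m_\chi(\cX)$ and \eqref{eq:relative-character-formula}, this gives $\ed_K(\cX_\eta;p)\ge\rdim_p(f)$. That would in fact prove the stronger statement that the versal object of $\cY$ attains the supremum.
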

After passage to a $p$-closure, the common value is realized by an actual
fiber whose ordinary essential dimension, essential $p$-dimension, and
faithful rank all coincide.

For a surjection of finite $p$-groups $P'\to P$ with kernel $N$, this says
that the largest essential dimension of a lifting gerbe for a $P$-torsor is
the least dimension of a $P'$-representation whose restriction to $N$ is
faithful; the generic $P$-torsor obtained from a free open subset of a
representation attains the maximum.  This answers, for finite
$p$-groups, a question raised by L\"otscher for arbitrary normal subgroups
\cite[Remark~4.4]{LotscherFiber}.

A common mechanism underlies both the absolute and relative results.
Central characters and Brauer indices provide a way to pass from the
geometry of a non-neutral gerbe to concrete rank constraints on vector
bundles.  In the absolute case these constraints compute essential
$p$-dimension, while in the relative case they compute the largest
essential $p$-dimension among the fibers.  This framework is intrinsic to
the gerbe and continues to apply when there is no global group scheme as its band.

\subsection{Quotient compression dimension}
We conclude with a geometric interpretation.  For a finite gerbe
$\cG/k$ whose geometric inertia has order invertible in $k$, we introduce the quotient compression dimension $\operatorname{qcdim}_k(\cG)$,
the least dimension of a tame quotient singularity whose fundamental gerbe
is $\cG$.  It is equal to the minimum rank of a faithful vector bundle on
$\cG$ with no pseudoreflections.  In particular, for finite $p$-gerbes,
Corollary~\ref{thm:qcdim-ed} gives the following geometric consequence of
Theorem~\ref{thm:intro-main}:
\[
\ed_k(\cG;p)
\le
\operatorname{qcdim}_p(\cG)
\le
\ed_k(\cG;p)+1.
\]
Thus, after prime-to-$p$ localization, the essential dimension determines
up to one the smallest dimension of a tame quotient singularity retaining
$\cG$ as its fundamental gerbe.

\subsection{Structure of the paper}

Section~\ref{sec:essential-dimension-background} recalls the necessary
background on essential dimension.  Section~\ref{sec:bands} fixes the
descent formalism for bands, subbands, and kernel subbands.  In
Section~\ref{sec:central-index} we prove the non-neutral index theorem.
Section~\ref{sec:gerby-km} constructs the canonical split central subband,
proves attainment of the relevant rank ideals, and establishes the main
theorem under $p$-primary descent.  Section~\ref{sec:p-localization}
removes the $p$-primary descent hypothesis by prime-to-$p$ localization.
Section~\ref{sec:relative-rank} develops relative faithful rank and applies
the relative theorem to lifting problems.  Finally,
Section~\ref{sec:compression-singularities} introduces quotient compression
dimension.

\section{Essential dimension and faithful rank}
\label{sec:essential-dimension-background}

The material in this section is standard, and no new result is claimed.
We follow \cite[Sections~1 and~2]{BerhuyFavi} for the functorial
definition and its specialization to algebraic groups,
\cite[Sections~2 and~4]{BRV} for algebraic stacks and gerbes, and
\cite[Sections~1 and~3]{LMMRTori} for essential dimension at a prime and
$p$-closures.

\subsection{Fields of definition}

Let $\mathcal X$ be a category fibered in groupoids over $k$, and let
$K/k$ be a field extension.  An object $\xi\in\mathcal X(K)$ is said to \emph{descend} to an intermediate field $k\subset K_0\subset K$ if there exists an object $\xi_0\in\mathcal X(K_0)$ together with an isomorphism
\[
\xi_0|_K\simeq\xi.
\]
The essential dimension of $\xi$ over $k$ is
\[
\ed_k(\xi)=
\min_{\xi\text{ descends to }K_0}\trdeg_k(K_0).
\]
The essential dimension of $\mathcal X$ is obtained by taking the
supremum over all fields and all objects:
\[
\ed_k(\mathcal X)=
\sup_{\substack{K/k\\ \xi\in\mathcal X(K)}}\ed_k(\xi).
\]
Equivalently, one may apply the functorial definition to the functor
sending $K$ to the set of isomorphism classes in $\mathcal X(K)$.
For algebraic stacks that are locally of finite presentation, each individual
object can be defined over some finitely generated field extension, even though
the supremum taken over all such objects may still be infinite.  By contrast,
when $X$ is an algebraic space of finite type over $k$, we have
$\ed_k(X)=\dim(X)$ \cite[Example~2.4]{BRV}.  In particular,
$\ed_k(\bP^n)=n$.

For a prime $p$, one is allowed first to make a finite extension of
degree prime to $p$.  Thus
\[
\ed_k(\xi;p)=
\min_{\substack{K'/K\ {\rm finite}\\p\nmid[K':K]}}
\ed_k(\xi_{K'}),
\quad
\ed_k(\mathcal X;p)=
\sup_{\substack{K/k\\ \xi\in\mathcal X(K)}}\ed_k(\xi;p).
\]
This invariant ignores phenomena which disappear after prime-to-$p$
extensions.  It always satisfies
\[
\ed_k(\mathcal X;p)\le\ed_k(\mathcal X).
\]

\subsection{Groups and classifying stacks}

Let $\fG$ be an affine group scheme over $k$.  The groupoid
$B\fG(K)$ is the groupoid of $\fG_K$-torsors over $\Spec K$;
consequently its set of isomorphism classes is $\H^1(K,\fG)$ in the
fppf topology.  By definition,
\[
\ed_k(\fG)=\ed_k(B\fG),
\quad
\ed_k(\fG;p)=\ed_k(B\fG;p).
\]
Thus essential dimension of a group scheme measures the number of
parameters needed to define all of its torsors.  If $V$ is a generically
free linear representation of $\fG$, then
\[
\ed_k(B\fG)\le\dim(V)-\dim(\fG),
\]
see \cite[Lemma~4.11]{BerhuyFavi}.  In particular, if $\fG$ is finite constant
and $V$ is faithful, then the action on $V$ is generically free, and hence
\[
\ed_k(B\fG)\le\dim(V).
\]
More precisely, the Karpenko--Merkurjev theorem states that if $P$ is a
finite constant $p$-group and $\mu_p\subset k$, then
\[
\begin{aligned}
\ed_k(BP)&=\ed_k(BP;p)\\
&=\min\{\rk(V): V\text{ is a faithful $k$-representation of }P\};
\end{aligned}
\]
see \cite[Theorem~4.1]{KM}. Thus the preceding
upper bound becomes an equality for a faithful representation of minimal
rank.

\subsection{Gerbes}

A gerbe $\cG/k$ is a stack that is locally nonempty, and any two of its objects are
locally isomorphic.  If it has a $k$-rational point $x\in\cG(k)$, then it is called
\emph{neutral} and
\[
\cG\simeq B\underline{\Aut}_k(x).
\]
Its essential dimension is then the essential dimension of this
classifying stack.  A non-neutral gerbe has no such global presentation,
but the definition of $\ed_k(\cG)$ is unchanged: it is the supremum of
the essential dimensions of objects $\xi\in\cG(K)$ over all extensions
$K/k$.  Both the field needed to obtain an object and the parameters
needed to define its twisted forms are part of this invariant.

For the finite gerbes used in this paper, a vector bundle is called
\emph{faithful} if its inertia action on every geometric fiber is faithful.
We make the following definition; see \cite[Definition~3.0.4]{BraggLieblich}.

\begin{defi} Let $\cG/k$ be a finite gerbe and $\Vect(\cG)$ the category of vector bundles over $\cG$. Define
    \[
\rdim(\cG)=
\min\{\rk(\cE):\cE\in\Vect(\cG)
       \text{ is faithful}\}
\]
and
\[
\rdim_p(\cG)=
\min_{\substack{L/k\ {\rm finite}\\p\nmid[L:k]}}\rdim(\cG_L).
\]
\end{defi}

After passing to an algebraic closure, a faithful inertia
representation is therefore a faithful representation of a finite constant
group.  The complement of its free locus is the finite union of the fixed
subspaces of the nonidentity elements, so the free locus is nonempty and
open.  Applied fiberwise to a faithful vector bundle, this gives
\[
\ed_k(\cG)\le\rdim(\cG),
\]
as proved below in Proposition~\ref{prop:ed-upper-main}.  The difficulty is
the reverse inequality.  In the neutral case it is a problem about
representations of a global group scheme.  In the non-neutral case only the
band, with its outer descent, is globally available; the gerbe class
contributes an additional twisting obstruction.

Finally, a $p$-closure $k^{(p)}$ is a $p$-special algebraic extension
obtained, up to the usual purely inseparable completion, from a Sylow
pro-$p$ subgroup of the absolute Galois group.  It is a filtered union
of finite extensions of $k$ of degree prime to $p$, and prime-local
essential dimension over $k$ agrees with essential dimension at $p$ after
base change to $k^{(p)}$; see Lemma~\ref{lem:p-special-ed-invariance} below.

\section{Bands}\label{sec:bands}

We recall the part of the standard theory of bands and outer descent
needed below.  Our conventions follow Giraud
\cite[Chapter~IV, Sections~1.1 and~2.2]{Giraud}; for a concise modern
treatment in the setting of finite \'etale gerbes, see
\cite[Section~3.1]{JavanpeykarLoughran}.

Most of the material in this section is standard, and we record it mainly to
fix conventions used later.  The kernel subband of
Lemma~\ref{lem:kernel-subband} is new here, but its construction is an
elementary consequence of outer descent.

\subsection{The stack of bands}

Let $\mathcal C$ be a site.  If $U\in\mathcal C$, let $\fG$ and
$\fH$ be sheaves of groups on the localized site $\mathcal C/U$.
For a section $h$ of $\fH$, write $\operatorname{Int}(h)$ for the
inner automorphism $x\mapsto hxh^{-1}$.
Two homomorphisms $f,g:\fG\to\fH$ are called
\emph{locally conjugate} if, after passing to a covering
$\{U_a\to U\}$, there are sections $h_a\in\fH(U_a)$ such that
\[
f|_{U_a}=\operatorname{Int}(h_a)\circ g|_{U_a}.
\]
The sheaf of \emph{outer homomorphisms} is defined by
\[
\underline{\Hom}_{\mathrm{out}}(\fG,\fH)
:=
\bigl(
\underline{\Hom}_{\mathrm{grp}}(\fG,\fH)/\fH
\bigr)^{\#}.
\]
So an outer homomorphism $\fG\to\fH$ is a locally defined homomorphism modulo local
conjugacy.

Composition is well defined.  Indeed, if
$f,f':\fG\to\fH$ and $q,q':\fH\to\fK$ satisfy
$f'=\operatorname{Int}(h)\circ f$ and
$q'=\operatorname{Int}(k)\circ q$ for local sections $h$ of $\fH$
and $k$ of $\fK$, then
\[
q'\circ f'
=
\operatorname{Int}\bigl(kq(h)\bigr)\circ q\circ f.
\]
Thus group sheaves on $\mathcal C/U$, with outer homomorphisms as
morphisms, form a category.  Restriction along $V\to U$ makes these
categories into a fibered category
$\operatorname{PreBand}_{\mathcal C}\to\mathcal C$.  Because the
outer-Hom sets were sheafified, morphisms already satisfy descent; objects
need not.

\begin{defi}[Giraud's stack of bands]\label{def:stack-of-bands}
The \emph{stack of bands} on $\mathcal C$, denoted
$\operatorname{Band}_{\mathcal C}$, is the stackification of
$\operatorname{PreBand}_{\mathcal C}$.  A \emph{band over $U$} is an
object of the fiber category $\operatorname{Band}_{\mathcal C}(U)$.
This is Giraud's definition; see
\cite[Chapter~IV, D\'efinition~1.1.6]{Giraud}.
\end{defi}

In other words, a band over $U$ can be presented by
\begin{enumerate}[label=\textup{(\roman*)}]
\item a covering $\{U_i\to U\}$;
\item a group sheaf $\fG_i$ on each $\mathcal C/U_i$; and
\item outer isomorphisms
\[
\overline\lambda_{ij}:
\fG_j|_{U_{ij}}\xrightarrow{\sim}\fG_i|_{U_{ij}}
\]
which satisfy
$\overline\lambda_{ij}\overline\lambda_{jk}
=\overline\lambda_{ik}$ on triple overlaps.
\end{enumerate}
Choosing actual representatives $\lambda_{ij}$ of the
outer isomorphisms changes the displayed cocycle condition into
\[
\lambda_{ij}\lambda_{jk}
=\operatorname{Int}(g_{ijk})\lambda_{ik}
\]
for suitable local sections $g_{ijk}\in\fG_i$.  Thus a band is a group
sheaf glued only up to inner automorphism.  Stackification is necessary
precisely because such outer descent data need not come from a global
group sheaf.

In this paper $\mathcal C$ is the fppf site of a scheme $S$, which has
final object $S$; by a band over $S$ we mean an object of
$\operatorname{Band}_{\mathcal C}(S)$.

\begin{rema}
If all $\fG_i$ are abelian, inner automorphisms disappear and ordinary
descent is recovered. So an abelian band is an abelian group scheme \cite[Chapter~IV, Proposition~1.2.3]{Giraud}.
\end{rema}

\subsection{The band of a gerbe}

Let $\cG\to S$ be a gerbe for the fppf topology.  Choose a covering
$\{U_i\to S\}$ and objects $x_i\in\cG(U_i)$.  After refining the
covering on the overlaps, choose isomorphisms
\[
\phi_{ij}:x_j|_{U_{ij}}\ra x_i|_{U_{ij}},
\quad U_{ij}=U_i\times_S U_j,
\]
and put
\[
\fG_i=\underline{\Aut}_{U_i}(x_i).
\]
Conjugation by $\phi_{ij}$ gives
\[
\lambda_{ij}:\fG_j|_{U_{ij}}\ra \fG_i|_{U_{ij}},
\quad
h\longmapsto\phi_{ij}h\phi_{ij}^{-1}.
\]
On a triple overlap, define
\[
g_{ijk}=\phi_{ij}\phi_{jk}\phi_{ik}^{-1}\in\fG_i(U_{ijk}).
\]
Then
\begin{equation}\label{eq:band-outer-cocycle}
\lambda_{ij}\lambda_{jk}
=\operatorname{Int}(g_{ijk})\lambda_{ik}.
\end{equation}
If $\phi_{ij}$ is replaced by $a_{ij}\phi_{ij}$, where $a_{ij}\in \fG_i(U_{ij})$, then
$\lambda_{ij}$ is replaced by
$\operatorname{Int}(a_{ij})\lambda_{ij}$.  Thus the $\fG_i$ do not in
general descend as a sheaf of groups: their transition maps satisfy the
cocycle condition only modulo inner automorphisms.

\begin{defi}\label{def:band}
The \emph{band} $\operatorname{Band}(\cG)$ of $\cG$ is the object of
$\operatorname{Band}_{\mathcal C}(S)$ represented by the local groups
$\fG_i$ and the outer transition isomorphisms defined above. If $\mathfrak l$ is a band over $S$, an
$\mathfrak l$-\emph{banding} of $\cG$ is an isomorphism
$\mathfrak l\xrightarrow{\sim}\operatorname{Band}(\cG)$.
\end{defi}

\begin{defi}\label{def:represented-band}
For a group scheme $\fG\to S$, write
\[
\operatorname{band}(\fG)=\operatorname{Band}(B\fG).
\]
A band $\mathfrak l$ over $S$ is \emph{represented by $\fG$} if there
is an isomorphism
$\mathfrak l\simeq\operatorname{band}(\fG)$.  If $S=\Spec k$ and
$\fG=P_k$ for an abstract finite group $P$, we say that
$\mathfrak l$ is \emph{constant}.  We say that an extension $K/k$
\emph{splits} a finite band if its base change to $K$ is constant.
\end{defi}

Representability of the band and neutrality of the gerbe must not be
confused.  If $x\in\cG(S)$, then
\[
\cG\simeq B\underline{\Aut}_S(x),
\]
so neutrality implies that the band is represented.  The converse is
false: a non-neutral gerbe may be banded by an honest group scheme $\fG$.

Although a band need not be represented, its center always is; see also
\cite[Part~II, Section~5.1]{Duskin}.

\begin{prop}\label{prop:center-of-band}
Let $\mathfrak l$ be a band over $S$. The centers of its local group
representatives descend canonically to a sheaf of abelian groups
$Z(\mathfrak l)$.  Over a field, if the local representatives are
finite group schemes, then $Z(\mathfrak l)$ is a finite commutative
group scheme.  It is \'etale whenever the geometric inertia has order
invertible in the field; in particular this holds for the finite
$p$-gerbes considered below when $\operatorname{char}(k)\ne p$.
\end{prop}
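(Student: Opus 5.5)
We prove Proposition~\ref{prop:center-of-band} in three steps: descend the local centers using that inner automorphisms act trivially on them, then read off finiteness and étaleness fiberwise.

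\textbf{Descent.} Present $\mathfrak l$ by a covering $\{U_i\to S\}$, group sheaves $\fG_i$ on $\mathcal C/U_i$, and representatives $\lambda_{ij}:\fG_j|_{U_{ij}}\to\fG_i|_{U_{ij}}$ of the outer transition isomorphisms. These satisfy
\[
\lambda_{ij}\lambda_{jk}=\operatorname{Int}(g_{ijk})\lambda_{ik}.
\]
Any group isomorphism carries center onto center. So $\lambda_{ij}$ restricts to an isomorphism
\[
\mu_{ij}:Z(\fG_j)|_{U_{ij}}\to Z(\fG_i)|_{U_{ij}}.
\]

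\emph{Independence of choices.} Another representative of the same outer isomorphism differs locally by $\operatorname{Int}(h)$, which is the identity on $Z(\fG_i)$. Hence $\mu_{ij}$ depends only on $\overline\lambda_{ij}$. Since $\operatorname{Hom}$ of sheaves is a sheaf, the local agreement glues, and $\mu_{ij}$ is a well-defined global isomorphism over $U_{ij}$.

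\emph{Cocycle condition.} Restricting the displayed identity to centers kills $\operatorname{Int}(g_{ijk})$, so $\mu_{ij}\mu_{jk}=\mu_{ik}$ holds on the nose.

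\emph{Gluing.} Sheaves satisfy fppf descent, so the $Z(\fG_i)$ glue to a sheaf of abelian groups $Z(\mathfrak l)$ on $S$.

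\emph{Canonicity.} Refining the covering or replacing the presentation by an isomorphic one in $\operatorname{Band}_{\mathcal C}(S)$ changes everything by outer isomorphisms. By the same triviality of inner automorphisms on centers, these induce compatible isomorphisms of the glued sheaves. Hence $Z(\mathfrak l)$ is canonical up to unique isomorphism, and $Z$ is a functor on the stack of bands restricted to isomorphisms.

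\textbf{Finiteness over a field.} Let $S=\Spec k$ with each $\fG_i$ a finite group scheme.

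\emph{Representability on the cover.} The center of an affine group scheme of finite type is a closed subgroup scheme, since it is the equalizer of the conjugation action and the trivial action. Hence each $Z(\fG_i)$ is a finite group scheme over $U_i$.

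\emph{Descent of representability.} After refining we may take the cover to be a single fppf cover $\Spec K\to\Spec k$. An fppf sheaf that becomes representable by a finite (hence affine) scheme after fppf base change is itself representable: descent for affine morphisms is effective. Finiteness then descends by fpqc descent of the property of being finite. Commutativity is clear from the local description.

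\textbf{Étaleness.} Suppose the geometric inertia order $n$ is invertible in $k$. Over $\bar k$ the band is represented by $\fG_{\bar k}$, a finite group scheme of order $n$. By Deligne's theorem the group scheme $\fG_{\bar k}$ is killed by $n$. Then, for example via Cartier's theorem that group schemes of order prime to the characteristic are étale, $\fG_{\bar k}$ is étale, i.e.\ a constant finite group.

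It follows that $Z(\mathfrak l)_{\bar k}$ is the constant group on the abstract center, and being étale descends along $\bar k/k$. For finite $p$-gerbes with $\operatorname{char}(k)\ne p$, the order is a power of $p$ and hence invertible, so the last clause follows.

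\textbf{Main obstacle.} The only delicate point is the independence-of-representatives step: one needs that $\mu_{ij}$ is globally well defined even though $\lambda_{ij}$ exists only locally. Everything else is formal descent.
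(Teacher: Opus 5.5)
Your proposal is correct and follows the paper's route. You restrict the transition isomorphisms to centers and note that the inner ambiguity in the representatives, as well as the defect $\operatorname{Int}(g_{ijk})$ in the cocycle, acts trivially there; you then descend and read off finiteness and \'etaleness from the local representatives, only in more detail than the paper. One small correction: Deligne's theorem concerns \emph{commutative} finite flat group schemes, so it does not apply as cited to the possibly noncommutative $\fG_{\bar k}$, but you do not need it, because a finite group scheme over a field of order prime to the characteristic is \'etale by the connected--\'etale sequence (infinitesimal group schemes have order a power of the characteristic), and a closed subgroup of an \'etale group is \'etale.
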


\begin{proof}
Every isomorphism $\fG_j\to\fG_i$ restricts to an isomorphism
$Z(\fG_j)\to Z(\fG_i)$.  The ambiguity in the transition maps is inner,
and inner automorphisms act trivially on the center.  Therefore
\eqref{eq:band-outer-cocycle} restricts to an ordinary cocycle on the
centers, which gives effective descent.  Over a field the descended
center is finite.  If the inertia order is invertible, the local
representatives are finite \'etale, and so is their center.
\end{proof}

\subsection{Subbands and inertia representations}
We first make precise the notion of a subband.

\begin{defi}\label{def:subband}
Let $\mathfrak h$ and $\mathfrak l$ be bands over $S$.  A
\emph{subband} $\mathfrak h\subset\mathfrak l$ is a morphism of bands
which, locally on $S$ and after choosing group representatives, is
induced by a monomorphism of group sheaves
\[
\fK_i\lhook\joinrel\ra\fG_i.
\]
Thus a subband is what is usually called an injective morphism of bands;
compare \cite[III,~3.3.3, p.~205]{Saavedra}.  It is a \emph{normal
subband} if every local image is normal in $\fG_i$, and a \emph{central
subband} if every local image is contained in $Z(\fG_i)$.
\end{defi}

\begin{lemm}
\label{lem:subband-descent}
Present $\mathfrak l$ on a cover $\{U_i\to S\}$ by group sheaves
$\fG_i$ and outer transition isomorphisms
\[
\overline\lambda_{ij}:\fG_j|_{U_{ij}}
\xrightarrow{\sim}\fG_i|_{U_{ij}}.
\]
For each $i$, let $\iota_i:\fK_i\hookrightarrow\fG_i$ be a
monomorphism.  These monomorphisms define a subband of $\mathfrak l$ if
and only if, after refining the cover, there are outer isomorphisms
\[
\overline\mu_{ij}:\fK_j|_{U_{ij}}
\xrightarrow{\sim}\fK_i|_{U_{ij}}
\]
such that
\begin{enumerate}[label=\textup{(\roman*)}]
\item
$\overline\mu_{ij}\overline\mu_{jk}=\overline\mu_{ik}$ on
$U_{ijk}$; and
\item
\[
\overline\iota_i\,\overline\mu_{ij}
=
\overline\lambda_{ij}\,\overline\iota_j
\quad\text{in }
\underline{\Hom}_{\mathrm{out}}(\fK_j,\fG_i)(U_{ij}).
\]
\end{enumerate}
\end{lemm}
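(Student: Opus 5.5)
The plan is to unwind Definition~\ref{def:subband} in terms of the stack $\operatorname{Band}_{\mathcal C}$. Viewed that way, the lemma says that a family of local monomorphisms glues to a morphism of bands exactly when the local sources glue to a band compatibly with the targets. The first step is to recall the meaning of a morphism of bands $\mathfrak h\to\mathfrak l$. Since $\operatorname{Band}_{\mathcal C}$ is the stackification of $\operatorname{PreBand}_{\mathcal C}$, whose Hom presheaves are already sheaves, morphisms between bands presented on a common cover are computed locally. Concretely, after refining so that $\mathfrak h$ and $\mathfrak l$ are presented on the same cover, a morphism of bands is a family of outer homomorphisms $\overline\iota_i:\fK_i\to\fG_i$ that commute with the transition outer isomorphisms of $\mathfrak h$ and $\mathfrak l$ on $U_{ij}$. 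This is just the description of morphisms in a stackification via descent data.

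For the ``only if'' direction, suppose the $\iota_i$ define a subband $\mathfrak h\subset\mathfrak l$.
\begin{enumerate}[label=\textup{(\alph*)}]
\item Present $\mathfrak h$ on a common refinement with local groups isomorphic to the $\fK_i$. This is possible because, by definition, the subband is locally induced by the $\iota_i$.
\item Let $\overline\mu_{ij}$ be the transition outer isomorphisms of $\mathfrak h$ in this presentation. Then (i) is the outer cocycle condition of that presentation.
\item Condition (ii) is the compatibility of the morphism $\mathfrak h\to\mathfrak l$ with the transition data on both sides.
\end{enumerate}

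For the ``if'' direction, we are given the $\overline\mu_{ij}$ satisfying (i) and (ii).
\begin{enumerate}[label=\textup{(\alph*)}]
\item The data $(\fK_i,\overline\mu_{ij})$ form a descent datum in $\operatorname{PreBand}$. By stackification it defines a band $\mathfrak h$ over $S$, together with identifications $\mathfrak h|_{U_i}\simeq\fK_i$.
\item By (ii), the local outer morphisms $\overline\iota_i$ are compatible with the gluing data. Since $\underline{\Hom}$ in the stack of bands is a sheaf, they glue to a morphism $\mathfrak h\to\mathfrak l$.
\item This morphism is locally induced by the monomorphisms $\iota_i$, so it is a subband by Definition~\ref{def:subband}.
\end{enumerate}

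The main obstacle is a subtlety in the ``only if'' direction. The subband's own local representatives need only be isomorphic to the $\fK_i$ via isomorphisms compatible with the inclusions up to conjugation in $\fG_i$, not in $\fK_i$. To handle this, replace the abstract local representatives of $\mathfrak h$ by the $\fK_i$ through the given identification; an outer isomorphism can be transported along such an identification. The point to verify is that conjugation by a section of $\fG_i$ is an ambiguity in $\underline{\Hom}_{\mathrm{out}}(\fK_j,\fG_i)$, and not in the transition maps $\overline\mu_{ij}$. Condition (ii) is stated in $\underline{\Hom}_{\mathrm{out}}(\fK_j,\fG_i)$ for exactly this reason, so the bookkeeping should close up with no further refinement of the cover.
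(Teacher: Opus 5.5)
Your proposal is correct and follows the paper's route. The paper's proof is the single observation that (i) is the descent condition gluing the $\fK_i$ to a source band, and (ii) is the descent condition gluing the $\overline\iota_i$ to a morphism into $\mathfrak l$. This is exactly what you unwind, using that $\operatorname{PreBand}_{\mathcal C}$ already has sheaf Hom's, so its stackification computes morphisms locally. Your remark that the local identifications match $\iota_i$ only up to $\fG_i$-conjugation is correct; it is why (ii) is stated in $\underline{\Hom}_{\mathrm{out}}(\fK_j,\fG_i)$, a point the paper leaves implicit.
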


\begin{proof}
Conditions \textup{(i)} and \textup{(ii)} are precisely the descent conditions for the source band and, respectively, for its morphism to $\mathfrak l$.
\end{proof}

Suppose that a descended normal subband
$\mathfrak h\subset\mathfrak l$ is represented locally by
$\fK_i\subset\fG_i$, and that a central subband
$\fC\subset Z(\mathfrak l)$ is represented locally by
$\fC_i\subset Z(\fG_i)$.  Then the intersections
$\fK_i\cap\fC_i$ define a subband $\mathfrak h\cap\fC$, independently of
the presentation.  Normality makes the $\fK_i$ stable under the
conjugacies in the outer descent data, while conjugation acts trivially on
the central $\fC_i$; hence the intersections inherit compatible transition
maps satisfying the two conditions of Lemma~\ref{lem:subband-descent}.

We now describe how vector bundles fit into the band formalism. Denote by
$\fI_{\cG}\to\cG$ the inertia group stack. Any vector bundle $\cE$ on
$\cG$ induces a canonical inertia representation
\[
\rho_{\cE}:\fI_{\cG}\ra\operatorname{GL}_{\cG}(\cE).
\]
More explicitly, for an object $x\in\cG(T)$, the functoriality of $\cE$ yields
\[
\rho_{\cE,x}:\underline{\Aut}_T(x)\ra
\operatorname{GL}_T(x^*\cE).
\]
Given an isomorphism $\phi:x\to y$, the induced map $x^*\cE\to y^*\cE$
conjugates $\rho_{\cE,x}$ to $\rho_{\cE,y}$, once we identify
$\underline{\Aut}_T(x)$ with $\underline{\Aut}_T(y)$ via
$g\mapsto\phi g\phi^{-1}$. Consequently, these local inertia
representations glue compatibly with the outer descent data that define
$\operatorname{Band}(\cG)$.

Equivalently, if $\cE$ has rank $n$, its classifying morphism
\[
c_{\cE}:\cG\ra B\operatorname{GL}_{n,S}
\]
induces a morphism of bands
\begin{equation}\label{eq:induced-band-morphism}
\overline\rho_{\cE}:
\operatorname{Band}(\cG)\ra
\operatorname{band}(\operatorname{GL}_{n,S}).
\end{equation}
Locally, $\overline\rho_{\cE}$ is represented by the homomorphisms
$\rho_{\cE,x}$ above.  We refer to
\eqref{eq:induced-band-morphism} simply as the morphism of bands induced
by $\cE$.  Since a band need not be a global group scheme, this morphism
need not be represented by a global homomorphism of group sheaves.

The preceding discussion also makes precise how a represented central
subband acts on vector bundles.

\begin{lemm}\label{lem:central-subband-inertia}
Let $\cG\to S$ be a gerbe with band $\band$, and let
$\fC\subset Z(\band)$ be a central subband represented by a group
scheme $\fC\to S$.  Then $\fC$ determines canonically a central
monomorphism
\[
\fC\times_S\cG\lhook\joinrel\ra\fI_{\cG}.
\]
For an object $x\in\cG(T)$, its pullback is the inclusion
$\fC_T\hookrightarrow Z(\underline{\Aut}_T(x))$ associated with the
band.  Consequently $\fC$ acts canonically on every vector bundle on
$\cG$.
\end{lemm}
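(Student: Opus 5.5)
The plan is to construct the monomorphism locally from the band presentation and check that it descends to $\cG$. We use that the inertia stack $\fI_{\cG}\to\cG$ is a sheaf of groups over $\cG$, so a morphism $\fC\times_S\cG\to\fI_{\cG}$ of group objects over $\cG$ amounts to the following data:
\begin{itemize}
\item for each object $x\in\cG(T)$, a homomorphism $\iota_x:\fC_T\to\underline{\Aut}_T(x)$;
\item compatibility with pullback along $T'\to T$;
\item compatibility with isomorphisms $\phi:x\to y$ in $\cG(T)$, namely $\iota_y=\operatorname{Int}(\phi)\circ\iota_x$.
\end{itemize}

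\emph{Step 1: define $\iota_x$.} The object $x$ gives a representative $\underline{\Aut}_T(x)$ of $\band|_T$. By Proposition~\ref{prop:center-of-band}, $Z(\band)|_T$ is identified canonically with $Z(\underline{\Aut}_T(x))$. We set $\iota_x$ to be the composite
\[
\fC_T\hookrightarrow Z(\band)_T\simeq Z(\underline{\Aut}_T(x))\subset\underline{\Aut}_T(x).
\]
This is a monomorphism with central image, since $\fC\subset Z(\band)$ is a subband.

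Canonicity of the identification needs care. For a second object $y$ over $T$, any two local isomorphisms $x\to y$ differ by an element of $\underline{\Aut}(x)$. That element acts trivially on the center. Hence the induced identifications $Z(\underline{\Aut}(x))\simeq Z(\underline{\Aut}(y))$ are independent of the choice. They agree with the descent isomorphisms defining $Z(\band)$, so they glue over a cover on which $x$ and $y$ become isomorphic.

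\emph{Step 2: compatibility.} Compatibility with pullback is immediate from the construction. Compatibility with an isomorphism $\phi:x\to y$ says that $\operatorname{Int}(\phi)$ restricted to centers equals the canonical identification from Step 1. This holds precisely because, by the previous paragraph, that identification is induced by any local isomorphism.

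Together these make $\iota$ a morphism of group sheaves over $\cG$. It is a monomorphism because it is one fiberwise, and its image is central in $\fI_{\cG}$ because it is central fiberwise. The pullback along $x$ is $\iota_x$ by construction.

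\emph{Step 3: the action on vector bundles.} For the final claim, compose $\iota$ with the inertia representation $\rho_{\cE}:\fI_{\cG}\to\GL_{\cG}(\cE)$. This gives $\fC\times_S\cG\to\GL(\cE)$, that is, a canonical action of $\fC$ on $\cE$. Because the image of $\fC$ is central, this action commutes with the whole inertia action.

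The main obstacle is the bookkeeping in Step 1: checking that the canonical identification of $Z(\band)$ with $Z(\underline{\Aut}_T(x))$ really is independent of the auxiliary cover and isomorphisms. This reduces to the observation that inner automorphisms act trivially on centers, already used in the proof of Proposition~\ref{prop:center-of-band}. Everything else is formal descent for morphisms of sheaves over $\cG$.
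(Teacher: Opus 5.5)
Your proposal is correct and follows the same route as the paper. Both define the map objectwise as $\fC_T\hookrightarrow Z(\band)_T\simeq Z(\underline{\Aut}_T(x))\subset\underline{\Aut}_T(x)$, note that this identification is canonical because any change of chosen isomorphism acts by an inner automorphism (trivial on the center), and then assemble these compatible inclusions into a central monomorphism into $\fI_{\cG}$ before composing with the inertia representation; your version simply spells out the pullback and isomorphism compatibilities that the paper compresses into ``functorial in $x$ and compatible with base change.''
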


\begin{proof}
For $x\in\cG(T)$, an identification of $\band_T$ with the band of
$\underline{\Aut}_T(x)$ identifies $Z(\band)_T$ with
$Z(\underline{\Aut}_T(x))$.  This identification is canonical: changing
the identification of the band changes the corresponding automorphism
group by an inner automorphism, and inner automorphisms act trivially on
the center.  The inclusions
$\fC_T\hookrightarrow Z(\underline{\Aut}_T(x))\hookrightarrow
\underline{\Aut}_T(x)$ are therefore functorial in $x$ and compatible
with base change.  They assemble to the asserted central monomorphism into
the inertia stack, and the inertia action then gives the asserted action on
every vector bundle.
\end{proof}

\begin{lemm}\label{lem:kernel-subband}
Let $\cE$ be a vector bundle on a gerbe $\cG$, and put
$\band=\operatorname{Band}(\cG)$.  The local kernels
\[
\fN_x=\ker(\rho_{\cE,x})\triangleleft\underline{\Aut}(x)
\]
determine a normal subband $\band_{\cE}\subset\band$.  Moreover, $\cE$ is
faithful on geometric inertia if and only if $\band_{\cE}=1$.  If
$\fC\subset Z(\band)$ is a central subband, then the kernel of the
restricted action of $\fC$ on $\cE$ is $\band_{\cE}\cap\fC$.
We call $\band_{\cE}=\ker(\overline\rho_{\cE})$ the kernel subband of $\cE$.
\end{lemm}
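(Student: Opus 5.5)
We plan to verify the descent conditions of Lemma~\ref{lem:subband-descent} directly for the local kernels; the remaining two claims are then local statements. Choose a covering $\{U_i\to S\}$, objects $x_i\in\cG(U_i)$ and isomorphisms $\phi_{ij}:x_j|_{U_{ij}}\to x_i|_{U_{ij}}$ as in the construction of $\operatorname{Band}(\cG)$, with $\fG_i=\underline{\Aut}(x_i)$ and $\lambda_{ij}=\operatorname{Int}(\phi_{ij})$. Put $\fN_i=\ker(\rho_{\cE,x_i})$. This is a normal subgroup sheaf of $\fG_i$, being the kernel of a homomorphism. The key input is the compatibility recorded just before \eqref{eq:induced-band-morphism}. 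The induced map $\psi_{ij}:x_j^*\cE\to x_i^*\cE$ on $U_{ij}$ satisfies
\[
\rho_{\cE,x_i}(\lambda_{ij}(h))=\psi_{ij}\,\rho_{\cE,x_j}(h)\,\psi_{ij}^{-1}.
\]
Hence $\lambda_{ij}$ carries $\fN_j|_{U_{ij}}$ isomorphically onto $\fN_i|_{U_{ij}}$. We take $\mu_{ij}=\lambda_{ij}|_{\fN_j}$ and let $\overline\mu_{ij}$ be its class as an outer isomorphism.

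Next we check the two conditions of Lemma~\ref{lem:subband-descent}. For (i), the relation \eqref{eq:band-outer-cocycle} gives $\mu_{ij}\mu_{jk}=\operatorname{Int}(g_{ijk})\mu_{ik}$ on $\fN_k$. Here $g_{ijk}\in\fG_i$, not necessarily in $\fN_i$, so this is the one point needing care. Conjugation by $g_{ijk}$ preserves $\fN_i$ by normality, but we must still make sense of it as an outer automorphism of $\fN_i$. The remedy is to view the kernel subband as a morphism of bands, as in Definition~\ref{def:subband}. In $\underline{\Hom}_{\mathrm{out}}(\fN_k,\fG_i)$ the ambiguity $\operatorname{Int}(g_{ijk})$ is inner in the target. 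Equivalently, we describe the source band through the objects of the gerbe: the band of $\cG$ restricted to the substack data $(x,\fN_x)$, with transition data $\lambda_{ij}|_{\fN_j}$ taken modulo conjugation by $\fG_i$. Since $\fN_i$ is normal, these classes satisfy the cocycle condition and descend by the stackification in Definition~\ref{def:stack-of-bands}. Condition (ii) holds by construction, because $\iota_i\mu_{ij}=\lambda_{ij}\iota_j$ on the nose. Independence of the choices follows as in Definition~\ref{def:band}: replacing $\phi_{ij}$ by $a_{ij}\phi_{ij}$ changes $\lambda_{ij}$ by $\operatorname{Int}(a_{ij})$, which again preserves the $\fN_i$. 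Replacing $x_i$ by an isomorphic object gives a conjugate kernel by the displayed compatibility.

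For faithfulness, note that $\band_{\cE}=1$ is a local condition: it holds if and only if each $\fN_i$ is trivial. Since $\cG$ is finite over a field, this can be checked on geometric points. For a geometric point $\bar x$, the sheaf $\fN_{\bar x}$ is exactly the kernel of the geometric inertia action on the fiber. So $\band_\cE=1$ if and only if every such action is faithful.

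For the last claim, let $\fC$ be represented locally by $\fC_i\subset Z(\fG_i)$. By Lemma~\ref{lem:central-subband-inertia}, the action of $\fC$ on $x_i^*\cE$ is $\rho_{\cE,x_i}|_{\fC_i}$. Its kernel is therefore $\fN_i\cap\fC_i$, and by the paragraph following Lemma~\ref{lem:subband-descent} these intersections glue to $\band_\cE\cap\fC$. Because $\fC$ is abelian, this subband is an honest subgroup scheme of $\fC$. The main obstacle is the first step: making the outer descent data for the $\fN_i$ precise despite $g_{ijk}\notin\fN_i$. I would handle it by working throughout with outer homomorphisms into $\fG_i$ rather than automorphisms of $\fN_i$ alone.
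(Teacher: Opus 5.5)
Your strategy is the same as the paper's: an isomorphism $x\to y$ intertwines the two inertia representations, so it carries kernels to kernels. Your treatment of the faithfulness criterion and of $\band_\cE\cap\fC$ is fine. The latter is an honest subgroup scheme of $\fC$, because the transition maps restrict to the canonical descent datum on $Z(\band)$.

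\textbf{The gap.} The problem is exactly the step you flag yourself, and your remedy does not close it. Condition (i) of Lemma~\ref{lem:subband-descent} is an identity among outer isomorphisms of the $\fN$'s, that is, modulo inner automorphisms of $\fN_i$. But $\operatorname{Int}(g_{ijk})|_{\fN_i}$ is only known to lie in the image of $\fG_i\to\Aut(\fN_i)$, which is in general larger than the group of inner automorphisms of $\fN_i$. Transition maps taken ``modulo conjugation by $\fG_i$'' are not morphisms of $\operatorname{PreBand}$, so stackification has nothing to glue. The only freedom condition (ii) gives you is to replace $\mu_{ij}$ by $\operatorname{Int}(a_{ij})\lambda_{ij}|_{\fN_j}$ with $a_{ij}\in\fG_i$. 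That turns (i) into a lifting problem, and this problem can be obstructed.

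\textbf{A counterexample.} Let $E=\mathbb Z/5\rtimes(\mathbb Z/5)^\times$ act on $\Spec\mathbb C$ through a surjection $E\to\operatorname{Gal}(\mathbb C/\mathbb R)$ with kernel $P=\mathbb Z/5\rtimes\{\pm1\}$. Put $\cG=[\Spec\mathbb C/E]$, so that $\cG_{\mathbb C}\simeq BP$.
\begin{itemize}
\item The subgroup $N=\mathbb Z/5$ is normal in $E$. It is the inertia kernel of the pullback of a faithful bundle on $\cG\sslash N$, which exists by Lemma~\ref{lem:intrinsic-regular-bundle}.
\item A subband with local image $N$ is locally abelian, hence a form of $\mathbb Z/5$ over $\mathbb R$. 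It is therefore given by some $\psi(\sigma)\in\Aut(\mathbb Z/5)$ with $\psi(\sigma)^2=1$.
\item Condition (ii) forces $\psi(\sigma)$ to agree with conjugation by an element of $E\smallsetminus P$, up to the action of $\operatorname{Inn}(P)$ on $N$, which is $\pm1$. So $\psi(\sigma)$ is multiplication by $\pm2$, which has order $4$.
\end{itemize}
Hence no subband with these local kernels exists.

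The same phenomenon occurs for a finite $2$-gerbe over the $2$-special field $\mathbb R$. Take $N=\mathbb F_2^3$, $r\in\GL_3(\mathbb F_2)$ unipotent of order $4$, $E=N\rtimes\langle r\rangle$ and $P=N\rtimes\langle r^2\rangle$.

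\textbf{Relation to the paper, and what can be salvaged.} The paper's proof simply asserts that the kernels ``satisfy the descent criterion'', so it has the same hole. Moreover, even when (i) can be arranged, the band structure on the $\fN_x$ need not be unique. For example, the cyclic subgroup of order $4$ in a constant dihedral band of order $8$ underlies both the constant band and every quadratic twist of it by inversion. So ``the'' kernel subband is not a well-defined object of $\operatorname{Band}$.

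What your argument does prove is a weaker statement, and it is all that is used later: Propositions~\ref{prop:split-central-subband}, \ref{prop:central-character-main} and \ref{prop:relative-faithfulness-criterion} only use the local images. Since inner automorphisms preserve normal subgroups, the $\fN_x$ form a descent-stable family of normal subgroups, i.e.\ a normal subgroup sheaf of $\fI_\cG$. Dually, the quotient $\band/\band_\cE$ is a genuine band, because the images of the $g_{ijk}$ act by inner automorphisms there. You should read ``normal subband'' in this sense rather than try to put a band structure on the $\fN_i$ themselves.
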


\begin{proof}
An isomorphism $\phi:x\to y$ carries $\fN_x$ to $\fN_y$ by
conjugation, because it intertwines the corresponding inertia
representations.  These kernels therefore satisfy the descent criterion
of Lemma~\ref{lem:subband-descent}; they are normal because they are
kernels of group homomorphisms.  This gives $\band_{\cE}$, which is trivial exactly when all geometric inertia
representations are faithful.  Finally, restriction of
$\rho_{\cE,x}$ to $\fC$ has kernel $\fN_x\cap\fC$, which gives the last
assertion after descent.
\end{proof}

Consequently the notation $\band$, $Z(\band)$, $\fC(\band)$, and the
terms \emph{normal subband} and \emph{kernel subband} used below are
independent of the chosen cover, local group representatives, and actual
representatives of the outer transition maps; none presupposes a global
group scheme representing $\band$.

\section{The non-neutral index theorem}
\label{sec:central-index}

Let $\cG/k$ be a finite gerbe whose geometric inertia groups have order
invertible in $k$, and let $\fC$ be a split finite diagonalizable central
subband of $\operatorname{Band}(\cG)$.  In particular, the geometric inertia is
finite \'etale and the order of $\fC$ is invertible in $k$.  No $p$-group
hypothesis is imposed in this section.

The proof of the main theorem~\ref{thm:nonneutral-central-index} follows the approach of Karpenko and Merkurjev; see \cite[Theorem~4.4 and Remark~4.5]{KM}. We keep their overall framework, but reformulate each step so that it is intrinsic to the gerbe. Concretely, we replace the quotient group scheme with the rigidification $\cG\sslash\fC$, the generic quotient torsor with a generic frame object for this rigidification, and the representation category of a global group scheme with the character summands of $G_0(\cG)$. This extra work is exactly what allows the argument to go through when $\cG$ is non-neutral and its band has no representative over $k$.

\subsection{Central characters and rank ideals}

Put $\fC^*=\Hom(\fC,\mathbb G_m)$.  Since $\fC$ is split
diagonalizable, the equivalence between modules with a
$\fC$-action and $\fC^*$-graded modules
\cite[Expos\'e~I, Proposition~4.7.3]{SGA3I} gives every coherent sheaf over $\cG$ a
functorial decomposition
\[
\mathcal F=\bigoplus_{\chi\in\fC^*}\mathcal F_\chi.
\]
Let $\operatorname{Coh}_\chi(\cG)$ be the exact category of coherent
sheaves of pure character $\chi$, and set
\[
G_0^\chi(\cG)=K_0(\operatorname{Coh}_\chi(\cG)).
\]
Here $K_0$ denotes the Grothendieck $K$-group of the exact category
$\operatorname{Coh}_\chi(\cG)$.
After a finite separable extension which neutralizes $\cG$ and makes its
inertia constant, the gerbe becomes $BP$ for a finite constant group $P$
whose order is invertible in the field.  Coherent sheaves then become
finite-dimensional representations of $P$, and are therefore locally free
over the gerbe.  Rank defines a homomorphism
\[
\rk_\chi:G_0^\chi(\cG)\ra\mathbb Z.
\]

\begin{defi}\label{def:rank-ideal}
The \emph{$\chi$-rank ideal} and its positive generator are
\[
I_\chi(\cG)=\operatorname{im}(\rk_\chi)
=d_\chi(\cG)\mathbb Z.
\]
Equivalently,
\[
d_\chi(\cG)=
\gcd\{\rk(\cE):0\ne \cE\in\Vect_\chi(\cG)\}.
\]
\end{defi}

The distinction between $d_\chi$ and the minimum positive block rank is
important.  A greatest common divisor need not be realized by an actual vector bundle.

\begin{lemm}\label{lem:semisimple-gerbe}
The category $\Vect(\cG)$ is a Hom-finite semisimple $k$-linear category,
and each $\Vect_\chi(\cG)$ is a semisimple direct summand.
\end{lemm}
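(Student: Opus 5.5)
The idea is to reduce semisimplicity of $\Vect(\cG)$ to the classical Maschke theorem by Galois descent. First choose a finite Galois extension $L/k$ that neutralizes $\cG$ and makes its inertia constant, as in the discussion preceding Definition~\ref{def:rank-ideal}. Then $\cG_L\simeq BP_L$ for a finite group $P$ of order invertible in $k$. Under this identification $\Vect(\cG_L)\simeq\Rep_L(P)$, which is Hom-finite, and semisimple by Maschke's theorem since $|P|\in L^\times$.

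\emph{Hom-finiteness.} For $\cE,\cF\in\Vect(\cG)$, the sheaf $\underline{\Hom}(\cE,\cF)$ is a vector bundle on $\cG$, and $\Hom(\cE,\cF)=\H^0(\cG,\underline{\Hom}(\cE,\cF))$. By flat base change along $\Spec L\to\Spec k$, which is valid since $\cG$ is of finite type,
\[
\Hom(\cE,\cF)\otimes_kL=\Hom(\cE_L,\cF_L).
\]
The right-hand side is finite-dimensional, so $\Hom(\cE,\cF)$ is finite-dimensional over $k$.

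\emph{Semisimplicity.} It suffices to show that every monomorphism $\iota:\cE'\hookrightarrow\cE$ in $\Vect(\cG)$ (locally split, with locally free cokernel) splits. Over $L$, Maschke's theorem gives a $P$-equivariant retraction $r_L$. We average $r_L$ over $\operatorname{Gal}(L/k)$, in the sense of Galois descent for the morphism sheaf, and divide by $[L:k]$. This produces a Galois-invariant retraction, which descends to $k$.

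The divisor $[L:k]$ need not be invertible in $k$, so the averaging needs care. To avoid it, I would argue through the surjectivity of
\[
\Hom(\cE,\cE')\to\Hom(\cE',\cE'),\qquad r\mapsto r\circ\iota.
\]
This map is $k$-linear, and after the faithfully flat base change $\otimes_kL$ it becomes the map $\Hom(\cE_L,\cE'_L)\to\Hom(\cE'_L,\cE'_L)$. That map is surjective because $\iota_L$ splits. Surjectivity of a $k$-linear map is detected after field extension, so the original map is surjective. In particular $\mathrm{id}_{\cE'}$ lifts to a retraction $r$ with $r\circ\iota=\mathrm{id}$. This avoids any averaging, and it is the step I would write most carefully.

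\emph{Abelianness.} $\Vect(\cG)$ is also abelian. Kernels and cokernels of morphisms of vector bundles are coherent sheaves on $\cG$, and they become locally free over $L$ by the remark preceding Definition~\ref{def:rank-ideal}. Local freeness descends along fpqc base change, so they are vector bundles. Thus $\Vect(\cG)$ is an abelian category in which every monomorphism splits, hence semisimple. Finite length of each object follows from Hom-finiteness, since $\dim_k\operatorname{End}$ bounds the number of summands.

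\emph{The summands $\Vect_\chi(\cG)$.} By Lemma~\ref{lem:central-subband-inertia}, $\fC$ acts centrally on every bundle, and the grading $\cE=\bigoplus_\chi\cE_\chi$ is functorial. Morphisms therefore preserve the grading, and each $\cE_\chi$ is a direct summand, locally free because it is a summand of a locally free sheaf. Hence
\[
\Vect(\cG)=\prod_\chi\Vect_\chi(\cG)
\]
as an orthogonal decomposition, and each factor is closed under kernels, cokernels and summands, so it is itself semisimple.

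The main obstacle is making the base change identification of $\Hom$ and the descent of local freeness rigorous on the stack. I would handle both by working on the fppf cover $\Spec L\to\cG_L\to\cG$ and using that the quasi-coherent pushforward of $\cG\to\Spec k$ commutes with flat base change.
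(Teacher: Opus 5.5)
Your proposal is correct and follows essentially the same route as the paper. Both reduce to Maschke's theorem over a finite separable extension $L$ that neutralizes $\cG$. Both then descend a splitting by noting that surjectivity of a $k$-linear composition map on $\Hom$ spaces can be checked after the faithfully flat extension to $L$. The paper phrases this with sections of the epimorphism, $\Hom(\cE'',\cE)\to\operatorname{End}(\cE'')$, where you use retractions of the monomorphism. Your extra remarks on abelianness, and on why averaging over $\operatorname{Gal}(L/k)$ should be avoided, are sound additions but do not change the approach.
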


\begin{proof}
Choose a finite separable extension $L/k$ which neutralizes $\cG$ and
makes its inertia constant.  Then
\[
\cG_L\simeq BP
\]
for a finite constant group $P$ with $\operatorname{char}(L)\nmid |P|$.
Hence
\[
\Vect(\cG_L)\simeq\operatorname{Rep}_L(P),
\]
and Maschke's theorem shows that this category is semisimple.

We now descend a splitting explicitly. Let
\[
0\ra\cE'\ra\cE
\xrightarrow{q}\cE''\ra0
\]
be exact on $\cG$.  Since $\cG$ is finite over $k$, the spaces
$\Hom_{\cG}(\cE'',\cE)$ and
$\operatorname{End}_{\cG}(\cE'')$ are finite-dimensional, and flat base change gives
\[
\Hom_{\cG}(\cE'',\cE)\otimes_kL
\simeq
\Hom_{\cG_L}(\cE''_L,\cE_L),
\]
and similarly for endomorphisms. Concretely, these assertions follow by
choosing a finite flat atlas of $\cG$ and computing morphisms as the equalizer of
the two pullback maps on its finite relation; tensoring with the flat
extension $L/k$ preserves this equalizer.

Composition with $q$ is a $k$-linear map
\[
q_*:\Hom_{\cG}(\cE'',\cE)\ra\operatorname{End}_{\cG}(\cE'').
\]
Semisimplicity over $L$ says that
$\operatorname{id}_{\cE''}\otimes1$ lies in the image of $q_*\otimes_kL$.
Therefore the class of $\operatorname{id}_{\cE''}$ in $\operatorname{coker}(q_*)$
vanishes after tensoring with $L$, and faithful flatness implies that
it already vanishes over $k$.  A preimage of $\operatorname{id}_{\cE''}$ is a
splitting of $q$.  Thus every exact sequence splits.  Finally, the
character projectors supplied by
\cite[Expos\'e~I, Proposition~4.7.3]{SGA3I} are exact and give the
asserted semisimple direct summands.
\end{proof}

\begin{lemm}\label{lem:intrinsic-regular-bundle}
Every finite gerbe $\cA/k$ whose geometric inertia groups have order
invertible in $k$ has a vector bundle which is faithful on geometric
inertia.  This construction does not require the band of $\cA$ to be
represented over $k$.
\end{lemm}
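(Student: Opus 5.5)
The plan is to build the faithful bundle as a pushforward along a finite \'etale cover of $\cA$ by a scheme. Choose a finite separable extension $L/k$ over which $\cA$ acquires an object $x\in\cA(L)$; such an $L$ exists because $\cA$ is finite over $k$ with \'etale inertia, so it is a finite \'etale gerbe and is neutralized by a finite separable extension. This object is a morphism $x:\Spec L\to\cA$. It is representable, because $\Spec L$ is a scheme and the inertia of $\cA$ is finite, and it is finite \'etale: after base change to $\cA_{L'}\simeq BP$, with $L'/k$ Galois splitting everything, the map is a disjoint union of copies of $\Spec L'\to BP$, i.e. of $P$-torsors. Put
\[
\cE=x_*\mathcal O_{\Spec L}.
\]
Since $x$ is finite flat, $\cE$ is a vector bundle on $\cA$, of rank $[L:k]\cdot|\underline{\Aut}(x)|$ up to the usual normalization. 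No group scheme representing the band is used anywhere in this construction.

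Faithfulness on geometric inertia is a geometric statement, so we may pass to an algebraic closure $\bar k$. There $\cA_{\bar k}\simeq BP$ for a finite constant group $P$ of order invertible in $\bar k$. The base change $\Spec(L\otimes_k\bar k)\to BP$ is a disjoint union of $[L:k]$ copies of the universal torsor $\Spec\bar k\to BP$. Pushing forward $\mathcal O$ along $\Spec\bar k\to BP$ gives the regular representation $\bar k[P]$, by flat base change of pushforward along the atlas. Hence $\cE_{\bar k}\simeq\bar k[P]^{\oplus[L:k]}$, and this is faithful because the regular representation is. Alternatively, one can avoid base change entirely by arguing fiberwise: for any geometric point $y$, the fiber of $\cE$ is the functions on the fiber of $x$ over $y$, which is an $\underline{\Aut}(y)$-torsor set, possibly several copies. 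The inertia acts on it by translation, which is free and hence faithful.

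The main obstacle is verifying rigorously that $x:\Spec L\to\cA$ is finite and flat, so that $x_*\mathcal O$ is locally free and its formation commutes with base change to $\bar k$. I would check this after the fpqc base change $BP\to\cA_{\bar k}$ along the atlas $\Spec\bar k\to BP$. There the fiber product $\Spec L\times_{\cA}\Spec\bar k$ is the scheme of isomorphisms between the two objects, which is a finite \'etale $\bar k$-scheme of degree $[L:k]|P|$. Finiteness and flatness then descend. The base change compatibility of $x_*$ follows from flat base change for the representable finite morphism $x$.
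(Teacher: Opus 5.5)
Your proposal is correct and follows essentially the same route as the paper: push forward $\mathcal O$ along an object $\Spec L\to\cA$ defined over a finite separable extension, verify finite local freeness through the Isom-torsor fiber products, and identify the geometric fiber with $[L:k]$ copies of the regular representation of $P$, which is faithful. Your fiberwise variant, in which inertia acts by translation on an $\underline{\Aut}(y)$-torsor, is a harmless reformulation of the same argument that also covers geometric points over arbitrary algebraically closed fields.
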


\begin{proof}
Since $\cA$ becomes neutral over a separable closure and is of finite
presentation, choose an object $a\in\cA(L)$ over a finite separable
extension $L/k$.
The corresponding morphism
\[
u:\Spec L\ra\cA
\]
is representable, finite locally free, and surjective.  Indeed, after
base change by an object $b\in\cA(T)$, the fiber is the finite flat
$\operatorname{Isom}$-torsor between $b$ and the pullbacks of $a$.
Hence
\[
\mathcal R_a:=u_*\mathcal O_{\Spec L}
\]
is a vector bundle on $\cA$.  Let $\bar k$ be an algebraic closure and
choose a geometric object $\bar a\in\cA(\bar k)$.  Put
$P=\underline{\Aut}_{\bar k}(\bar a)$.  Since $L/k$ is separable,
$\Spec(L\otimes_k\bar k)$ is the disjoint union of
$[L:k]$ copies of $\Spec\bar k$.  If $a_\sigma$ denotes the object
coming from one such component, then
\[
(\Spec\bar k)_{a_\sigma}
\times_{\cA_{\bar k}}
(\Spec\bar k)_{\bar a}
=\underline{\operatorname{Isom}}_{\bar k}(\bar a,a_\sigma)
\]
for that component of the fiber product.  This is a nonempty
$P$-torsor, hence isomorphic over $\bar k$ to $P$ as a
$P$-scheme.  Finite base change therefore gives
\[
\bar a^*\mathcal R_a
\simeq
\H^0\!\left(
\Spec(L\otimes_k\bar k)\times_{\cA_{\bar k}}\Spec\bar k,
\mathcal O
\right)
\simeq
\mathcal O(P)^{\oplus[L:k]}.
\]
Thus the geometric fiber is explicitly a nonzero direct sum of regular
$P$-modules.  The regular action of a finite group scheme on its
coordinate algebra is faithful.
Thus $\mathcal R_a$ is faithful on geometric inertia.
\end{proof}

If $\fC$ is a split central diagonalizable subband, the restriction of a
regular $P$-module to $\fC_{\bar a}$ contains every
character of $\fC$.
Consequently $(\mathcal R_a)_\chi\ne0$ for every $\chi\in\fC^*$;
in particular, every rank ideal introduced above has a positive
generator.

\subsection{The generic central fiber}\label{sec:generic-central-fibre}

Rigidification by $\fC$ gives a morphism
\[
q:\cG\ra\cH:=\cG\!\sslash\fC
\]
with relative inertia $\fC$; see \cite[Section~5.1]{ACV}.  The stack
$\cH$ is again a finite gerbe whose geometric inertia groups have order
invertible in $k$.

For a vector bundle $\mathcal F$ of rank $r$ on $\cH$, write
$\operatorname{Fr}(\mathcal F)$ for the open substack of
\[
\Tot\!\left(
\underline{\operatorname{Hom}}_{\cH}
(\mathcal O_{\cH}^{\oplus r},\mathcal F)
\right)
\]
parametrizing frames of $\mathcal F$, that is, homomorphisms
$\mathcal O_{\cH}^{\oplus r}\to\mathcal F$ which are isomorphisms, where $\Tot$ denotes the total space.

\begin{lemm}\label{lem:generic-object-rigidification}
Let $\mathcal F$ be a faithful vector bundle on $\cH$.  Then
$U:=\operatorname{Fr}(\mathcal F)$ is an integral algebraic space.  If
$K=k(U)$, the generic point of $U$ determines an object
\[
\eta:\Spec K\ra\cH.
\]
\end{lemm}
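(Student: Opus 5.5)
We will show that $U=\operatorname{Fr}(\mathcal F)$ is an algebraic space by identifying it with a torsor over $\cH$. Then we deduce integrality from the geometry of that torsor over $\Spec k$.

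\emph{Step 1 (structure of $U\to\cH$).} Let $r=\rk\mathcal F$. By construction $\operatorname{Fr}(\mathcal F)\to\cH$ is the $\GL_r$-torsor of frames of $\mathcal F$. Equivalently, $U=\cH\times_{B\GL_{r,k}}\Spec k$, where $\cH\to B\GL_{r,k}$ is the classifying morphism $c_{\mathcal F}$. Hence $U\to\cH$ is smooth and affine, of relative dimension $r^2$, and $U$ is an algebraic stack of finite type over $k$. Its inertia is the kernel of the inertia map of $c_{\mathcal F}$: for $x\in\cH(T)$ with a frame, the automorphisms of the pair are those $g\in\underline{\Aut}_T(x)$ with $\rho_{\mathcal F,x}(g)=1$. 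Faithfulness of $\mathcal F$ on geometric inertia means these kernels are trivial on geometric points. Since the inertia of $\cH$ is finite and unramified (order invertible in $k$), a finite unramified group scheme with trivial geometric fibers is trivial. Thus $U$ has trivial inertia and is an algebraic space. Equivalently, $c_{\mathcal F}$ is representable. This is the standard criterion that a morphism from an algebraic stack is representable if it is injective on inertia.

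\emph{Step 2 (smoothness and irreducibility).} $U$ is smooth over $k$, since $\cH$ is smooth over $k$ (a gerbe with \'etale inertia is \'etale-locally $BP$) and $U\to\cH$ is smooth. It therefore suffices to prove that $U$ is connected after passing to the algebraic closure of $k$ in the function field. We prove it is geometrically irreducible.

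Over $\bar k$ we have $\cH_{\bar k}\simeq BP$ for a finite group $P$, and $\mathcal F$ corresponds to a faithful representation $V$ of dimension $r$. Then
\[
U_{\bar k}\simeq \operatorname{Isom}(\bar k^{r},V)/P\simeq \GL_{r,\bar k}/P,
\]
the quotient of the irreducible variety $\GL_r$ by a free action of the finite group $P$. This quotient is irreducible, being the image of an irreducible scheme. Hence $U$ is geometrically irreducible and smooth, so it is integral, and its generic point has residue field $K=k(U)$.

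\emph{Step 3 (the object $\eta$).} The generic point $\Spec K\to U$, composed with $U\to\cH$, gives $\eta\in\cH(K)$. No descent issue arises, because $U$ is a space and its points are honest morphisms. The only real obstacle is Step 1, specifically the inertia computation. Rigidification may make the band of $\cH$ nonrepresented, but the computation is fiberwise on objects and uses only the inertia representation $\rho_{\mathcal F,x}$ from Section~\ref{sec:bands}, so it goes through without a global group scheme.
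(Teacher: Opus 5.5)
Your proposal is correct and follows the paper's proof essentially step by step. You show that the frame space has trivial inertia because $\mathcal F$ is faithful, use that $U\to\cH$ is a $\GL_r$-torsor, and get integrality from the identification $U_{\bar k}\simeq\GL_r/P$ as the image of the irreducible $\GL_r$; the only differences are minor (you derive smoothness of $U$ from smoothness of $\cH$ rather than descending it along the finite \'etale cover $\GL_r\to\GL_r/P$, and you spell out via unramifiedness why trivial geometric inertia forces trivial inertia, which the paper leaves implicit).
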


\begin{proof}
Put $r=\rk(\mathcal F)$ and
\[
\mathcal V:=\underline{\operatorname{Hom}}_{\cH}
(\mathcal O_{\cH}^{\oplus r},\mathcal F)
\simeq \mathcal F^{\oplus r}.
\]
Let $h$ be a geometric point of $\cH$. Any automorphism of $h$ that fixes a frame in $\mathcal F_h$ must act as the identity on the entire fiber $\mathcal F_h$. Because $\mathcal F$ is faithful on geometric inertia, such an automorphism is necessarily trivial. It follows that the inertia of $U$ is trivial, and hence $U$ is an algebraic space.

The morphism $U\to\cH$ is the frame bundle of $\mathcal F$, hence a
$\GL_r$-torsor and in particular smooth and surjective.  

Moreover $U$ is integral. To see this, after base change to an algebraic closure and a choice of object of $\cH$, write $\cH_{\bar k}\simeq BG$ and let
$G\hookrightarrow\GL_r$ be the faithful representation defined by
$\mathcal F$.  Then
\[
U_{\bar k}\simeq \GL_r/G.
\]
The quotient map $\GL_r\to\GL_r/G$ is finite \'{e}tale and surjective.
Hence $\GL_r/G$ is irreducible, and smoothness descends along this cover, so
$\GL_r/G$ is reduced.  Thus $U_{\bar k}$ is integral, and so is $U$.
The generic point of $U$ gives the asserted object $\eta$.
\end{proof}

\begin{rema}
Although we do not use this terminology below, the frame morphism
$U\to\cH$ is versal in the usual sense; compare
\cite[\S 1]{DuncanReichstein2015}.  Indeed, for every nonempty open
$U^\circ\subset U$, every extension $L/k$ with $L$ infinite, and every
$h\in\cH(L)$, the morphism $\Spec L\to\cH$ is faithfully flat and surjective, so the
pullback $U_h^\circ=U^\circ\times_{\cH,h}\Spec L$ is nonempty and open in
$\operatorname{Fr}(\mathcal F_h)\simeq\GL_{r,L}$, hence has an
$L$-rational point.
\end{rema}

We keep the notation $\eta$ for the generic frame object of
Lemma~\ref{lem:generic-object-rigidification}. Since
$q:\cG\to\cH$ is an fppf gerbe banded by $\fC$, its fiber over $\eta$,
\[
\cX_\eta=\cG_K\times_{\cH_K,\eta}\Spec K,
\]
is a gerbe banded by $\fC_K$.

When $\cH=BQ$ is neutral, the frame space is $\GL_r/Q$, where
$Q\hookrightarrow\GL_r$ is the faithful representation defined by
$\mathcal F$.  Its generic point gives the same generic $Q$-torsor used in
\cite[Section~4, before Lemma~4.3]{KM}.  Thus $\eta$ is the intrinsic
substitute for that generic torsor when the rigidification need not be a
classifying stack over $k$.

For $\chi\in\fC^*$, push the class of $\cX_\eta$ along
$\chi:\fC_K\to\mathbb G_{m,K}$:
\[
\beta_\eta(\chi)=\chi_*[\cX_\eta]\in\Br(K).
\]
Pullback to $\cX_{\eta}$ of a $\chi$-bundle on $\cG$ is a
$\beta_\eta(\chi)$-twisted vector space.  The index of a Brauer class divides
the rank of every twisted vector space of that class; see
\cite[Proposition~3.1.2.1]{Lieblich2008}.  Taking the greatest common divisor of the
ranks of all $\chi$-bundles therefore gives
\begin{equation}\label{eq:index-divides-rank-ideal}
\ind\beta_\eta(\chi)\mid d_\chi(\cG).
\end{equation}

\subsection{The \texorpdfstring{$G$}{G}-theoretic comparison}

\begin{lemm}\label{lem:generic-central-g0-surjection}
For every $\chi\in\fC^*$, restriction along $\cX_\eta\to\cG$ induces a
surjection
\[
G_0^\chi(\cG)\twoheadrightarrow G_0^\chi(\cX_\eta).
\]
\end{lemm}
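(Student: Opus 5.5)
We follow the localization pattern of \cite[Theorem~4.4]{KM}, with the frame space $U=\operatorname{Fr}(\mathcal F)$ playing the role of $\GL_r/Q$. Consider the base change
\[
\cG_U:=\cG\times_{\cH}U\ra U,
\]
which is a $\fC$-gerbe over the integral algebraic space $U$. Its generic fiber is $\cX_\eta$, since $\cX_\eta=\cG_U\times_U\Spec K$. The restriction $G_0^\chi(\cG)\to G_0^\chi(\cX_\eta)$ factors as
\[
G_0^\chi(\cG)\xrightarrow{\ \pi^*\ }G_0^\chi(\cG_U)\ra G_0^\chi(\cX_\eta).
\]
Here $\pi:\cG_U\to\cG$ is the pullback of $U\to\cH$. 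The character decomposition of Lemma~\ref{lem:semisimple-gerbe}/SGA3 is compatible with both maps, because $\fC$ acts on $\cG_U$ through the relative inertia. It therefore suffices to show that each of the two maps is surjective on $\chi$-parts.

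\textbf{Step 1: the second map is surjective.} The generic fiber is the inverse limit of the open substacks $\cG_{U^\circ}$ over nonempty opens $U^\circ\subset U$. By continuity of $G$-theory, $G_0(\cX_\eta)=\varinjlim G_0(\cG_{U^\circ})$. Each restriction $G_0(\cG_U)\to G_0(\cG_{U^\circ})$ is surjective by the localization sequence for coherent sheaves on the noetherian algebraic stack $\cG_U$, since coherent sheaves extend from open substacks. Both statements are compatible with the $\chi$-grading, because the projectors are exact and commute with restriction. This step is routine.

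\textbf{Step 2: $\pi^*$ is surjective.} The morphism $U\to\cH$ is a $\GL_r$-torsor, hence so is $\pi:\cG_U\to\cG$. Moreover $\cG_U$ is an open substack of the vector bundle stack
\[
V:=\Tot\bigl(q^*\underline{\Hom}(\mathcal O^{\oplus r},\mathcal F)\bigr)\ra\cG.
\]
We then argue in two parts:
\begin{enumerate}[label=\textup{(\alph*)}]
\item Homotopy invariance of $G$-theory for vector bundle stacks gives that $G_0(\cG)\to G_0(V)$ is an isomorphism. This holds for tame quotient stacks, and $\cG$ is one since the inertia is finite of invertible order.
\item Localization gives a surjection $G_0(V)\twoheadrightarrow G_0(\cG_U)$.
\end{enumerate}
Composing these yields the surjectivity of $\pi^*$. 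Again the $\chi$-summands are respected, because $\fC$ acts on $V$ through the inertia of $\cG$. This avoids any analysis of $\GL_r$-equivariant $K$-theory.

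\textbf{Main obstacle.} The key input is the homotopy property $G_0(\cG)\simeq G_0(V)$ for a non-neutral gerbe. I would first try to invoke Thomason's homotopy invariance for quotient stacks. Failing that, I would prove it directly by descent: after a finite separable extension $L$, the stack $\cG_L$ is $BP$ and $V_L=[W/P]$ with $W$ a representation. The equivariant homotopy property for finite groups of invertible order then gives the result over $L$, and the statement descends to $k$ because all the sequences involved are compatible with Galois pushforward/pullback. Only the surjectivity is needed, and this can be checked on the level of generators.
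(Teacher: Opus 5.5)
Your proposal is correct and is essentially the paper's proof: the paper uses the same factorization through $\Tot(q^*\cV)$ and the open frame locus $\cG\times_{\cH}U$. It applies homotopy invariance (Khan, Corollary~3.18), then localization, and then spreads out from the generic point of $U$ for the last arrow. Rely on that cited homotopy invariance, or on Thomason's version using that $\cG$ is a quotient stack via the faithful bundle of Lemma~\ref{lem:intrinsic-regular-bundle}, rather than on your Galois-descent fallback: as stated, the fallback only shows that the image contains $[L:k]\cdot G_0(V)$, because $\rho_*\rho^*$ is multiplication by $[L:k]$ on $G_0(V)$ for $\rho\colon V_L\to V$.
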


\begin{proof}
Consider the sequence
\[
G_0^\chi(\cG)
\rightarrow
G_0^\chi\bigl(\cG\times_{\cH}\Tot(\cV)\bigr)
\rightarrow
G_0^\chi\bigl(\cG\times_{\cH}U\bigr)
\rightarrow
G_0^\chi(X_\eta).
\]

The first arrow is an isomorphism by homotopy invariance for
$G$-theory \cite[Corollary~3.18]{KhanKG}. Indeed,
\[
\cG\times_{\cH}\Tot(\cV)
\simeq
\Tot(q^*\cV)
\]
is the total space of the vector bundle $q^*\cV$ on $\cG$.
With Khan's convention, $\mathbf V_{\cG}(\mathcal E)$ parametrizes
cosections of $\mathcal E$, so
\[
\Tot(q^*\cV)
\simeq
\mathbf V_{\cG}\bigl((q^*\cV)^\vee\bigr),
\]
and Corollary~3.18 applies to $(q^*\cV)^\vee$.

The second arrow is surjective by localization
\cite[Theorem~3.9 and Proposition~3.1]{KhanKG}, since
\[
\cG\times_{\cH}U
\subset
\cG\times_{\cH}\Tot(\cV)
\]
is an open substack. Here Khan's $G$-theory agrees with the
$K$-theory of coherent sheaves used in this article by
\cite[Proposition~3.3]{KhanKG}. Moreover, since the
$\fC$-character decomposition is functorial and the $\chi$-projection
is exact, both homotopy invariance and localization preserve the
$\chi$-summands. Thus the preceding statements give respectively
an isomorphism and a surjection on $G_0^\chi$.

It remains to prove that
\[
G_0^\chi\bigl(\cG\times_{\cH}U\bigr)
\rightarrow
G_0^\chi(X_\eta)
\]
is surjective. Let $\mathcal E$ be a coherent sheaf of pure
$\fC$-character $\chi$ on $X_\eta$. Since $\eta$ is the generic
point of the integral algebraic space $U$, the sheaf $\mathcal E$
spreads out, after shrinking $U$, to a coherent sheaf on
\[
\cG\times_{\cH}W
\]
for some nonempty open subspace $W\subset U$. Applying the exact
$\chi$-projection, we may assume that this extension is of pure
character $\chi$. Localization for the open immersion
\[
\cG\times_{\cH}W
\subset
\cG\times_{\cH}U
\]
then shows that its class lifts to
\[
G_0^\chi\bigl(\cG\times_{\cH}U\bigr).
\]
Hence the last arrow is surjective, and therefore so is the
composite.
\end{proof}

So the above lemma is a stacky version of the $K$-theory argument in the proof of \cite[Theorem~4.4]{KM}, without assuming that the gerbe $\cG$ has a group scheme defined over $k$ as its band. 

\begin{theo}[Non-neutral index theorem]
\label{thm:nonneutral-central-index}
In the notation above, for every $\chi\in\fC^*$,
\begin{equation}\label{eq:rank-ideal-index}
I_\chi(\cG)=\ind\beta_\eta(\chi)\mathbb Z.
\end{equation}
In particular,
\[
d_\chi(\cG)=\ind\beta_\eta(\chi).
\]
\end{theo}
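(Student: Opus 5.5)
We prove the two inclusions separately. The inclusion $I_\chi(\cG)\subset\ind\beta_\eta(\chi)\mathbb Z$ is already \eqref{eq:index-divides-rank-ideal}: every $\chi$-bundle on $\cG$ pulls back to a $\beta_\eta(\chi)$-twisted vector space on $\cX_\eta$, whose rank is divisible by the index. Since $\operatorname{Coh}_\chi(\cG)$ consists of vector bundles (coherent sheaves on $\cG$ are locally free, as noted before Definition~\ref{def:rank-ideal}), $I_\chi(\cG)$ is generated by such ranks.

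For the reverse inclusion we use the surjection of Lemma~\ref{lem:generic-central-g0-surjection}. The first step is to compute $G_0^\chi(\cX_\eta)$ and its rank map. The gerbe $\cX_\eta$ is banded by the split diagonalizable group $\fC_K$. Coherent sheaves of pure character $\chi$ on $\cX_\eta$ are exactly the $\chi_*[\cX_\eta]$-twisted sheaves on $\Spec K$; here we invoke Lieblich's identification of $\chi$-twisted sheaves on a $\fC$-gerbe with twisted sheaves for the pushed-out $\mathbb G_m$-gerbe. These are, in turn, modules over a central simple algebra $D$ of class $\beta_\eta(\chi)$. Consequently
\[
G_0^\chi(\cX_\eta)\simeq K_0(D)\simeq\mathbb Z,
\]
generated by the unique simple twisted module, whose rank is $\ind\beta_\eta(\chi)$. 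Hence $\rk_\chi\bigl(G_0^\chi(\cX_\eta)\bigr)=\ind\beta_\eta(\chi)\mathbb Z$. The case $\beta_\eta(\chi)=0$ is included: the twisted module of rank one is then the structure sheaf twisted by $\chi$.

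The second step checks compatibility of ranks. Restriction along $\cX_\eta\to\cG$ preserves rank, since ranks are computed on geometric fibers and the map is a morphism of gerbes. Therefore $\rk_\chi\circ\mathrm{res}=\rk_\chi$ on $G_0^\chi(\cG)$. By the surjectivity of Lemma~\ref{lem:generic-central-g0-surjection}, the image of $\rk_\chi$ on $G_0^\chi(\cG)$ equals its image on $G_0^\chi(\cX_\eta)$, which is $\ind\beta_\eta(\chi)\mathbb Z$. This gives \eqref{eq:rank-ideal-index}, and the statement about $d_\chi$ follows by taking positive generators.

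The main obstacle is making the identification in the first step rigorous at the level of exact categories, so that $G_0^\chi(\cX_\eta)$ is really $K_0$ of a semisimple category with a single simple object of rank $\ind\beta_\eta(\chi)$. The plan is to apply Lemma~\ref{lem:semisimple-gerbe} to $\cX_\eta$, which is a finite gerbe over $K$ of invertible order, to obtain semisimplicity of $\operatorname{Coh}_\chi(\cX_\eta)$. Next, we show that any two simple $\chi$-objects $E,E'$ are isomorphic. The sheaf $E^\vee\otimes E'$ has trivial $\fC$-character, so it descends to a vector space on $\Spec K$, which is nonzero, and therefore $\Hom(E,E')\neq0$. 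This shows $K_0=\mathbb Z$. Finally, we identify the rank of the simple object with the index: its endomorphism algebra is a division algebra $\Delta$ Brauer equivalent to $\beta_\eta(\chi)$, and $\rk=\sqrt{\dim_K\Delta}=\ind$. This last identification is where Lieblich's formalism is used.
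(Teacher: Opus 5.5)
Your proposal is correct and follows the paper's proof. One inclusion is the divisibility \eqref{eq:index-divides-rank-ideal}, and the other comes from the surjection of Lemma~\ref{lem:generic-central-g0-surjection} combined with the fact that the rank image of $G_0^\chi(\cX_\eta)$ is $\ind\beta_\eta(\chi)\mathbb Z$; your extra argument (semisimplicity, uniqueness of the simple $\chi$-object via descent of $E^\vee\otimes E'$, and $\rk(E)=\deg\operatorname{End}(E)$) spells out this last fact, which the paper treats as standard.
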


\begin{proof}
By Lemma~\ref{lem:generic-central-g0-surjection}, restriction gives a
surjection
\[
G_0^\chi(\cG)\twoheadrightarrow G_0^\chi(\cX_\eta).
\]
The target is the Grothendieck group of
$\beta_\eta(\chi)$-twisted $K$-vector spaces, whose rank image is
$\ind\beta_\eta(\chi)\mathbb Z$.  Hence this ideal is contained in
$I_\chi(\cG)$.  The reverse containment is
\eqref{eq:index-divides-rank-ideal}, proving
\eqref{eq:rank-ideal-index}.
\end{proof}

\begin{coro}
\label{cor:independence-central-index}
The numbers $\ind\beta_\eta(\chi)$ are independent of all choices in
Lemma~\ref{lem:generic-object-rigidification}.  Moreover, the single generic central fiber $\cX_\eta$ realizes the
rank ideals for every $\chi\in\fC^*$ simultaneously.
\end{coro}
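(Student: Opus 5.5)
The first claim follows at once from Theorem~\ref{thm:nonneutral-central-index}. The left side $I_\chi(\cG)$ is defined intrinsically as the image of $\rk_\chi$ on $G_0^\chi(\cG)$, and so is its positive generator $d_\chi(\cG)$. Neither refers to the faithful bundle $\mathcal F$ on $\cH$, to the frame space $U=\operatorname{Fr}(\mathcal F)$, or to the resulting generic object $\eta$. So I would fix two sets of choices: faithful bundles $\mathcal F,\mathcal F'$ on $\cH$, with generic frame objects $\eta:\Spec K\to\cH$ and $\eta':\Spec K'\to\cH$. Applying the theorem to each gives
\[
\ind\beta_\eta(\chi)=d_\chi(\cG)=\ind\beta_{\eta'}(\chi)
\]
for every $\chi\in\fC^*$. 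This proves independence.

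I also need $\fC$ to be unchanged along the way. It is a subband of $\operatorname{Band}(\cG)$ fixed beforehand, and $\cH=\cG\sslash\fC$ depends only on $\fC$. The existence of at least one faithful $\mathcal F$ on $\cH$ is supplied by Lemma~\ref{lem:intrinsic-regular-bundle} applied to $\cH$. This is needed so that the set of admissible choices is nonempty.

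For the simultaneity statement, note that $\eta$ is built from $\cH$ and $\mathcal F$ alone, before any character is chosen. The fiber $\cX_\eta$ is therefore a single $\fC_K$-gerbe, with $\chi$ entering only through the pushout $\beta_\eta(\chi)=\chi_*[\cX_\eta]$. Lemma~\ref{lem:generic-central-g0-surjection} and the theorem then hold for this one $\cX_\eta$ and every $\chi$ at once. Concretely, the rank image of $G_0^\chi(\cX_\eta)$, namely $\ind\beta_\eta(\chi)\mathbb Z$, equals $I_\chi(\cG)$ for all $\chi\in\fC^*$ simultaneously.

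One could also read the statement as saying that ranks of $\chi$-twisted sheaves on $\cX_\eta$ lift to classes in $G_0^\chi(\cG)$. That is exactly the surjectivity in Lemma~\ref{lem:generic-central-g0-surjection}, which is compatible with ranks since restriction preserves rank.

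There is no real obstacle here. The only point to state carefully is that $I_\chi(\cG)$ is choice-free by definition, so every choice-dependent quantity that equals it is choice-free as well.
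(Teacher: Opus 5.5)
Your proposal is correct and is the paper's argument. Theorem~\ref{thm:nonneutral-central-index}, applied to any admissible faithful $\mathcal F$ on $\cH$ and to all $\chi\in\fC^*$ at once, identifies $\ind\beta_\eta(\chi)$ with the intrinsic generator $d_\chi(\cG)$ of $I_\chi(\cG)$. Your extra remarks — that a faithful $\mathcal F$ exists by Lemma~\ref{lem:intrinsic-regular-bundle} applied to $\cH$, and that $\eta$ is fixed before any $\chi$ is chosen — correctly spell out details the paper leaves implicit.
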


\begin{proof}
Theorem~\ref{thm:nonneutral-central-index} identifies every index with the
intrinsic integer $d_\chi(\cG)$.
\end{proof}

\section{The gerby Karpenko-Merkurjev theorem under \texorpdfstring{$p$}{p}-primary descent}
\label{sec:gerby-km}

Throughout this section, $p$ is a prime, $\operatorname{char}(k)\ne p$,
and $\cG/k$ is a finite gerbe whose geometric inertia groups are finite
$p$-groups.  We write
$\band=\operatorname{Band}(\cG)$ in the sense of
Definition~\ref{def:band}.  It is a finite $p$-band:
after a finite separable extension it is represented by a constant finite
$p$-group.  We do not assume that $\cG$ is neutral or that $\band$
is represented by a group scheme over $k$.

The strategy combines the Karpenko--Merkurjev lower bound argument with
the split central subgroup method for twisted algebraic groups. More
precisely, the $(\mu_p)^s$-banded or neutral gerbe calculation comes from
\cite[Theorems~2.1 and~3.1]{KM}, while the central subband method used in this article has as its origin in
\cite[Definition~4.2]{LMMRTori}; the normal-subgroup criterion parallels
\cite[Lemma~2.1, Theorem~7.1]{LMMRTori}. The new issue is
to carry out these constructions for a band, rather than a global group
scheme, and to combine them with the non-neutral index theorem of
Section~\ref{sec:central-index}.

\subsection{The split central subband}

We say that $\band$ has \emph{$p$-primary descent} if some finite
Galois extension of $p$-power degree represents it by a constant
$p$-group.  Constant bands have $p$-primary descent.  So do many
nonconstant forms; the condition concerns the outer descent action, not
the class of the gerbe.

Recall that a field $k$ is \emph{$p$-special} if every finite extension of
$k$ has $p$-power degree.  In that case $k$ contains a primitive $p$th root
of unity: adjoining one has degree dividing $p-1$.  Moreover, every finite
$p$-band over a $p$-special field has $p$-primary descent.

By Proposition~\ref{prop:center-of-band}, the center $Z(\band)$ is a finite commutative group scheme over $k$.  For a finite
commutative group scheme $\fA$ killed by $p$, let $\Split_k(\fA)$ denote its
maximal split subgroup scheme.  Here a subgroup scheme is \emph{split} if it
is $k$-isomorphic to $(\mu_p)^r$ for some $r\ge0$.  Assume that
$\mu_p\subset k$, and define
\[
\fC(\band):=\Split_k(Z(\band)[p]).
\]
It is a central subband and
\[
\fC(\band)\simeq(\mu_p)^c
\]
for some $c\ge0$.

The following proposition is the band analogue of
\cite[Lemma~2.1, Theorem~7.1]{LMMRTori}.

\begin{prop}\label{prop:split-central-subband}
Assume that $\mu_p\subset k$ and that $\band$ has $p$-primary
descent.  If $\band'$ is a normal subband of $\band$ and
\[
\band'\cap\fC(\band)=1,
\]
then $\band'=1$.
\end{prop}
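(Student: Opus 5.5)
Suppose $\band'\neq1$; we will produce a nontrivial element of $\band'\cap\fC(\band)$. The argument is the band version of \cite[Lemma~2.1]{LMMRTori}. First we pass to a Galois extension where everything becomes constant. Choose a finite Galois extension $L/k$ of $p$-power degree over which $\band$ is represented by a constant $p$-group $P$. Put $\Gamma=\operatorname{Gal}(L/k)$, a finite $p$-group.

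Over $L$ the normal subband $\band'$ is then a normal subgroup $N\triangleleft P$. The outer descent data give an action of $\Gamma$ on $P$ that is only defined modulo inner automorphisms. Nevertheless, by Proposition~\ref{prop:center-of-band} and the normality argument before Lemma~\ref{lem:kernel-subband}, the subgroups $Z(P)$, $N$ and $N\cap Z(P)$ carry honest $\Gamma$-actions, because inner automorphisms act trivially on the center and preserve $N$. So the data that descend are the $\Gamma$-module $Z(P)[p]$ and its $\Gamma$-stable subgroup $M:=N\cap Z(P)[p]$.

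Next we show $M\neq1$. Since $P$ is a $p$-group and $N\neq1$ is normal, the classical fact gives $N\cap Z(P)\ne1$: apply the class equation to the conjugation action of $P$ on $N$. Hence $M=N\cap Z(P)[p]$ is a nonzero $\mathbb F_p$-vector space. We next need a nonzero $\Gamma$-fixed vector in $M$. Since $\Gamma$ is a $p$-group acting on a finite nonzero $p$-group, orbit counting gives $M^\Gamma\neq0$. Choose $0\ne m\in M^\Gamma$.

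Finally we produce the split subgroup. Since $\mu_p\subset k$, the Galois-fixed element $m$ of order $p$ spans a $\Gamma$-stable subgroup $\langle m\rangle\subset Z(P)[p]$ with trivial $\Gamma$-action. This subgroup descends to a $k$-subgroup scheme of $Z(\band)[p]$ isomorphic to $\mathbb Z/p\simeq\mu_p$, where we identify via a fixed primitive root of unity. It is therefore split and contained in $\fC(\band)=\Split_k(Z(\band)[p])$. Its base change lies in $N$, so $\band'\cap\fC(\band)\ne1$. This contradiction shows $\band'=1$.

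The main obstacle is the descent bookkeeping. We must check that the $\Gamma$-action on $Z(P)$ induced by the outer descent data is a genuine action, compatible with the $\Gamma$-stable subgroup $N\cap Z(P)$. We must also check that a $\Gamma$-fixed subgroup of $Z(P)[p]\otimes L$ descends to a $k$-subgroup lying inside $\band'\cap\fC(\band)$. For this we would use Lemma~\ref{lem:subband-descent} together with the description of intersections of normal and central subbands given before Lemma~\ref{lem:kernel-subband}. A second point to verify is that the descended étale group scheme $Z(\band)$ has Galois module $Z(P)$ over $L$, with $\Gamma$ acting through $\operatorname{Gal}(L/k)$. This could fail if the absolute Galois action does not factor through $\Gamma$. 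In that case we would take $L$ large enough to also trivialize the Galois action on $Z(P)$, keeping the degree a $p$-power; this is possible because the kernel of the absolute Galois action is contained in that of the outer action.
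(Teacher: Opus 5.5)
Your proof is correct and follows the paper's argument: you pass to a $p$-power Galois extension $L$ over which $\band$ becomes a constant $P$, show $N\cap Z(P)[p]\ne 0$, use orbit counting under the $p$-group $\Gamma$ to get a nonzero fixed vector, and note that $\Gamma$-fixed vectors of $Z(P)[p]$ lie in $\fC(\band)_L=Z(P)[p]^\Gamma$. Two side remarks. Your fallback about enlarging $L$ is unnecessary, because $Z(\band)_L\simeq Z(P)_L$ is already constant once the band is constant over $L$; the relevant inclusion is $\ker(\text{outer action})\subseteq\ker(\text{action on }Z(P))$, not the reverse one you state. Also, only $N\cap Z(P)$ carries an honest $\Gamma$-action, whereas $N$ itself carries only an outer one.
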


\begin{proof}
Choose a finite Galois extension $L/k$ of $p$-power degree which
represents and splits $\band$, and write $P$ for the resulting finite
constant $p$-group.  The Galois group
$\Gamma=\operatorname{Gal}(L/k)$ is a $p$-group and acts on $Z(P)[p]$. Under this action, $\fC(\band)_L$ corresponds to the constant subgroup
\[
Z(P)[p]^\Gamma\subset Z(P)[p].
\]

If $\band'\ne1$, it is represented over $L$ by a nontrivial normal
subgroup $N\triangleleft P$.  The assertion that $\band'$ descends as
a normal subband says exactly that $N$ is stable under the outer descent
action; this is well defined because inner automorphisms preserve every
normal subgroup.
The elementary theory of finite $p$-groups gives
\[
W:=N\cap Z(P)[p]\ne0.
\]
The group $W$ is an $\mathbb F_p$-vector space stable under
$\Gamma$.  Every nonzero $\mathbb F_p$-representation of a finite
$p$-group has a nonzero fixed vector: equivalently, orbit counting gives
$|W^\Gamma|\equiv |W|\equiv0\pmod p$.  Hence
\[
1\ne W^\Gamma
\subset N\cap Z(P)[p]^\Gamma
=N\cap\fC(\band)_L.
\]
Thus $\band'\cap\fC(\band)\ne1$, this completes the proof.
\end{proof}

Because $\fC=\fC(\band)$ is split, every vector bundle $\cE$ on $\cG$
has a canonical central-character decomposition
\begin{equation}\label{eq:p-central-decomposition}
\cE=\bigoplus_{\chi\in\fC^*}\cE_\chi,
\quad
\fC^*=\Hom(\fC,\bG_m).
\end{equation}
Here $\fC$ acts on $\cE_\chi$ through the scalar character $\chi$.  Let
\[
\Vect_\chi(\cG)\subset\Vect(\cG)
\]
be the full subcategory of pure central character $\chi$, and put
\begin{equation}\label{eq:block-minimum}
m_\chi(\cG)=
\min\{\rk(\cE):0\ne \cE\in\Vect_\chi(\cG)\}.
\end{equation}

\begin{prop}\label{prop:central-character-main}
If $\mu_p\subset k$ and $\band$ has $p$-primary descent, then
\begin{equation}\label{eq:central-character-formula-main}
\rdim(\cG)=
\min_{\mathcal B\text{ basis of }\fC^*}
\sum_{\chi\in\mathcal B}m_\chi(\cG).
\end{equation}
\end{prop}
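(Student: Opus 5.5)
We prove the two inequalities separately, following the pattern of \cite[Theorem~4.1]{KM} and \cite[Theorem~7.1]{LMMRTori}. Throughout, $\fC=\fC(\band)\simeq(\mu_p)^c$, so $\fC^*\simeq(\mathbb Z/p)^c$ is an $\mathbb F_p$-vector space and ``basis'' means an $\mathbb F_p$-basis.

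For the inequality ``$\le$'', fix a basis $\mathcal B=\{\chi_1,\dots,\chi_c\}$ of $\fC^*$. For each $i$ choose $\cE_i\in\Vect_{\chi_i}(\cG)$ of rank $m_{\chi_i}(\cG)$; these exist because every $\Vect_\chi(\cG)$ is nonzero, by the remark after Lemma~\ref{lem:intrinsic-regular-bundle}. Put $\cE=\bigoplus_i\cE_i$. By Lemma~\ref{lem:kernel-subband}, $\fC$ acts on $\cE$ with kernel $\band_{\cE}\cap\fC$. On $\cE_i$ the subband $\fC$ acts through the scalar character $\chi_i$, so this kernel is $\bigcap_i\ker\chi_i$. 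Since the $\chi_i$ span $\fC^*$, this intersection is trivial. Proposition~\ref{prop:split-central-subband}, applied to the normal subband $\band_{\cE}$, then gives $\band_{\cE}=1$. So $\cE$ is faithful, and $\rdim(\cG)\le\sum_i m_{\chi_i}(\cG)$.

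For the inequality ``$\ge$'', let $\cE$ be faithful of minimal rank, and decompose it as in \eqref{eq:p-central-decomposition}. By Lemma~\ref{lem:semisimple-gerbe}, each summand $\cE_\chi$ splits further into simple objects. Let $S\subset\fC^*$ be the set of characters $\chi$ with $\cE_\chi\ne0$.
\begin{enumerate}[label=\textup{(\roman*)}]
\item Faithfulness of $\cE$ implies that $\fC$ acts faithfully on $\cE$. The kernel of this action is the common kernel of the characters in $S$, so $S$ spans $\fC^*$. Indeed, if $S$ lay in a proper subspace, some nontrivial element of $\fC$ would be killed by every $\chi\in S$.
\item Since $S$ spans $\fC^*$, we may extract a basis $\mathcal B\subset S$.
\item For each $\chi\in\mathcal B$ we have $\rk\cE_\chi\ge m_\chi(\cG)$, because $\cE_\chi$ is a nonzero object of $\Vect_\chi(\cG)$.
\end{enumerate}
Summing over $\mathcal B$ gives $\rk\cE\ge\sum_{\chi\in\mathcal B}\rk\cE_\chi\ge\sum_{\chi\in\mathcal B}m_\chi(\cG)$. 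This is at least the minimum over all bases, which proves \eqref{eq:central-character-formula-main}.

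The only substantive input is Proposition~\ref{prop:split-central-subband}, which converts faithfulness on $\fC$ into faithfulness on the whole band. Two points need care.
\begin{enumerate}[label=\textup{(\alph*)}]
\item The kernel of the $\fC$-action on a direct sum of pure-character bundles must be identified with the common kernel of the characters occurring. This is a statement about the split group scheme $\mu_p^c$ over $k$, and it holds fiberwise by Lemma~\ref{lem:central-subband-inertia}.
\item The correspondence between subgroups of $\fC$ and annihilators in $\fC^*$ must hold over $k$. This follows from Cartier duality for split diagonalizable groups, since $\mu_p\subset k$.
\end{enumerate}
The main check is (a): the scalar action on $\cE_\chi$ is genuinely through $\chi$ on every geometric fiber. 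This follows from the canonical decomposition of \cite[Expos\'e~I, Proposition~4.7.3]{SGA3I} together with Lemma~\ref{lem:central-subband-inertia}.
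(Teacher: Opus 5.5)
Your proof is correct and follows essentially the same route as the paper: you reduce faithfulness to faithfulness on $\fC$ via Lemma~\ref{lem:kernel-subband} and Proposition~\ref{prop:split-central-subband}, identify the latter with the occurring characters spanning $\fC^*$, then extract a basis and use minimal blocks. The paper packages this as a single chain of equivalences rather than two separate inequalities, but the content is the same.
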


\begin{proof}
Let $\cE$ be a vector bundle.  By Lemma~\ref{lem:kernel-subband}, the
kernel of its inertia representation determines a normal subband
$\band_{\cE}\subset\band$, and the restriction of $\cE$ to $\fC$ is
faithful if and only if $\band_{\cE}\cap\fC=1$.
Proposition~\ref{prop:split-central-subband} therefore shows that $\cE$
is faithful on geometric inertia if and only if it is faithful on $\fC$.

By \eqref{eq:p-central-decomposition}, the restriction to $\fC$ is faithful
exactly when the occurring characters span the $\mathbb F_p$-vector space
$\fC^*$.  From any spanning collection one can retain a basis and discard
the other blocks without losing faithfulness.  Each retained block has rank at
least its minimum \eqref{eq:block-minimum}; conversely, minimum blocks for a
basis have faithful direct sum.  Taking the minimum over all bases proves
\eqref{eq:central-character-formula-main}.
\end{proof}

\subsection{Attainment of central rank ideals}

The rank ideal in Definition~\ref{def:rank-ideal} need not be
generated by the rank of an object over a general field.  The arithmetic of
$p$-groups with $p$-primary descent removes this obstruction.

For a constant finite group $P$, we say that a field extension $M/k$ is a
\emph{splitting field for $P$} if every irreducible $M$-representation of
$P$ is absolutely irreducible; equivalently, since
$\operatorname{char}(M)\nmid |P|$, the semisimple algebra $M[P]$ is a
product of full matrix algebras over
$M$.  This representation-theoretic use of ``splitting'' is distinct from
the earlier notion of a split group scheme or split subgroup scheme.

\begin{lemm}\label{lem:p-power-neutralization}
Assume that $\mu_p\subset k$ and that $\band$ has $p$-primary
descent.  There is a finite separable extension $M/k$ of $p$-power
degree with the following properties:
\begin{enumerate}[label=\textup{(\roman*)}]
\item the band $\band_M$ is represented by a constant $p$-group $P$;
\item the gerbe $\cG_M$ is neutral; and
\item $M$ is a splitting field for $P$.
\end{enumerate}
\end{lemm}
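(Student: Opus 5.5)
We build $M$ in three successive extensions, each finite separable of $p$-power degree, and then use the fact that a $p$-power tower of such extensions again has $p$-power degree.

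\emph{Step 1 (representing the band).} By the $p$-primary descent hypothesis there is a finite Galois extension $L_1/k$ of $p$-power degree over which $\band_{L_1}$ is represented by a constant $p$-group $P$. Every further extension keeps the band constant, so property (i) persists.

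\emph{Step 2 (neutralizing the gerbe).} We pass to a Galois closure and a Sylow subgroup. Choose any finite separable extension $L'/L_1$ over which $\cG$ has an object; this exists because $\cG$ is of finite presentation and neutral over $k^{\rm sep}$. Enlarge $L'$ to a finite Galois extension $\tilde L/k$. Let $S\subset\operatorname{Gal}(\tilde L/k)$ be a Sylow $p$-subgroup, and put $k_1=\tilde L^S$, so that $[k_1:k]$ is prime to $p$.

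This degree is not $p$-power, so we must avoid it. The cleaner approach is to work with the cohomology class instead. Over $L_1$ the gerbe $\cG_{L_1}$ is banded by the constant band of $P$, so it is classified by Giraud $\H^2(L_1,\operatorname{band}(P))$. Its obstruction to neutrality, when nonempty, is a torsor under $\H^2(L_1,Z(P))$, and $Z(P)$ is a $p$-group. We then argue by a restriction--corestriction style argument that the class dies on a $p$-power extension. Concretely, $\cG_{L_1}$ becomes neutral over some finite Galois $\tilde L/L_1$. Take the fixed field $L_1'$ of a Sylow $p$-subgroup of $\operatorname{Gal}(\tilde L/L_1)$. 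Over $L_1'$ the gerbe is neutralized by the $p$-group extension $\tilde L/L_1'$. This, however, still requires knowing the gerbe is already neutral-free at $L_1'$.

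The honest route, which we will pursue, is the following. The obstruction to having an object over $L_1$-extensions is detected in $\H^2$ of a $p$-group coefficient module, and such classes are killed by a $p$-power extension. The key reduction is that $\cG_{L_1}$ is the pushout structure of an extension $1\to Z(P)\to P\to P/Z(P)\to 1$. We induct on $|P|$: the rigidification $\cG_{L_1}\sslash Z(P)$ has smaller band, so by induction it becomes neutral over a $p$-power extension. Then $\cG$ becomes a $Z(P)$-gerbe with class in $\H^2(-,Z(P))$. After adjoining enough $p$-power roots of unity (a $p$-power extension, since $\mu_p\subset k$), $Z(P)$ is a product of $\mu_{p^a}$'s. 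Each component then gives a Brauer class of $p$-power index, which is split by a $p$-power degree separable extension, for example a maximal subfield of the underlying division algebra, chosen separable.

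\emph{Step 3 (splitting field).} Over the current field $F$, with $\cG_F\simeq BP$, adjoin the $e$-th roots of unity, where $e$ is the exponent of $P$. Since $e$ is a power of $p$ and $\mu_p\subset k$, the extension $F(\mu_e)/F$ has $p$-power degree, because $\operatorname{Gal}(F(\mu_{p^a})/F(\mu_p))$ is a $p$-group. By Brauer's theorem, $\mathbb Q(\mu_e)$, and hence any field containing $\mu_e$ of characteristic not dividing $|P|$, is a splitting field for $P$. This gives (iii).

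The main obstacle is Step 2: neutralizing a nonabelian gerbe over a $p$-power extension. The induction through rigidification by the center, together with the splitting of $p$-primary Brauer classes by $p$-power separable extensions, is the argument I would try first.
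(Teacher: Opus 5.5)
Your final argument, induction on $|P|$ by rigidifying along the center, is correct. Steps 1 and 3 match the paper, but your neutralization step takes a genuinely different route.

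The paper does not induct. It adjoins $\mu_{p^e}$ at the outset, where $p^e$ is the exponent of $P$. Over $F=L_1(\mu_{p^e})$, with your $L_1$, it fixes a banding of $\cG_F$ by $P$ and invokes Giraud's theorem that $\H^2(F,Z(P))$ acts simply transitively on the set of gerbes with this band. Since $BP$ belongs to that set, $\cG_F$ is the twist of $BP$ by a single class $\alpha\in\H^2(F,Z(P))\simeq\prod_j\Br(F)[p^{a_j}]$. Splitting the components $\alpha_j$ by separable maximal subfields of the corresponding division algebras then neutralizes $\cG$ in one step. This is exactly the torsor statement you wrote down and then set aside. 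It already finishes the proof once you note that the torsor is nonempty because it contains $BP$.

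Your d\'evissage instead neutralizes the $P/Z(P)$-gerbe $\cG\sslash Z(P)$ by induction. It then kills the class of the $Z(P)$-gerbe obtained as the fiber over the resulting object. What it buys is that it needs only abelian $\H^2$, Kummer theory, rigidification and the fact that $Z(P)\ne1$ for $P\ne1$. The cost is one stage per step of the upper central series, each requiring a fresh $p$-power extension. The paper's argument is shorter but rests on Giraud's nonabelian twisting theorem.

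When you write it up, make three changes:
\begin{enumerate}
\item Delete the abandoned Sylow and restriction--corestriction attempts.
\item State the inductive claim for every field containing $\mu_p$ and every gerbe with constant $p$-group band.
\item Say explicitly that the $Z(P)$-gerbe is the fiber of $\cG\to\cG\sslash Z(P)$ over the neutralizing object, not $\cG$ itself, and that an object of this fiber maps to an object of $\cG$.
\end{enumerate}
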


\begin{proof}
Choose a finite Galois extension $L/k$ of $p$-power degree over which
the band is the constant group $P$.  Let $p^e$ be the exponent of
$P$, and put
\[
L_1=L(\mu_{p^e}).
\]
Since $\mu_p\subset k$, the extension $L_1/L$ has $p$-power degree.
For odd $p$, this follows because the subgroup of
$(\mathbb Z/p^e\mathbb Z)^\times$ acting trivially on $\mu_p$ is a
$p$-group; for $p=2$, the whole group
$(\mathbb Z/2^e\mathbb Z)^\times$ is a $2$-group.  Thus
$[L_1:k]$ is a power of $p$.

Fix a banding of $\cG_{L_1}$ by $P$.  The neutral gerbe $BP$, with
its standard banding, supplies a base point in the set of equivalence
classes of gerbes with this fixed band.  Giraud's central twisting theorem
says that $\H^2(L_1,Z(P))$ acts simply transitively on this pointed set
\cite[Chapter~IV, Theorem~3.3.3(i)]{Giraud}.  Consequently there is a
unique
\[
\alpha\in \H^2(L_1,Z(P))
\]
which carries $BP$ to $\cG_{L_1}$.

The finite abelian group $Z(P)$ has exponent dividing $p^e$.  Because
$\mu_{p^e}\subset L_1$, after choosing a decomposition of $Z(P)$ into
cyclic factors we may identify
\[
Z(P)\simeq\prod_{j=1}^r\mu_{p^{a_j}},
\quad a_j\le e.
\]
The Kummer sequence then identifies $\alpha$ with a tuple
\[
(\alpha_1,\ldots,\alpha_r),
\quad
\alpha_j\in\Br(L_1)[p^{a_j}].
\]
The index of every $\alpha_j$ is a power of $p$, since period and
index have the same prime divisors
\cite[Chapter~4]{GilleSzamuely}.  Choose a maximal separable subfield
of a division algebra representing $\alpha_j$; it is a splitting field
of degree $\ind(\alpha_j)$.  The compositum $M$ of these splitting
fields has $p$-power degree over $L_1$.  The class $\alpha_M$ is
zero, so $\cG_M\simeq BP_M$.  Finally, a field of characteristic prime
to $|P|$ containing a primitive root of unity of order equal to the
exponent of $P$ is a splitting field for $P$
\cite[Chapter~12]{SerreRep}.  This proves all three assertions.
\end{proof}

Recall that a Hom-finite semisimple $M$-linear category is
\emph{split semisimple} if the endomorphism algebra of every simple object
is $M$.

\begin{lemm}\label{lem:semisimple-p-descent}
Let $\cA$ be a Hom-finite semisimple $k$-linear category equipped with an
integer valued rank preserved by scalar extension.  Suppose that, for a
finite separable extension $M/k$ of $p$-power degree, the category
$\cA_M$ is split semisimple and all of its simple objects have
$p$-power rank.  Then every simple object of $\cA$ has $p$-power
rank.
\end{lemm}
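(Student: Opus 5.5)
We work with a simple object $S$ of $\cA$ and analyse its base change $S_M$. Since $\cA$ is Hom-finite semisimple and $M/k$ is finite separable, the base change $S_M$ is semisimple in $\cA_M$. Write $D=\operatorname{End}(S)$, a finite-dimensional division algebra over $k$. Then $\operatorname{End}(S_M)=D\otimes_kM$.

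Let $Z$ be the centre of $D$, let $\deg D=d$, and put $e=[Z:k]$. Because $M/k$ is separable, $Z\otimes_kM$ is a product of fields. Hence $D\otimes_kM$ is semisimple, a product of central simple algebras over those fields.

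Since $\cA_M$ is split semisimple, the endomorphism ring of $S_M=\bigoplus T_i^{n_i}$ is $\prod M_{n_i}(M)$. Matching this with $D\otimes_kM$ has two consequences:
\begin{itemize}
\item every simple factor of $D\otimes_k M$ is a full matrix algebra over $M$, so each factor of $Z\otimes_kM$ is $M$ itself;
\item the number of distinct isotypic summands $T_i$ equals the number of factors, namely $e$, and each multiplicity $n_i$ equals $d$.
\end{itemize}
The second point needs one check. Distinct simple factors of the endomorphism algebra correspond to non-isomorphic isotypic components. The factor $M_{n_i}(M)$ corresponding to a component of $Z\otimes M$ is $D\otimes_Z M\simeq M_d(M)$, so $n_i=d$.

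Therefore
\[
\rk(S)=\rk(S_M)=\sum_{i=1}^{e} d\,\rk(T_i)=d\sum_{i}\rk(T_i).
\]
This does not yet give a $p$-power, because a sum of $p$-powers need not be a $p$-power. So the key point, and the main obstacle, is to show that the ranks $\rk(T_i)$ are all equal and that $d$ and $e$ are $p$-powers.

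For $e$: the field $Z\otimes_kM\simeq M^e$ splits completely. Hence the Galois closure of $Z$ embeds into $M$, after enlarging $M$ to its Galois closure, which still has $p$-power degree. So $e=[Z:k]$ divides a $p$-power. For $d$: $D\otimes_Z M$ splits, so $\ind D$ divides $[M:k]$, a $p$-power, and hence $d$ is a $p$-power.

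For the equality of ranks, pass to a Galois closure $\tilde M/k$ of $p$-power degree with group $\Gamma$. We may assume $\cA_{\tilde M}$ is still split with $p$-power ranks; splitness is preserved by extension, and the simples are unchanged. Then $\Gamma$ acts on $\cA_{\tilde M}$ by semilinear conjugation, preserving rank. This action permutes the $T_i$ occurring in $S_{\tilde M}$, and it does so transitively, since a $\Gamma$-stable proper subset of isotypic components would descend to a proper summand of $S$. Transitivity makes all $\rk(T_i)$ equal to a single $p$-power $r$.

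Hence $\rk(S)=d\,e\,r$ is a power of $p$. The delicate step is making the Galois-descent transitivity argument rigorous in an abstract $k$-linear category. I would handle it via the idempotents of $Z\otimes_k\tilde M$: these are permuted transitively by $\Gamma$ because $Z$ is a field, and they cut out exactly the isotypic components.
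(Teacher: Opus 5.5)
Your argument is essentially the paper's. The paper decomposes $S$ after scalar extension, over $k_s$ rather than $M$, as $\bigoplus_{V\in\mathcal O}V^{\oplus s}$ with $|\mathcal O|=[F:k]$ and $s=\deg_F(D)=\ind(D)$. It shows both numbers are $p$-powers by regarding $F=Z(D)$ as a subfield of $M$, and it gets a common rank $p^a$ for the summands from Galois conjugacy, so that $\rk(S)=|\mathcal O|\,s\,p^a$. Your idempotent argument in $Z\otimes_k\tilde M$ is a sensible way to make that conjugacy step precise; the paper only asserts it.

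One assertion is false and should be deleted: the Galois closure of a separable extension of $p$-power degree need not have $p$-power degree. For example, with $k=\mathbb{Q}$, $M=\mathbb{Q}(\sqrt[3]{2})$ and $p=3$, the Galois closure has degree $6$.

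Nothing in your proof actually needs this claim, so both places that use it are easily repaired:
\begin{enumerate}[label=\textup{(\roman*)}]
\item For $e$, project $Z\to Z\otimes_kM\simeq M^e$ onto one factor. This embeds $Z$ in $M$, so $e=[Z:k]$ divides $[M:k]$ directly, which is what the paper does.
\item In the transitivity step, any Galois extension $\tilde M\supseteq M$ works, for instance $k_s$ as in the paper. Each $T_{i,\tilde M}$ has endomorphism ring $\tilde M$, so it stays simple with the same $p$-power rank whatever $[\tilde M:k]$ is.
\end{enumerate}
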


\begin{proof}
Let $S$ be simple.  Write
$D=\operatorname{End}_{\cA}(S)$ and let $F=Z(D)$.
Then $D$ is a finite dimensional division algebra, central over the
finite extension $F/k$.  Since $\cA_M$ is split and $M/k$ is
separable, $F/k$ is separable,
\[
F\otimes_kM\simeq M^{[F:k]},
\]
and every factor of $D\otimes_kM$ is a matrix algebra over $M$.
After choosing one factor, we may regard $F$ as a subfield of $M$,
and $M/F$ splits $D$.  Hence
\[
[F:k]\mid[M:k],
\quad
\deg_F(D)=\ind(D)\mid[M:F].
\]
Both integers are powers of $p$.

Over a separable closure $k_s$, Artin--Wedderburn theory gives
\begin{equation}\label{eq:aw-simple-descent}
S_{k_s}\simeq
\bigoplus_{V\in\mathcal O}V^{\oplus s},
\quad
|\mathcal O|=[F:k],
\quad
s=\deg_F(D).
\end{equation}
Here $\mathcal O$ is one Galois orbit of absolutely simple objects.
Thus $|\mathcal O|=[F:k]$ and $s=\ind(D)$ are powers of $p$.
The objects in $\mathcal O$ have a common rank, say $p^a$, because
they are Galois conjugate and the simple ranks after extension to $M$
are powers of $p$.
Taking ranks in \eqref{eq:aw-simple-descent} gives
\[
\rk(S)=|\mathcal O|\,s\,p^a,
\]
which is a power of $p$.
\end{proof}

\begin{rema}\label{rem:semisimple-descent-lmmr}
The preceding lemma abstracts \cite[Lemma~5.2]{LMMRTori}, which proves the
corresponding statement for irreducible representations of a finite
$p$-group over a $p$-closed field.
\end{rema}

\begin{lemm}\label{lem:p-power-ranks-main}
Assume that $\mu_p\subset k$ and that $\band$ has $p$-primary
descent.  Every simple object of
$\Vect(\cG)$ has rank a power of $p$.  Consequently,
\begin{equation}\label{eq:block-gcd}
m_\chi(\cG)=d_\chi(\cG).
\end{equation}
\end{lemm}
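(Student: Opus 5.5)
The plan is to apply Lemma~\ref{lem:semisimple-p-descent} to $\cA=\Vect(\cG)$ with its rank function, using the extension $M/k$ supplied by Lemma~\ref{lem:p-power-neutralization}. Then deduce \eqref{eq:block-gcd} from the $p$-power property together with the semisimplicity of each block $\Vect_\chi(\cG)$.

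First, by Lemma~\ref{lem:semisimple-gerbe}, $\Vect(\cG)$ is a Hom-finite semisimple $k$-linear category. Rank is an integer valued invariant preserved by scalar extension. Lemma~\ref{lem:p-power-neutralization} gives a finite separable $M/k$ of $p$-power degree with $\cG_M\simeq BP_M$ for a constant $p$-group $P$, where $M$ is a splitting field for $P$. Hence
\[
\Vect(\cG_M)\simeq\Rep_M(P).
\]
Since $M[P]$ is a product of matrix algebras over $M$, this category is split semisimple.

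Next, we need the base change comparison $\Vect(\cG)_M\simeq\Vect(\cG_M)$, or at least that the hypotheses of Lemma~\ref{lem:semisimple-p-descent} concern the objects $S_M$ inside $\Vect(\cG_M)$. I would read that lemma's proof as only using that $\operatorname{End}(S)\otimes_kM=\operatorname{End}(S_M)$, which is the flat base change statement established in the proof of Lemma~\ref{lem:semisimple-gerbe}. It also uses that the simple constituents of $S_M$ are simple objects of $\Vect(\cG_M)$. The simple objects of $\Vect(\cG_M)$ are absolutely irreducible representations of the $p$-group $P$. Their dimensions divide $|P|$ by the classical theorem (Frobenius / Schur), so they are powers of $p$. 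Lemma~\ref{lem:semisimple-p-descent} then shows every simple object of $\Vect(\cG)$ has $p$-power rank. The main point to check carefully is this identification of ``$\cA_M$'' with $\Vect(\cG_M)$: every simple of $\Vect(\cG_M)$ that occurs in some $S_M$ is accounted for, and the Galois-orbit decomposition in \eqref{eq:aw-simple-descent} takes place in $\Vect(\cG_{k_s})$. This is where the descent argument of Lemma~\ref{lem:semisimple-gerbe} is reused.

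For \eqref{eq:block-gcd}, each $\Vect_\chi(\cG)$ is a semisimple direct summand, so every nonzero $\cE\in\Vect_\chi(\cG)$ is a direct sum of simple objects lying in $\Vect_\chi(\cG)$. A simple object is pure of some character, since its character decomposition must be trivial. Ranks in the block are therefore sums of $p$-powers $p^{a_i}$ coming from simples of character $\chi$. Let $p^{a}$ be the smallest rank of a simple object in $\Vect_\chi(\cG)$; such a simple exists because $(\mathcal R_a)_\chi\ne0$. Then $p^a$ divides every rank in the block, so $d_\chi(\cG)=p^a$. This value is attained by that simple object, so $m_\chi(\cG)\le p^a$. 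Conversely, $m_\chi\ge d_\chi$ trivially, which gives $m_\chi(\cG)=d_\chi(\cG)$.
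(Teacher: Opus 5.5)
Your proposal is correct and follows the paper's proof: apply Lemma~\ref{lem:semisimple-p-descent} to $\Vect(\cG)$ using the extension $M$ from Lemma~\ref{lem:p-power-neutralization}. Then use semisimplicity of $\Vect_\chi(\cG)$ to see that the gcd of a nonempty set of $p$-power ranks is its minimum, and that this minimum is attained by a simple object. Your extra remarks only make explicit two points the paper leaves implicit: the identification of $\Vect(\cG)_M$ with $\Vect(\cG_M)$ via flat base change of Hom spaces, and the nonemptiness of each block via $(\mathcal R_a)_\chi\ne0$.
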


\begin{proof}
By Lemma~\ref{lem:p-power-neutralization}, there is a finite separable
extension $M/k$ of $p$-power degree such that
\[
\cG_M\simeq BP_M
\]
and $M$ is a splitting field for $P$.  Hence
$\Vect(\cG_M)\simeq\operatorname{Rep}_M(P)$ is split semisimple.  The ranks of its
simple objects are the ranks of the absolutely irreducible
representations of the finite $p$-group $P$, and these ranks are
powers of $p$.  Lemma~\ref{lem:semisimple-p-descent}, applied to
$\Vect(\cG)$, now shows that every simple object over $k$ has
$p$-power rank.

By Lemma~\ref{lem:semisimple-gerbe}, the ranks in
$\Vect_\chi(\cG)$ are nonnegative integral combinations of the ranks of its
simple objects.  Their greatest common divisor is therefore the greatest
common divisor of a nonempty collection of powers of $p$, which is its
smallest member.  The smallest simple rank is also the smallest rank of any
nonzero object, proving \eqref{eq:block-gcd}.
\end{proof}

\begin{rema}
The conclusion of Lemma~\ref{lem:p-power-ranks-main} can fail before
passing to a $p$-closure if the band does not have $p$-primary
descent.  Galois orbits of irreducibles can then have prime-to-$p$
cardinality, and representation dimension can drop after a prime-to-$p$ extension.
\end{rema}

\subsection{Essential-dimension inequalities}

The following result is known; see \cite[Theorem~3.1]{KM}. We repackage it with our notions.

\begin{theo}
\label{thm:km-elementary-computation}
Let $F$ be a field of characteristic different from $p$, and let $\cX/F$
be a gerbe banded by $\fC=(\mu_p)^c$.  If
\[
\beta:\fC^*\ra\Br(F)[p]
\]
is its character-obstruction map, then
\begin{equation}\label{eq:km-elementary-basis}
\ed_F(\cX;p)=\ed_F(\cX)=
\min_{\mathcal B\text{ basis of }\fC^*}
\sum_{\chi\in\mathcal B}\ind\beta(\chi).
\end{equation}
\end{theo}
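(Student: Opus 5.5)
The proof is a transcription of \cite[Theorem~3.1]{KM} into the language of gerbes. I would prove the two inequalities $\ed_F(\cX)\le\min_{\mathcal B}\sum\ind\beta(\chi)$ and $\ed_F(\cX;p)\ge\min_{\mathcal B}\sum\ind\beta(\chi)$. Together with $\ed_F(\cX;p)\le\ed_F(\cX)$ these give the result.

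For the upper bound, fix a basis $\mathcal B=\{\chi_1,\dots,\chi_c\}$ of $\fC^*$. Since $\fC$ is split, $\cX$ is the fiber product over $F$ of the $\mu_p$-gerbes $\cX_{\chi_i}:=(\chi_i)_*\cX$. Concretely, the map $\cX\to\prod_i\cX_{\chi_i}$ is an isomorphism because $\mathcal B$ identifies $\fC$ with $\mu_p^c$. Each $\mu_p$-gerbe $\cX_{\chi_i}$ admits a vector bundle of rank $\ind\beta(\chi_i)$ with central character $\chi_i$. One can take the pullback of the twisted sheaf given by a simple module over the division algebra of $\beta(\chi_i)$; this is also the realization of $d_{\chi_i}$ in Theorem~\ref{thm:nonneutral-central-index}. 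Pull these back to $\cX$ and take their direct sum $\cE$. The characters occurring in $\cE$ span $\fC^*$, so $\cE$ is faithful on $\fC$, which is the whole inertia. The upper bound $\ed_F(\cX)\le\rdim(\cX)\le\rk(\cE)$, recorded in Section~\ref{sec:essential-dimension-background} and proved in Proposition~\ref{prop:ed-upper-main}, then gives $\ed_F(\cX)\le\sum_i\ind\beta(\chi_i)$. Minimizing over $\mathcal B$ yields the upper bound.

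For the lower bound I would follow Karpenko--Merkurjev. Write $\cX=\prod_{\chi\in\mathcal B}\cX_\chi$ for any basis. An object of $\cX$ over $K/F$ exists exactly when $\beta(\chi)_K=0$ for all $\chi$. So $\cX$ is the classifying stack of $\mu_p^c$ over the splitting locus, and it admits a morphism to $\prod_\chi\mathrm{SB}(A_\chi)$. Here $A_\chi$ is a central simple algebra of class $\beta(\chi)$, and $\mathrm{SB}$ denotes its Severi--Brauer variety.

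The key input is the incompressibility theorem \cite[Theorem~2.1]{KM}. It says that if the $A_\chi$ have $p$-power index and $\mathcal B$ is chosen minimizing $\sum\ind$ (a "minimal basis", obtained greedily since $\ind$ is a $p$-power), then the product $Y=\prod_{\chi\in\mathcal B}\mathrm{SB}(A_\chi)$ is $p$-incompressible. That is, its canonical $p$-dimension equals $\dim Y=\sum(\ind\beta(\chi)-1)$.

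Next I would show that $\ed_F(\cX;p)\ge\cdim_p(Y)+\ed(\mu_p^c)=\cdim_p(Y)+c$. Take the generic object over the function field $F(Y)$. Suppose it descends, after a prime-to-$p$ extension, to a field $K_0$ of transcendence degree $t$. Then $K_0$ splits all $\beta(\chi)$, which yields a rational map $Y\dashrightarrow Y$ of degree prime to $p$ with image of dimension at most $t$. Over the splitting field the torsor part still contributes $c$ parameters; this uses the $\mu_p$-torsor computation and the fact that $\ed(\mu_p^c;p)=c$ persists over $F(Y)$. This is the genericity-type argument of \cite[Theorem~3.1]{KM}, and for $\mu_p$-gerbes the paper's Lotscher citation also packages it as $\ed=\cdim+\ed(\band)$.

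Adding up gives $\ed_F(\cX;p)\ge\sum(\ind\beta(\chi)-1)+c=\sum\ind\beta(\chi)$. I expect the main obstacle to be this step: the reduction from a prime-to-$p$ descent of the generic object to a degree-prime-to-$p$ correspondence on $Y$, combined with additivity of the $c$ torsor parameters. I would import it essentially verbatim from \cite{KM} rather than reprove it.
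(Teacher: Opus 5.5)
Your proposal is correct, and its lower bound is the same as the paper's: both import \cite[Theorems~2.1 and~3.1]{KM}. The real difference is the upper bound. The paper takes both inequalities from the Karpenko--Merkurjev formula $\ed_F(\cX;p)=\ed_F(\cX)=\cdim_p(\im\beta)+c$. You instead build the faithful bundle $\bigoplus_{\chi\in\mathcal B}\cE_\chi$ of rank $\sum_{\chi\in\mathcal B}\ind\beta(\chi)$ from simple modules over the division algebras, and then apply Proposition~\ref{prop:ed-upper-main}. That proposition's proof does not use this theorem, so there is no circularity. Your route makes half of the statement intrinsic to the paper's vector-bundle framework. Add the observation that any faithful bundle has characters spanning $\fC^*$, and that a nonzero pure $\chi$-block has rank divisible by $\ind\beta(\chi)$. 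Then your construction shows directly that $\rdim(\cX)$ equals the same minimum. That fact is used tacitly for $\cA_\eta$ in the proof of Theorem~\ref{thm:relative-km-general}.

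Three points need tightening.

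First, $\cX$ admits no morphism to $\prod_\chi\mathrm{SB}(A_\chi)$ unless it is neutral. A morphism from a gerbe over $F$ to an algebraic space factors through $\Spec F$, so it would give an $F$-point of the product. What you actually use is only that $\cX(K)\neq\varnothing$ if and only if $Y(K)\neq\varnothing$.

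Second, \cite[Theorem~2.1]{KM} concerns bases of the subgroup $\im\beta\subset\Br(F)[p]$, not bases of $\fC^*$. To get $\cdim_p(Y)=\dim Y$ for a minimizing basis of $\fC^*$, you need the translation that the paper's proof supplies. The images of any basis $\mathcal B$ of $\fC^*$ span $\im\beta$, so
$\sum_{\chi\in\mathcal B}\ind\beta(\chi)\ge m+(c-\dim_{\mathbb F_p}\im\beta)$,
where $m$ is the minimum of $\sum\ind$ over bases of $\im\beta$. Equality holds for lifts of a minimal basis of $\im\beta$ completed by a basis of $\ker\beta$.

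Third, your sketch of the additivity step does not prove it. A descent field splitting $Y$ gives $t\ge\cdim_p(Y)$, and the torsor count gives at best $t\ge c$ separately; together these yield only the maximum. Also, an object over $F(Y)$ need not carry the $c$ torsor parameters: if $\cX$ is neutral, then $F(Y)=F$ and every object over $F$ has essential dimension $0$. The sum $\cdim_p(Y)+c$ is the real content of \cite[Theorem~3.1]{KM}. So in this direction the citation carries the argument, not your sketch.
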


\begin{proof}
Karpenko--Merkurjev prove
\[
\ed_F(\cX;p)=\ed_F(\cX)=\cdim_p(\im\beta)+c
\]
for a $(\mu_p)^c$-gerbe, where $\cdim_p$ denotes the $p$-canonical dimension, and compute the canonical $p$-dimension of the Brauer subgroup by a minimum-index basis \cite[Theorems~2.1 and~3.1 and Remark~2.9]{KM}. Choose lifts in $\fC^*$
of such a basis of $\im\beta$, then complete them by a basis of $\ker\beta$. Characters in the kernel contribute index $1$, so this gives the right-hand side of \eqref{eq:km-elementary-basis}. Conversely, the images of any basis of $\fC^*$ generate $\im\beta$, and the minimum basis statement gives the reverse inequality.
\end{proof}

\begin{lemm}\label{lem:finite-fiber-ed}
Let $F$ be a field, let $f:\cA\to\cB$ be a morphism of finite gerbes, and
let $\eta\in\cB(F)$.  Put
\[
\cZ=\cA\times_{\cB,\eta}\Spec F.
\]
Then
\[
\ed_F(\cA;p)\ge\ed_F(\cZ;p).
\]
\end{lemm}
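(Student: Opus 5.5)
The plan is to compare essential dimensions of objects of $\cZ$ with those of their images in $\cA$, using the finiteness of the relative inertia of $f$. Let $K/F$ be a field extension and $z\in\cZ(K)$. Then $z$ is a pair $(a,\phi)$ with $a\in\cA(K)$ and $\phi:f(a)\simeq\eta_K$. It suffices to prove
\[
\ed_F(z;p)\le\ed_F(a;p),
\]
because the supremum over all $(K,z)$ of the left side is $\ed_F(\cZ;p)$. The right side is bounded by $\ed_F(\cA;p)$.

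First treat the non-prime-local version $\ed_F(z)\le\ed_F(a)$. Suppose $a$ descends to $a_0\in\cA(K_0)$ with $F\subset K_0\subset K$ and $\trdeg_F K_0=\ed_F(a)$. Consider the sheaf
\[
I=\underline{\operatorname{Isom}}_{K_0}\bigl(f(a_0),\eta_{K_0}\bigr).
\]
It is a torsor under $\underline{\Aut}(\eta)$ twisted by $a_0$, or equivalently a finite $K_0$-scheme, since $\cB$ is a finite gerbe. The isomorphism $\phi$, transported through $a_0|_K\simeq a$, gives a $K$-point of $I$. Let $K_1\subset K$ be the residue field of the image point of $\Spec K\to I$. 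Then $K_1/K_0$ is finite, hence $\trdeg_F K_1=\trdeg_F K_0$. The pair $(a_0|_{K_1},\phi_1)$ is an object of $\cZ(K_1)$ restricting to $z$, up to the identification of $a$ with $a_0|_K$, which is compatible with $\phi$ by construction. This gives $\ed_F(z)\le\ed_F(a)$.

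For the prime-local statement, let $K'/K$ be finite of degree prime to $p$ with $\ed_F(a_{K'})=\ed_F(a;p)$. Applying the above argument to $z_{K'}$ yields
\[
\ed_F(z;p)\le\ed_F(z_{K'})\le\ed_F(a_{K'})=\ed_F(a;p).
\]
Taking suprema then proves the lemma.

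The main point needing care is that $I$ is a finite scheme, so that passing from $K_0$ to $K_1$ does not raise transcendence degree. This holds because $\underline{\Aut}(\eta)$ is finite and Isom sheaves between objects of a gerbe are torsors under such groups, hence finite (étale under the standing hypotheses). One also has to check that the descended pair really restricts to $z$, and not merely to some other lift of $a$. This is ensured by choosing $K_1$ as the field of definition of the specific $K$-point $\phi$.
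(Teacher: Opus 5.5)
Your proposal is correct and follows essentially the same route as the paper: descend the underlying object of $\cA$ to a field $K_0$, view the isomorphism to $\eta$ as a point of the finite $K_0$-scheme $\operatorname{Isom}\bigl(f(a_0),\eta_{K_0}\bigr)$, and pass to the residue field of its image, which is finite over $K_0$ and so does not raise transcendence degree. The paper builds the prime-to-$p$ extension directly into this argument, whereas you first prove $\ed_F(z)\le\ed_F(a)$ and then apply it to $z_{K'}$; this is only a reorganization of the same proof.
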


Compare \cite[Theorem~4.2]{KM}, where the same proof is
used for the gerbe attached to a central quotient of a finite $p$-group.

\begin{proof}
An object of $\cZ(L)$ is a pair $(x,\theta)$ with $x\in\cA(L)$ and
\[
\theta:f(x)\xrightarrow{\sim}\eta_L.
\]
Let $L'/L$ be a finite extension of degree prime to $p$, and suppose that
$x_{L'}$ descends to $x_0\in\cA(L_0)$ for an intermediate field
$F\subset L_0\subset L'$.  The isomorphism $\theta_{L'}$ is an $L'$-point
of the finite $L_0$-scheme
\[
\operatorname{Isom}_{\cB}\bigl(f(x_0),\eta_{L_0}\bigr).
\]
Its scheme-theoretic image has residue field $L_1$ finite over $L_0$ and
contained in $L'$.  Over $L_1$ the isomorphism descends, so the pair
$(x,\theta)_{L'}$ descends to $L_1$.  Since
\[
\trdeg_F(L_1)=\trdeg_F(L_0),
\]
adding the isomorphism does not increase the number of parameters.  Taking
the infimum over prime-to-$p$ extensions and descent fields, and then the
supremum over objects of $\cZ$, proves the inequality.
\end{proof}

\begin{lemm}\label{lem:ed-base-change}
For every field extension $K/k$ and every category fibered in groupoids
$\cA/k$,
\[
\ed_k(\cA;p)\ge\ed_K(\cA_K;p).
\]
\end{lemm}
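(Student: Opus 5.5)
The plan is to compare the two sides object by object. For every extension $L/K$ and every $\xi\in\cA_K(L)$, I will show that $\ed_K(\xi;p)\le\ed_k(\xi;p)$, where on the right $\xi$ is regarded as an object of $\cA(L)$ through the composite $\Spec L\to\Spec K\to\Spec k$. Granting this, the lemma follows by taking suprema. The left-hand side $\ed_K(\cA_K;p)$ is a supremum over pairs $(L/K,\xi)$. Each such pair also gives a pair $(L/k,\xi)$ appearing in the supremum that defines $\ed_k(\cA;p)$. So the supremum on the left is bounded by the supremum on the right.

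The key step is to compare descent fields. Fix a finite extension $L'/L$ of degree prime to $p$, and suppose $\xi_{L'}$ descends, as an object of $\cA$, to some $\xi_0\in\cA(L_0)$ with $k\subset L_0\subset L'$. I will take $L_0K\subset L'$, the compositum formed inside $L'$; it contains $K$. Then $\xi_0|_{L_0K}$ is an object of $\cA_K(L_0K)$, because $\cA_K(R)=\cA(R)$ for any $K$-algebra $R$. It restricts to $\xi_{L'}$, so $\xi_{L'}$ descends over $K$ to $L_0K$.

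It remains to bound the transcendence degree. Since $L_0K$ is generated over $K$ by $L_0$, a transcendence basis of $L_0$ over $k$ still generates $L_0K$ over $K$ up to an algebraic extension. Hence
\[
\trdeg_K(L_0K)\le\trdeg_k(L_0).
\]
Minimizing over descent fields gives $\ed_K(\xi_{L'})\le\ed_k(\xi_{L'})$. Then minimizing over prime-to-$p$ extensions $L'/L$, which are the same extensions in both definitions, gives the claimed per-object inequality.

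I do not expect a serious obstacle. The only point needing care is bookkeeping. I must make sure that the descent notion for $\cA_K$ uses intermediate fields $K\subset K_0\subset L'$. I must also make sure that the compositum is taken inside $L'$, so that the restriction isomorphism $\xi_0|_{L'}\simeq\xi_{L'}$ factors through $L_0K$. Both points are immediate from the definition of base change of a fibered category.
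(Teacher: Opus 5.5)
Your proposal is correct and follows the paper's proof: you descend $\xi_{L'}$ from $L_0$ to the compositum $KL_0\subset L'$, use $\trdeg_K(KL_0)\le\trdeg_k(L_0)$, and then take minima and suprema. You also observe that the prime-to-$p$ extensions $L'/L$ are literally the same in both definitions, which makes the paper's extra remark about the degree of the compositum extension unnecessary.
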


\begin{proof}
If an object over a $K$-field descends, after a prime-to-$p$ extension, to a
$k$-subfield $L_0$, then after adjoining $K$ it descends to $KL_0$.
Submodularity of transcendence degree gives
\[
\trdeg_K(KL_0)\le\trdeg_k(L_0).
\]
The degree of the corresponding compositum extension still divides a
prime-to-$p$ degree.  Taking minima and suprema gives the claim.
\end{proof}

\begin{theo}
\label{thm:central-rank-lower-bound}
Let $\cA/k$ be a finite gerbe whose geometric inertia groups have order
invertible in $k$, let $\fC\simeq(\mu_p)^c$ be a split central subband,
and assume $\operatorname{char}(k)\ne p$.  Then
\begin{equation}\label{eq:central-rank-lower-bound}
\ed_k(\cA;p)\ge
\min_{\mathcal B\text{ basis of }\fC^*}
\sum_{\chi\in\mathcal B}d_\chi(\cA).
\end{equation}
No $p$-group hypothesis is imposed on the full inertia.
\end{theo}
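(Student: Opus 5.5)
We prove \eqref{eq:central-rank-lower-bound} by specializing to the generic central fiber of Section~\ref{sec:generic-central-fibre} and reading off its essential $p$-dimension from Theorem~\ref{thm:km-elementary-computation}.

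First, form the rigidification $q:\cA\to\cH=\cA\sslash\fC$. Choose a faithful vector bundle $\mathcal F$ on $\cH$; one exists by Lemma~\ref{lem:intrinsic-regular-bundle}, since $\cH$ is again a finite gerbe with invertible inertia order. Let $\eta:\Spec K\to\cH$ be the generic frame object of Lemma~\ref{lem:generic-object-rigidification}, with $K=k(U)$.

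Second, base change to $K$. Lemma~\ref{lem:ed-base-change} gives
\[
\ed_k(\cA;p)\ge\ed_K(\cA_K;p).
\]
The morphism $q_K:\cA_K\to\cH_K$ is a morphism of finite gerbes, and $\eta\in\cH_K(K)$. Hence Lemma~\ref{lem:finite-fiber-ed} gives
\[
\ed_K(\cA_K;p)\ge\ed_K(\cX_\eta;p),
\]
where $\cX_\eta=\cA_K\times_{\cH_K,\eta}\Spec K$.

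Third, identify $\cX_\eta$. It is a gerbe over $K$ banded by $\fC_K\simeq(\mu_p)^c$. Its character-obstruction map is exactly $\chi\mapsto\beta_\eta(\chi)=\chi_*[\cX_\eta]$. Since $\operatorname{char}(K)=\operatorname{char}(k)\ne p$, Theorem~\ref{thm:km-elementary-computation} applies:
\[
\ed_K(\cX_\eta;p)=\min_{\mathcal B}\sum_{\chi\in\mathcal B}\ind\beta_\eta(\chi).
\]
Finally, Theorem~\ref{thm:nonneutral-central-index} gives $\ind\beta_\eta(\chi)=d_\chi(\cA)$ for every $\chi\in\fC^*$. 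Substituting and chaining the three inequalities yields \eqref{eq:central-rank-lower-bound}.

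The main point to check is the hypotheses of the index theorem. Section~\ref{sec:central-index} assumes only a split finite diagonalizable central subband and inertia of invertible order, so it applies here without any $p$-group assumption on the full inertia, as the statement requires.

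The second thing to verify is that the obstruction map in Theorem~\ref{thm:km-elementary-computation} is defined by the same pushout convention as $\beta_\eta$. The Karpenko--Merkurjev convention could differ by a sign, i.e.\ use $\chi^{-1}$. This is harmless: $\ind\beta(\chi)=\ind\beta(\chi^{-1})$, and inverting characters permutes the bases of $\fC^*$, so the minimum is unchanged.

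I expect the only delicate step to be the fiber comparison. Lemma~\ref{lem:finite-fiber-ed} is stated for morphisms of gerbes over the field $F$ where $\eta$ lives. So we must apply it over $F=K$ after the base change, not directly over $k$. Keeping the order of the two reductions as above makes this work.
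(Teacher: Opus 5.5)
Your proposal is correct and is the paper's argument step for step: rigidify by $\fC$, take the generic frame object $\eta$, chain Lemma~\ref{lem:ed-base-change} and Lemma~\ref{lem:finite-fiber-ed} to get $\ed_k(\cA;p)\ge\ed_K(\cX_\eta;p)$, evaluate the right-hand side with Theorem~\ref{thm:km-elementary-computation}, and then substitute $\ind\beta_\eta(\chi)=d_\chi(\cA)$ from Theorem~\ref{thm:nonneutral-central-index}. Your extra checks are sound and only make explicit what the paper leaves implicit: the existence of a faithful bundle on $\cH$ via Lemma~\ref{lem:intrinsic-regular-bundle}, the harmlessness of the $\chi$ versus $\chi^{-1}$ convention, and applying the fiber lemma over $K$ after base change.
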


\begin{proof}
Rigidify $\cA$ by $\fC$, and let
$\eta\in(\cA\sslash\fC)(K)$ be the generic frame object of
Lemma~\ref{lem:generic-object-rigidification}.  Its central fiber
$\cX_\eta$ is a $\fC_K$-gerbe.  Lemma~\ref{lem:ed-base-change} and
Lemma~\ref{lem:finite-fiber-ed} give
\[
\ed_k(\cA;p)\ge
\ed_K(\cA_K;p)\ge
\ed_K(\cX_\eta;p).
\]
Theorem~\ref{eq:km-elementary-basis} computes the last term as
\[
\min_{\mathcal B\text{ basis of }\fC^*}
\sum_{\chi\in\mathcal B}\ind\beta_\eta(\chi).
\]
Then theorem~\ref{thm:nonneutral-central-index} identifies each
$\ind\beta_\eta(\chi)$ with $d_\chi(\cA)$, proving
\eqref{eq:central-rank-lower-bound}.
\end{proof}

\begin{prop}\label{prop:ed-upper-main}
Let $\cG/k$ be a finite gerbe whose geometric inertia groups have order
invertible in $k$.  Then
\[
\ed_k(\cG;p)\le\ed_k(\cG)\le\rdim(\cG).
\]
\end{prop}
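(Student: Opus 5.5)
The first inequality $\ed_k(\cG;p)\le\ed_k(\cG)$ is formal: for each object, the minimum defining $\ed_k(\xi;p)$ includes the trivial extension $K'=K$, so $\ed_k(\xi;p)\le\ed_k(\xi)$, and we take suprema. The substance is $\ed_k(\cG)\le\rdim(\cG)$. The plan is a stacky version of the classical bound $\ed_k(BG)\le\dim V$ for a generically free representation $V$: compress every object into the total space of a faithful vector bundle, and then compute essential dimension on a free open locus.

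Fix a faithful vector bundle $\cE$ on $\cG$ of rank $r=\rdim(\cG)$; one exists by Lemma~\ref{lem:intrinsic-regular-bundle}. Let $\cV=\Tot(\cE)\to\cG$ be its total space, and let $\cV^\circ\subset\cV$ be the locus where the inertia of $\cV$ is trivial. We first show that $\cV^\circ$ is open and nonempty, and that it is an algebraic space of dimension $r$. Both can be checked after base change to $\bar k$ with a chosen object, where $\cG_{\bar k}\simeq BP$ and $\cV_{\bar k}\simeq[V/P]$ for a faithful representation $V$ of the finite group $P$, of order invertible in $k$. The non-free locus is then the image of the finite union $\bigcup_{1\ne g\in P}V^g$ of proper linear subspaces, which is closed. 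Its complement is nonempty and descends to an open substack of $\cV$, because the inertia of $\cV$ is a finite group stack and the locus of trivial stabilizer is open. On that complement the inertia is trivial, so $\cV^\circ$ is an algebraic space, of finite type over $k$ and of dimension $r$, since $\cV\to\cG$ has relative dimension $r$ and $\cG$ has dimension $0$.

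Next comes the compression step. Let $\xi\in\cG(K)$. If $K$ is infinite, the fiber $\xi^*\cE$ is an $r$-dimensional $K$-vector space, and $\cV^\circ\times_{\cG,\xi}\Spec K$ is a nonempty open subset of $\mathbb A^r_K$ (nonempty because it is so geometrically). It therefore has a $K$-point, which lifts $\xi$ to an object $v\in\cV^\circ(K)$. Since $\cV^\circ$ is an algebraic space of finite type, $v$ factors through a point of $\cV^\circ$ with residue field $K_0\subset K$, and $\trdeg_k K_0\le\dim\cV^\circ=r$; the argument of \cite[Example~2.4]{BRV} applies here. Composing with the projection $\cV^\circ\to\cG$ shows that $\xi$ descends to $K_0$, so $\ed_k(\xi)\le r$. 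If $K$ is finite, then $\trdeg_k K=0$ and $\xi$ is already defined over $K$, so there is nothing to prove. Taking the supremum over all $\xi$ gives $\ed_k(\cG)\le r=\rdim(\cG)$.

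The step that needs the most care is the openness of $\cV^\circ$ and its descent over $k$ without assuming that the band is represented. The key point is that trivial-stabilizer loci are intrinsic to the stack $\cV$, namely the locus where the inertia $\fI_{\cV}\to\cV$, which is finite unramified, has degree $1$. So the locus is defined over $k$, and it is enough to check openness and nonemptiness geometrically, where it reduces to the linear-algebra statement above.
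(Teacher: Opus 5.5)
Your proposal is correct and follows the paper's proof essentially step by step: take a minimal faithful bundle $\cE$, pass to the nonempty open trivial-inertia locus in $\Tot(\cE)$ (an algebraic space of dimension $r$), lift each object to a $K$-point of that locus using rational points of a nonempty open subset of affine space, and bound the transcendence degree of the residue field of its image by $r$. The only differences are cosmetic. You split into finite versus infinite $K$ instead of first reducing to a finitely generated field of positive transcendence degree. You also justify openness of the free locus explicitly, via upper semicontinuity of the degree of the finite unramified inertia, where the paper leaves this implicit.
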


\begin{proof}
Let $\cE$ be faithful of rank $r$, and let $U\subset\Tot(\cE)$ be the
locus of trivial inertia.  We first check that $U$ is nonempty.  After a
finite separable extension $k'/k$ which neutralizes $\cG$ and makes its inertia
constant, write $\cG_{k'}\simeq BP$ and let $V$ be the representation
corresponding to $\cE$. The group $P$ is finite and constant, and $V$ is faithful. Therefore
\[
V^{\mathrm{free}}
=V\setminus\bigcup_{1\ne g\in P}V^g
\]
is a nonempty $P$-invariant open subset, since each fixed-point space $V^g$ is a proper linear subspace. Consequently, after this extension the free inertia locus in $\Tot(\cE)$ is nonempty, and thus it was already nonempty over $k$. On this locus the inertia is trivial, so it is an algebraic space, and $\dim U=r$.

Take $x\in\cG(L)$.  If $x$ descends to an algebraic extension of $k$,
finite presentation descends it further to a finite extension, so its
essential dimension is zero.  Otherwise finite presentation lets us
descend $x$ to a finitely generated field extension of $k$ and replace
$L$ by that field.  This field has positive transcendence degree over $k$,
and in particular is infinite.  The fiber $U_x$ is a nonempty open subset
of the affine space $\cE_x$, so $U_x(L)\ne\varnothing$.  Such a point is a
lift $u\in U(L)$ of $x$.
The morphism $\Spec L\to U$ factors through the residue field of its
image, whose transcendence degree over $k$ is at most
$\dim U=r$.  Forgetting the vector in the pair $u=(x,v)$ shows that
$x$ descends to the same field.  Thus $\ed_k(\cG)\le r$.

Minimizing over $\cE$ proves the second inequality; the first is formal
from the definition of essential dimension at a prime.
\end{proof}

\begin{theo}
\label{thm:gerby-km-special}
Assume that $\mu_p\subset k$ and that the band $\band$ has
$p$-primary descent.  Then
\begin{equation}\label{eq:gerby-km-special}
\ed_k(\cG)=\ed_k(\cG;p)=\rdim(\cG).
\end{equation}
\end{theo}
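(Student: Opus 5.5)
The plan is to sandwich both essential dimensions between two expressions that the earlier results show are equal. The upper bound is already available: Proposition~\ref{prop:ed-upper-main} gives
\[
\ed_k(\cG;p)\le\ed_k(\cG)\le\rdim(\cG).
\]
It therefore suffices to prove the lower bound $\ed_k(\cG;p)\ge\rdim(\cG)$.

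For the lower bound, put $\fC=\fC(\band)\simeq(\mu_p)^c$, the split central subband built from the maximal split subgroup of $Z(\band)[p]$. This uses the hypothesis $\mu_p\subset k$. Since $\fC$ is a split finite diagonalizable central subband, Theorem~\ref{thm:central-rank-lower-bound} applies with $\cA=\cG$ and gives
\[
\ed_k(\cG;p)\ge\min_{\mathcal B\text{ basis of }\fC^*}\sum_{\chi\in\mathcal B}d_\chi(\cG).
\]

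Next, using $p$-primary descent of $\band$, Lemma~\ref{lem:p-power-ranks-main} upgrades each gcd $d_\chi(\cG)$ to an attained minimum $m_\chi(\cG)$. The right-hand side above then becomes
\[
\min_{\mathcal B}\sum_{\chi\in\mathcal B}m_\chi(\cG).
\]
Proposition~\ref{prop:central-character-main}, which also relies on $p$-primary descent through Proposition~\ref{prop:split-central-subband}, identifies this quantity with $\rdim(\cG)$. Chaining these gives
\[
\rdim(\cG)\le\ed_k(\cG;p)\le\ed_k(\cG)\le\rdim(\cG),
\]
so all three quantities are equal.

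Two bookkeeping points need attention. First, the minima in Theorem~\ref{thm:central-rank-lower-bound} and Proposition~\ref{prop:central-character-main} range over the same set of bases of $\fC^*$, which holds because both use the same $\fC$. Second, each $m_\chi$ must be finite, that is, each $\Vect_\chi(\cG)$ must be nonzero; this follows from the remark after Lemma~\ref{lem:intrinsic-regular-bundle}, since the regular bundle $\mathcal R_a$ contains every character of $\fC$.

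Essentially all of the difficulty has already been handled in the earlier results. The non-neutral index theorem feeds into Theorem~\ref{thm:central-rank-lower-bound}, and the $p$-power rank lemma equates $d_\chi$ with $m_\chi$. Here the main obstacle is the hypothesis check: Theorem~\ref{thm:central-rank-lower-bound} requires the order of the geometric inertia to be invertible in $k$, which holds because $\operatorname{char}(k)\ne p$ and the inertia groups are $p$-groups.
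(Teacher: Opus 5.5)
Your proposal is correct and follows the paper's proof step for step. The paper also takes $\fC=\fC(\band)$ in Theorem~\ref{thm:central-rank-lower-bound}, uses Lemma~\ref{lem:p-power-ranks-main} to replace each $d_\chi(\cG)$ by $m_\chi(\cG)$, identifies the resulting minimum with $\rdim(\cG)$ via Proposition~\ref{prop:central-character-main}, and closes the chain with the upper bound of Proposition~\ref{prop:ed-upper-main}. Your bookkeeping checks, including that each $\Vect_\chi(\cG)$ is nonzero by the regular bundle, are conditions the paper leaves implicit.
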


\begin{proof}
Use $\fC=\fC(\band)$ in Theorem~\ref{thm:central-rank-lower-bound}. Lemma~\ref{lem:p-power-ranks-main} identifies each
$d_\chi(\cG)$ with $m_\chi(\cG)$, and
Proposition~\ref{prop:central-character-main} identifies the resulting
minimum with $\rdim(\cG)$.  Thus
\[
\ed_k(\cG;p)\ge
\min_{\mathcal B\text{ basis of }\fC^*}
\sum_{\chi\in\mathcal B}d_\chi(\cG)
=
\min_{\mathcal B}
\sum_{\chi\in\mathcal B}m_\chi(\cG).
\]
The last expression is $\rdim(\cG)$.  Hence
\[
\rdim(\cG)\le\ed_k(\cG;p).
\]
Together with Proposition~\ref{prop:ed-upper-main}, we obtain
\[
\rdim(\cG)
\le\ed_k(\cG;p)
\le\ed_k(\cG)
\le\rdim(\cG),
\]
which proves \eqref{eq:gerby-km-special}.
\end{proof}

\begin{coro}
\label{cor:constant-band}
Let $k$ contain a primitive $p$th root of unity, and let $\cG/k$ be
a finite gerbe with constant finite $p$-group band $P$.  Then, without
assuming that $\cG$ is neutral,
\[
\ed_k(\cG)=\ed_k(\cG;p)=\rdim(\cG).
\]
\end{coro}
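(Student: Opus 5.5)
This is a direct specialization of Theorem~\ref{thm:gerby-km-special}, so the plan is to check its two hypotheses for a constant band. The hypothesis $\mu_p\subset k$ is assumed in the statement. It remains to show that a constant finite $p$-group band $P$ has $p$-primary descent, in the sense of Section~\ref{sec:gerby-km}. For this we take the trivial Galois extension $k/k$, of degree $p^0=1$. Over it the band $\band$ is already isomorphic to $\operatorname{band}(P_k)$ by assumption, so it is represented by a constant $p$-group. This matches the remark in the text that ``constant bands have $p$-primary descent''.

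Next we confirm that the standing assumptions of Section~\ref{sec:gerby-km} hold.
\begin{enumerate}[label=\textup{(\roman*)}]
\item $\operatorname{char}(k)\ne p$. This follows because $k$ contains a primitive $p$th root of unity, which is impossible in characteristic $p$.
\item $\cG$ is a finite gerbe whose geometric inertia groups are $p$-groups. This holds because the geometric inertia groups are forms of $P$.
\end{enumerate}
No neutrality assumption enters anywhere in Section~\ref{sec:gerby-km}. The only gerbe-theoretic inputs are Lemma~\ref{lem:p-power-neutralization}, which itself produces a neutralizing $p$-power extension, and the non-neutral index theorem.

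Applying Theorem~\ref{thm:gerby-km-special} then gives $\ed_k(\cG)=\ed_k(\cG;p)=\rdim(\cG)$ directly. The only point I expect to need care is whether ``constant'' in the statement means that the band is isomorphic to $\operatorname{band}(P_k)$, as in Definition~\ref{def:represented-band}, rather than just a $P$-banding over some extension. With the paper's definition there is no obstacle, and the proof is a short verification of hypotheses.
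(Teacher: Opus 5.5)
Your proposal is correct and matches the paper's proof: you note that a constant band has $p$-primary descent via the trivial extension $k/k$, and then you apply Theorem~\ref{thm:gerby-km-special}. Your additional checks, that $\operatorname{char}(k)\ne p$ and that the geometric inertia groups are $p$-groups, are correct and simply make explicit hypotheses the paper leaves implicit.
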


\begin{proof}
A constant band has $p$-primary descent, with splitting extension $k/k$.
Apply Theorem~\ref{thm:gerby-km-special}.
\end{proof}

\section{Prime-to-\texorpdfstring{$p$}{p} localization and residual gerbes}
\label{sec:p-localization}

In this section, we pass to a $p$-closure of $k$ and proves
the general Karpenko--Merkurjev theorem.

Fix an algebraic closure $\bar k$, a separable closure
$k_s\subset\bar k$, and a Sylow pro-$p$ subgroup
$\Phi\subset\operatorname{Gal}(k_s/k)$.  Following
\cite[Section~2]{LMMR13}, define
\[
k^{(p)}
=
\{a\in\bar k:
a\text{ is purely inseparable over }k_s^\Phi\}.
\]
This is a $p$-closure of $k$, and it is $p$-special: it is perfect,
it is the filtered
union of finite extensions of $k$ of degree prime to $p$, and every
finite extension of $k^{(p)}$ has $p$-power degree
\cite[Lemma~2.1, Lemma~2.2]{LMMR13}.  Define
\[
\rdim_p(\cG):=\rdim(\cG_{k^{(p)}}).
\]

\begin{lemm}\label{lem:p-local-rdim}
The number $\rdim_p(\cG)$ is independent of the chosen $p$-closure
and satisfies
\[
\rdim_p(\cG)=
\min_{\substack{L/k\text{ finite}\\p\nmid[L:k]}}
\rdim(\cG_L).
\]
\end{lemm}

\begin{proof}
Let $\cE$ be a minimum rank faithful bundle over $\cG_{k^{(p)}}$. The bundle $\cE$, together with its inertia action, is of finite
presentation, so it descends to a vector bundle $\cE_L$ on $\cG_L$
for some finite subextension $L/k$ of $k^{(p)}/k$; necessarily
$p\nmid[L:k]$.  The descended representation is already faithful over
$L$: its kernel subband with repect to $\cE$ is trivial after base change to $k^{(p)}$, so it descends to the trivial group over $L$. Hence $\cE_L$ is faithful of the same rank as $\cE$.  This proves
\[
\rdim(\cG_{k^{(p)}})\ge
\min_{p\nmid[L:k]}\rdim(\cG_L).
\]

Conversely, let $L/k$ be finite of degree prime to $p$, and first
let $L_0/k$ be its maximal separable subextension.  The subgroup
$\operatorname{Gal}(k_s/L_0)$ has index $[L_0:k]$, which is prime to $p$, and
therefore contains a Sylow pro-$p$ subgroup of
$\operatorname{Gal}(k_s/k)$.
Sylow conjugacy allows us to choose the $k$-embedding
$L_0\hookrightarrow k_s$ so that
$\Phi\subset\operatorname{Gal}(k_s/L_0)$.  Thus
$L_0\subset k_s^\Phi$.  Since
$L/L_0$ is purely inseparable, the definition of $k^{(p)}$ then gives
an embedding $L\hookrightarrow k^{(p)}$.  Consequently
\[
\rdim(\cG_{k^{(p)}})\le\rdim(\cG_L).
\]
Taking the minimum over $L$ gives the reverse inequality.  The displayed
formula does not involve the choice of $\Phi$, so it also proves
independence of the $p$-closure.
\end{proof}

\begin{lemm}\label{lem:p-special-ed-invariance}
Let $\cA$ be a category fibered in groupoids over $k$ whose functor of
isomorphism classes commutes with filtered unions of fields; this holds, in
particular, for an algebraic stack locally of finite presentation.  If
$k^{(p)}$ is a $p$-closure, then
\[
\ed_k(\cA;p)=
\ed_{k^{(p)}}(\cA_{k^{(p)}};p).
\]
\end{lemm}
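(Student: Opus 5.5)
We prove the two inequalities $\ed_k(\cA;p)\ge\ed_{k^{(p)}}(\cA_{k^{(p)}};p)$ and $\ed_k(\cA;p)\le\ed_{k^{(p)}}(\cA_{k^{(p)}};p)$ separately. The first is immediate from Lemma~\ref{lem:ed-base-change} applied to $K=k^{(p)}$. The work is in the reverse inequality, and the hypothesis on filtered unions is used only there.

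For the reverse inequality, fix a field $K/k$, an object $\xi\in\cA(K)$, and a finite extension $K'/K$ of degree prime to $p$. Let $L_0$ be an intermediate field $k\subset L_0\subset K'$ to which $\xi_{K'}$ descends. We must show that the minimal transcendence degree of such an $L_0$ over $k$ is at most $\ed_{k^{(p)}}(\cA_{k^{(p)}};p)$. We reduce to objects over $k^{(p)}$-fields as follows.
\begin{enumerate}[label=\textup{(\arabic*)}]
\item Form the composite $Kk^{(p)}$. Since $k^{(p)}/k$ is algebraic and is a filtered union of finite prime-to-$p$ extensions, choose a compositum field $\tilde K$ of $K$ and $k^{(p)}$ over $k$; it is a filtered union of the fields $KL$, where $L\subset k^{(p)}$ runs over finite extensions of $k$ of degree prime to $p$.
\item Each $[KL:K]$ divides a prime-to-$p$ number, hence is prime to $p$. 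Note that $\trdeg_{k^{(p)}}(\tilde K)=\trdeg_k(K)$ and, more generally, $\trdeg_{k^{(p)}}$ of any $k^{(p)}$-field equals $\trdeg_k$ of that field.
\item Apply the definition of $\ed_{k^{(p)}}(\xi_{\tilde K};p)$. There is a prime-to-$p$ extension $\tilde K'/\tilde K$ and a subfield $k^{(p)}\subset M_0\subset\tilde K'$ with $\trdeg_{k^{(p)}}M_0\le e:=\ed_{k^{(p)}}(\cA_{k^{(p)}};p)$, together with $\zeta\in\cA(M_0)$ and an isomorphism $\zeta_{\tilde K'}\simeq\xi_{\tilde K'}$.
\end{enumerate}

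Everything in step (3) is of finite presentation. The extension $\tilde K'=\tilde K(\theta)$ is generated by a single element, or at any rate finitely many, satisfying equations with coefficients in $\tilde K$. The object $\zeta$ is defined over a finitely generated subfield. The isomorphism is a finite amount of data. Using the hypothesis that isomorphism classes commute with filtered unions, all of this descends to some stage $KL$ with $L\subset k^{(p)}$ finite of degree prime to $p$. This gives:
\begin{enumerate}[label=\textup{(\roman*)}]
\item a prime-to-$p$ extension $K''/KL$ with $K''\otimes_{KL}\tilde K\simeq\tilde K'$;
\item a field $M_1\subset K''$ containing $L$, with $M_1k^{(p)}$ equal to or containing the finitely generated part of $M_0$, so that $\trdeg_k M_1\le e$;
\item an object over $M_1$ whose pullback to $K''$ is isomorphic to $\xi_{K''}$.
\end{enumerate}
Since $[K'':K]=[K'':KL][KL:K]$ is prime to $p$, we get $\ed_k(\xi;p)\le e$. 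Taking the supremum over all $\xi$ finishes the argument.

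The main obstacle is the bookkeeping in this descent. We must ensure that the degree of $\tilde K'/\tilde K$ is preserved at a finite stage, which holds because the minimal polynomial descends and stays irreducible for large $L$, or else we just take a factor, whose degree still divides the original. We must also make sure that the isomorphism $\zeta_{\tilde K'}\simeq\xi_{\tilde K'}$ descends, not merely the objects. For the latter we use that the functor of isomorphism classes commutes with filtered unions, applied to the filtered union $\tilde K'=\bigcup K''_L$: two objects over $K''_L$ that become isomorphic over the union are already isomorphic at some finite stage.
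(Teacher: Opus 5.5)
\textbf{Gap in step (2).} Your descent bookkeeping in the second half is sound. You use both halves of the limit hypothesis correctly: every class over the union comes from a finite stage, and two classes that agree over the union already agree at a finite stage. Also, irreducibility over $\tilde K$ already gives irreducibility over $KL$, so your aside about taking a factor is unnecessary. The problem is step (2), on which your final count $[K'':K]=[K'':KL][KL:K]$ rests. For an arbitrary compositum $\tilde K$ of $K$ and $k^{(p)}$, the degree $[KL:K]$ need not divide $[L:k]$, and it need not be prime to $p$.

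Here is an example. Take $p=2$ and an $S_3$-extension $N/k$ generated by the roots $\alpha_1,\alpha_2,\alpha_3$ of an irreducible cubic. The image of the Sylow pro-$2$ subgroup $\Phi$ in $\operatorname{Gal}(N/k)\simeq S_3$ is generated by a transposition. Number the roots so that it fixes $\alpha_1$; then $L=k(\alpha_1)\subset k^{(2)}$ and $[L:k]=3$. For $K=k(\alpha_2)$ one has $K\otimes_kL\simeq K\times N$, so $N$ is a compositum of $K$ and $L$ with $[N:K]=2$. By lying-over, this maximal ideal of $K\otimes_kL$ extends to one of $K\otimes_kk^{(2)}$. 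This gives a compositum $\tilde K$ of $K$ and $k^{(2)}$ that is not a union of odd-degree extensions of $K$. With such a $\tilde K$, your argument only shows that $\xi$ is defined over a field of transcendence degree at most $e$ after an extension of even degree. That says nothing about $\ed_k(\xi;2)$.

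\textbf{What is needed.} You must show that a good compositum exists, namely one for which $\tilde K/K$ is a filtered union of finite extensions of degree prime to $p$. Either of the following works.
\begin{itemize}
\item \emph{Sylow argument.} Embed $k_s$ into a separable closure $K_s$ of $K$, and let $\Psi$ be a Sylow pro-$p$ subgroup of $\operatorname{Gal}(K_s/K)$. Its image in $\operatorname{Gal}(k_s/k)$ is pro-$p$, so it lies in some conjugate $\sigma\Phi\sigma^{-1}$. Then $\sigma$ embeds $k_s^\Phi$ into $K_s^\Psi$ over $k$, and hence embeds $k^{(p)}$ into the perfect closure of $K_s^\Psi$. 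Every finite subextension of that perfect closure over $K$ has degree prime to $p$, because $\operatorname{char}(k)\ne p$. Take $\tilde K$ inside it.
\item \emph{Compactness argument.} The dimension $\dim_K(K\otimes_kL)=[L:k]$ is prime to $p$, and it equals the sum, over the local factors, of length times residue degree. Hence some maximal ideal of $K\otimes_kL$ has residue degree prime to $p$. Contracting such an ideal to a smaller $L$ preserves this property. So the nonempty finite sets of such ideals form an inverse system over the finite $L\subset k^{(p)}$, and its nonempty limit gives a compatible choice.
\end{itemize}

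With $\tilde K$ chosen this way, your argument becomes a complete, self-contained proof. The paper instead obtains the statement by quoting the $p$-closure lemma of L\"otscher--MacDonald--Meyer--Reichstein.
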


\begin{proof}
Apply the standard $p$-closure lemma to the limit-preserving functor
\[
K\longmapsto\{\text{isomorphism classes in }\cA(K)\};
\]
see \cite[Lemma~2.3(b)]{LMMR13}.
\end{proof}

\begin{theo}\label{thm:gerby-km}
Let $\operatorname{char}(k)\ne p$, and let $\cG/k$ be a finite gerbe
whose geometric inertia groups are $p$-groups.  Then
\[
\ed_k(\cG;p)=\rdim_p(\cG).
\]
\end{theo}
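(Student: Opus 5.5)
The plan is to reduce to the $p$-special case and then invoke Theorem~\ref{thm:gerby-km-special}. Fix a $p$-closure $k^{(p)}$ of $k$ as above. By Lemma~\ref{lem:p-special-ed-invariance}, applied to $\cA=\cG$ (an algebraic stack of finite type, hence locally of finite presentation), we have
\[
\ed_k(\cG;p)=\ed_{k^{(p)}}(\cG_{k^{(p)}};p).
\]
By definition, and by Lemma~\ref{lem:p-local-rdim}, $\rdim_p(\cG)=\rdim(\cG_{k^{(p)}})$, independently of the choice of $p$-closure. It therefore suffices to prove
\[
\ed_{k^{(p)}}(\cG_{k^{(p)}};p)=\rdim(\cG_{k^{(p)}}).
\]
This is the statement of the theorem over a $p$-special base field.

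Next we check that the hypotheses of Theorem~\ref{thm:gerby-km-special} hold over $F=k^{(p)}$.
\begin{enumerate}[label=\textup{(\roman*)}]
\item $\cG_F$ is still a finite gerbe whose geometric inertia groups are $p$-groups, since geometric inertia does not change under base field extension. Also $\operatorname{char}(F)=\operatorname{char}(k)\ne p$.
\item $F$ is $p$-special, so it contains $\mu_p$, as recalled in Section~\ref{sec:gerby-km}.
\item $\operatorname{Band}(\cG_F)$ has $p$-primary descent. The band is split by some finite separable extension of $F$, and we may take its Galois closure. Every finite extension of $F$ has $p$-power degree, so this Galois extension has $p$-power degree.
\end{enumerate}
The only point needing care is (iii). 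One must confirm that a finite band is represented by a constant group after some finite separable extension. This holds because the geometric inertia is finite étale ($p$ is invertible), so the gerbe, and with it its band, become constant over a finite separable extension, by finite presentation together with neutralization over $k_s$. This was already used in Lemma~\ref{lem:semisimple-gerbe} and Lemma~\ref{lem:intrinsic-regular-bundle}.

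With these hypotheses verified, Theorem~\ref{thm:gerby-km-special} gives $\ed_F(\cG_F)=\ed_F(\cG_F;p)=\rdim(\cG_F)$, and combining with the first paragraph gives $\ed_k(\cG;p)=\rdim_p(\cG)$. I do not expect a serious obstacle, since the substantive work (the index theorem and the central subband argument) is already done. The step most likely to need care is making sure Lemma~\ref{lem:p-special-ed-invariance} applies cleanly to the stack $\cG$. Concretely, the functor of isomorphism classes of a finite gerbe must commute with filtered unions of fields; this follows from local finite presentation of $\cG$ and finiteness of its Isom schemes.
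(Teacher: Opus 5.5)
Your proposal is correct and follows the paper's proof exactly: you pass to $k^{(p)}$ via Lemma~\ref{lem:p-special-ed-invariance}, apply Theorem~\ref{thm:gerby-km-special} there, and identify $\rdim(\cG_{k^{(p)}})$ with $\rdim_p(\cG)$. Your explicit checks go beyond the paper's one-line proof but agree with it. These are that $\mu_p\subset k^{(p)}$, and that the Galois closure of a splitting field has $p$-power degree, which gives $p$-primary descent. The paper simply asserts both facts in passing in Section~\ref{sec:gerby-km}.
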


\begin{proof}
Apply Lemma~\ref{lem:p-special-ed-invariance}, followed by
Theorem~\ref{thm:gerby-km-special} over $k^{(p)}$, to obtain
\[
\ed_k(\cG;p)
=\ed_{k^{(p)}}(\cG_{k^{(p)}};p)
=\rdim(\cG_{k^{(p)}})
=\rdim_p(\cG).
\]
\end{proof}

\begin{rema}[Why localization is necessary]\label{rem:localization-necessary}
Over an arbitrary field, ordinary representation dimension cannot replace
$\rdim_p$.  Let $p=3$, assume $\mu_3\subset k$, and twist $C_3$
by a nontrivial quadratic extension $L/k$ through
\[
\operatorname{Gal}(L/k)\ra\operatorname{Aut}(C_3)\simeq C_2.
\]
Let $\fG$ be the resulting nonconstant $k$-group scheme.  No Galois
extension of $3$-power degree splits its band: such an extension has
odd degree and cannot contain $L$.
The two nontrivial geometric characters form one Galois orbit, so the
representation dimension over $k$ is $2$, whereas the quadratic extension has
degree prime to three and splits the group; hence
\[
\ed_k(B\fG;3)=\rdim_3(B\fG)=1.
\]
The two conjugate characters give rise to a faithful $2$-dimensional $k$-representation $V$ of $\fG$. The resulting action on $\mathbb P(V)\simeq\mathbb P^1$ is faithful and hence is generically free. Using an alternative description of essential dimension \cite[Definition~3.5]{ReichsteinED}; see also \cite[Section~3d, Proposition~3.13]{MerkurjevSurvey}, it follows that
\[
\ed_k(B\fG)\le \dim\mathbb P(V)=1.
\]
Consequently,
\[
\ed_k(B\fG)=\ed_k(B\fG;3)=\rdim_3(B\fG)=1
<2=\rdim(B\fG).
\]
\end{rema}

\section{Relative faithful rank and lifting problems}
\label{sec:relative-rank}

L\"otscher's fiber-dimension theorem gives, for a morphism
$\cX\to\cY$ of categories fibered in groupoids,
\begin{equation}\label{eq:lotscher-relative}
\ed_k(\cX;p)\leq \ed_k(\cY;p)+
\sup_{K/k,\,y\in\cY(K)}\ed_K(\cX_y;p)
\end{equation}
\cite[Theorem~1.1]{LotscherFiber}.  For a surjection of algebraic groups,
the fibers in this formula are the corresponding gerbes classifying liftings of torsors \cite[Example~3.4]{LotscherFiber}. Thus the supremum itself is classical;
the purpose of this section is to compute it for locally full morphisms of
finite $p$-gerbes.

For a normal subgroup $H\triangleleft G$ of a finite $p$-group, the
locally full morphism relevant below is
\[
BG\to B(G/H),
\]
induced by the quotient homomorphism $G\twoheadrightarrow G/H$.  We show
that the fiber over the generic $G/H$-torsor arising from a free open
subset of a representation realizes the corresponding fiber supremum.  L\"otscher proved this when $H$ is
central and isomorphic to a power of $\mu_p$, and explicitly left the
general normal-subgroup case open \cite[Remark~4.4]{LotscherFiber}.

\subsection{The relative faithful rank}

Let $f:\cX\to\cY$ be a morphism of finite gerbes.  Recall that
\emph{locally full} means that morphisms between objects in the image of
$f$ lift locally; see \cite[Definition~3.4]{BorneVistoliFundamental}.
For finite gerbes over a field, this is equivalent to surjectivity of the
induced homomorphism on geometric inertia groups.
Write
\[
I_{\cX/\cY}=\ker(I_{\cX}\ra f^*I_{\cY})
\]
for its relative inertia.  A vector bundle $\cE$ on $\cX$ is
\emph{$f$-faithful} if the homomorphism
\[
I_{\cX/\cY}\ra\GL(\cE)
\]
is a monomorphism.

\begin{defi}[Relative faithful rank]\label{def:relative-rank}
Set
\[
\rdim(f)=\min\{\rk(\cE):\cE\in\Vect(\cX)
                         \text{ is $f$-faithful}\}.
\]
We put $\rdim(f)=0$ when the relative inertia is trivial.  If
$k^{(p)}$ is a $p$-closure of $k$, set
\[
\rdim_p(f)=\rdim(f_{k^{(p)}}).
\]
\end{defi}

The last number is independent of the chosen $p$-closure and satisfies
\begin{equation}\label{eq:relative-p-local-minimum}
\rdim_p(f)=
\min_{\substack{L/k\text{ finite}\\p\nmid[L:k]}}\rdim(f_L).
\end{equation}
Indeed, the proof of Lemma~\ref{lem:p-local-rdim} applies verbatim to the
relative inertia action.

If $f$ is locally full, every fiber
$\cX_y=\cX\times_{\cY}\Spec K$ is a finite gerbe; this is the
surjective case of \cite[Lemma~3.3]{BorneVistoliFundamental}.  We define
\begin{equation}\label{eq:relative-ed-definition}
\ed_k(f;p)=
\sup_{\substack{K/k\\y\in\cY(K)}}\ed_K(\cX_y;p).
\end{equation}
For the structural morphism $\cX\to\Spec k$, Definitions
\ref{def:relative-rank} and \eqref{eq:relative-ed-definition} recover
$\rdim_p(\cX)$ and $\ed_k(\cX;p)$.

Relative rank does not increase under base change.  It is also
subadditive under composition: if
$\cX\xrightarrow{f}\cY\xrightarrow{g}\cZ$ are locally full, then
\[
\rdim(gf)\leq\rdim(f)+\rdim(g).
\]
To see this, take bundles $\cE_f,\cE_g$ realizing the two terms on the
right.  An element of $I_{\cX/\cZ}$ acting trivially on
$\cE_f\oplus f^*\cE_g$ maps trivially to $I_{\cY/\cZ}$, and hence
lies in $I_{\cX/\cY}$, where $f$-faithfulness of $\cE_f$ forces it to be trivial.  The same assertions
hold after $p$-localization.

\subsection{The relative central subband}

Let $\band_f$ be the normal subband of $\band_{\cX}$ corresponds to $I_{\cX/\cY}$. This is well defined: after choosing
local objects of $\cX$, the kernels of the induced homomorphisms on
inertia are carried to one another by conjugation under change of object,
so Lemma~\ref{lem:subband-descent} descends them to a normal subband,
independently of the choices.  Assume temporarily that $\mu_p\subset k$
and that $\band_{\cX}$ has $p$-primary descent.
Define the \emph{relative split central subband}
\begin{equation}\label{eq:relative-central-subband}
\fC_f=
\Split_k\bigl((\band_f\cap Z(\band_{\cX}))[p]\bigr).
\end{equation}
Thus $\fC_f\simeq(\mu_p)^c$ for some $c\geq0$.  For
$\chi\in\fC_f^*$, we use the notation $d_\chi(\cX)$ from
Definition~\ref{def:rank-ideal} and $m_\chi(\cX)$ from
\eqref{eq:block-minimum}, with the character decomposition taken with
respect to the action of $\fC_f$.

\begin{prop}
\label{prop:relative-faithfulness-criterion}
A vector bundle on $\cX$ is $f$-faithful if and only if its
restriction to $\fC_f$ is faithful.  Consequently
\begin{equation}\label{eq:relative-character-formula}
\rdim(f)=
\min_{\mathcal B\text{ basis of }\fC_f^*}
\sum_{\chi\in\mathcal B}m_\chi(\cX).
\end{equation}
\end{prop}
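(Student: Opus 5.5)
The argument follows the proof of Proposition~\ref{prop:central-character-main} closely, with $\band_f$ in place of $\band$ and $\fC_f$ in place of $\fC(\band)$. We keep the standing assumptions that $\mu_p\subset k$ and that $\band_{\cX}$ has $p$-primary descent.

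First, $\fC_f$ is a central subband of $\band_{\cX}$ contained in $\band_f$. This is because $(\band_f\cap Z(\band_{\cX}))[p]$ is a subgroup scheme of the finite étale commutative group $Z(\band_{\cX})$, so its maximal split subgroup is again central and lies in $\band_f$. Lemma~\ref{lem:central-subband-inertia} then lets $\fC_f$ act on every bundle, and the decomposition \eqref{eq:p-central-decomposition} makes sense with respect to $\fC_f$. One direction of the equivalence is immediate: since $\fC_f\subset\band_f$, i.e.\ $\fC_f\subset I_{\cX/\cY}$, an $f$-faithful bundle is faithful on $\fC_f$.

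For the converse, let $\cE$ be faithful on $\fC_f$ and consider its kernel subband $\band_{\cE}$ from Lemma~\ref{lem:kernel-subband}. Then $\band':=\band_{\cE}\cap\band_f$ is a normal subband of $\band_{\cX}$: it is an intersection of two normal subbands, and it descends by the same argument as for intersections given after Lemma~\ref{lem:subband-descent}. We must show $\band'=1$. Choose a finite Galois extension $L/k$ of $p$-power degree splitting $\band_{\cX}$ as a constant $p$-group $P$, with $N\triangleleft P$ representing $\band_f$ and $N'\triangleleft P$ representing $\band'$, where $N'\subset N$. Suppose $N'\ne1$. Since $N'$ is normal in the $p$-group $P$, the standard fact gives $W:=N'\cap Z(P)[p]\ne0$. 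The space $W$ is stable under the outer action of $\Gamma=\operatorname{Gal}(L/k)$, which is well defined on central elements, so the orbit-counting argument gives $W^\Gamma\ne0$. Now $W^\Gamma\subset (N\cap Z(P)[p])^\Gamma$, and because $\mu_p\subset k$ the $\Gamma$-fixed part of the $p$-torsion corresponds precisely to the maximal split subgroup $\fC_f$. Hence $W^\Gamma\subset N'\cap(\fC_f)_L$, which is nontrivial and acts trivially on $\cE$. This contradicts faithfulness of $\cE$ on $\fC_f$, so $\band'=1$, which means exactly that $I_{\cX/\cY}\to\GL(\cE)$ is a monomorphism.

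The main point to check carefully is the identification $\fC_f(L)=(N\cap Z(P)[p])^\Gamma$. A $\Gamma$-fixed subgroup of an elementary abelian group with trivial twisted action is a product of $\mu_p$'s because $\mu_p\subset k$. Conversely, a split subgroup is pointwise fixed. So the maximal split subgroup is the fixed subgroup; this matches the argument already used in Proposition~\ref{prop:split-central-subband}.

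The formula \eqref{eq:relative-character-formula} then follows exactly as in Proposition~\ref{prop:central-character-main}. Decompose $\cE=\bigoplus\cE_\chi$ with respect to $\fC_f$. The bundle is faithful on $\fC_f\simeq(\mu_p)^c$ exactly when the characters occurring in it span $\fC_f^*$. From a spanning set we can extract a basis and drop the remaining summands without losing $f$-faithfulness, which only lowers the rank. Each retained block has rank at least $m_\chi(\cX)$, and summing minimal blocks over a basis yields an $f$-faithful bundle. Taking the minimum over bases gives the formula. When $c=0$, the criterion shows that the relative inertia is trivial, and both sides are $0$, in agreement with the convention in Definition~\ref{def:relative-rank}.
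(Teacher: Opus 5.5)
Your proposal is correct and follows essentially the same argument as the paper. You pass to a $p$-power Galois splitting field, use that a nontrivial normal subgroup $R\cap N$ of the $p$-group $P$ meets $Z(P)[p]$, take $\Gamma$-fixed vectors to land in $\fC_f$, and then rerun the minimum-basis argument of Proposition~\ref{prop:central-character-main}. The only differences are cosmetic: you first descend $\band_{\cE}\cap\band_f$ to a normal subband over $k$, whereas the paper works directly with $R\cap N$ over the splitting field, and you spell out the identification $(\fC_f)_L=(N\cap Z(P)[p])^\Gamma$, which the paper leaves implicit.
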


\begin{proof}
Choose a Galois extension of $p$-power degree which represents the two
bands by a finite $p$-group $P$ and a normal subgroup
$N\triangleleft P$.  If $R\triangleleft P$ is the kernel of
the inertia action on a vector bundle and $R\cap N\ne1$, then
\[
(R\cap N)\cap Z(P)[p]\ne1.
\]
This nonzero $\mathbb F_p$-space is stable under the $p$-group of
descent automorphisms and therefore has a nonzero fixed vector.  Thus
$R\cap N$ meets $\fC_f$ nontrivially.  The converse is immediate.
The character decomposition and the minimum-basis argument of
Proposition~\ref{prop:central-character-main} now give
\eqref{eq:relative-character-formula}.
\end{proof}

\begin{theo}\label{thm:relative-km-general}
Let $\operatorname{char}(k)\ne p$, and let
$f:\cX\to\cY$ be a locally full morphism of finite gerbes whose geometric
inertia groups are $p$-groups.  Then
\begin{equation}\label{eq:relative-km-general}
\ed_k(f;p)=\rdim_p(f).
\end{equation}
Moreover, after base change to a $p$-closure $k^{(p)}/k$, there are an
extension $K/k^{(p)}$ and an object $y\in\cY(K)$ such that
\begin{equation}\label{eq:relative-attainment}
\ed_K(\cX_y)=\ed_K(\cX_y;p)
=\rdim(\cX_y)=\rdim_p(f).
\end{equation}
\end{theo}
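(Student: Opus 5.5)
We reduce to a $p$-closure and then prove the two inequalities separately, modelling the argument on Theorem~\ref{thm:gerby-km-special}. First we check that both sides of \eqref{eq:relative-km-general} are unchanged when we replace $k$ by $k^{(p)}$. For the right side this is the definition of $\rdim_p(f)$. For the left side, the inequality $\ed_k(f;p)\ge\ed_{k^{(p)}}(f_{k^{(p)}};p)$ follows because objects over $K/k^{(p)}$ are objects over $K/k$, and Lemma~\ref{lem:ed-base-change} compares the fiberwise invariants over $K$. For the reverse inequality, take $y\in\cY(K)$ and pass to the compositum $K\cdot k^{(p)}$, a filtered union of prime-to-$p$ extensions of $K$. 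Since $\cX_y$ is of finite presentation, $\ed_K(\cX_y;p)=\ed_{Kk^{(p)}}(\cX_{y};p)$ by the argument of Lemma~\ref{lem:p-special-ed-invariance} applied over $K$. Thus we may assume $k$ is $p$-special. Then $\mu_p\subset k$ and every finite $p$-band has $p$-primary descent, so $\fC_f\simeq(\mu_p)^c$ and Proposition~\ref{prop:relative-faithfulness-criterion} is available.

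\emph{Upper bound.} Let $\cE$ be an $f$-faithful bundle of rank $\rdim(f)$, and fix $y\in\cY(K)$. The geometric inertia of the fiber $\cX_y$ is the relative inertia $I_{\cX/\cY}$ at the corresponding points, so $\cE|_{\cX_y}$ is a faithful bundle on $\cX_y$. Proposition~\ref{prop:ed-upper-main} then gives
\[
\ed_K(\cX_y;p)\le\rdim(\cX_y)\le\rdim(f).
\]
Taking the supremum gives $\ed_k(f;p)\le\rdim(f)$.

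\emph{Lower bound and attainment.} The plan is to use a generic frame object for $\cY$. Choose a faithful bundle $\mathcal F$ on $\cY$ (Lemma~\ref{lem:intrinsic-regular-bundle}) and let $y=\eta\in\cY(K)$ be the generic point of $\operatorname{Fr}(\mathcal F)$, exactly as in Lemma~\ref{lem:generic-object-rigidification}. The fiber $\cX_\eta$ is a finite $p$-gerbe over $K$, and $\fC_f$ gives a split central subband of its band, because $\fC_f$ is central in $\band_{\cX}$ and contained in the relative band. Theorem~\ref{thm:central-rank-lower-bound}, applied to $\cX_\eta$ over $K$ with this subband, yields
\[
\ed_K(\cX_\eta;p)\ge\min_{\mathcal B}\sum_{\chi\in\mathcal B}d_\chi(\cX_\eta).
\]
The key step, which we expect to be the main obstacle, is to show that $d_\chi(\cX_\eta)\ge d_\chi(\cX)$ for every $\chi$; equality is not needed. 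We would prove this by redoing the $G$-theoretic argument of Lemma~\ref{lem:generic-central-g0-surjection} for the composite $\cX\to\cY$ in place of $\cG\to\cH$. Homotopy invariance for $\cX\times_\cY\Tot(\mathcal F^{\oplus r})$, localization to the open frame locus, and spreading out from the generic point give a surjection $G_0^\chi(\cX)\twoheadrightarrow G_0^\chi(\cX_\eta)$, compatible with ranks. Hence $I_\chi(\cX_\eta)\subset I_\chi(\cX)$, which gives the desired inequality of generators.

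Next, Lemma~\ref{lem:p-power-ranks-main} gives $d_\chi(\cX)=m_\chi(\cX)$ over the $p$-special field, and Proposition~\ref{prop:relative-faithfulness-criterion} turns $\min_{\mathcal B}\sum_{\chi\in\mathcal B}m_\chi(\cX)$ into $\rdim(f)$. Combining these gives
\[
\rdim(f)\le\ed_K(\cX_\eta;p)\le\ed_K(\cX_\eta)\le\rdim(\cX_\eta)\le\rdim(f),
\]
where the last two inequalities are Proposition~\ref{prop:ed-upper-main} and the restriction argument above. This proves \eqref{eq:relative-km-general} and the attainment statement \eqref{eq:relative-attainment} with $y=\eta$.

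One point still needs care: Theorem~\ref{thm:central-rank-lower-bound} is stated for $\cA$ over $k$, while we apply it over $K$. This is harmless, since the theorem holds over any base field of characteristic different from $p$.
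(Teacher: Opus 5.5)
Your proof is correct, but the lower bound takes a different route from the paper's.

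\textbf{The paper's route.} It rigidifies by $\fC_f$ itself. It takes the generic frame object $\eta$ of $\cH=\cX\sslash\fC_f$ and sets $y=h(\eta)$ for the induced $h:\cH\to\cY$. For the $\fC_{f,K}$-gerbe $\cA_\eta=\cX\times_{\cH,\eta}\Spec K$ it computes $\ed_K(\cA_\eta;p)=\rdim(f)$ \emph{exactly}, by combining Theorem~\ref{thm:km-elementary-computation} with the index theorem for $\cX\to\cH$. It then transfers this bound to $\cX_y$ via Lemma~\ref{lem:finite-fiber-ed}, because $\cA_\eta$ is a fiber of $\cX_y\to\cH_y$.

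\textbf{Your route.} You take the generic frame object of $\cY$ and apply Theorem~\ref{thm:central-rank-lower-bound} to the generic fiber $\cX_\eta$ over $K$. That theorem internally rigidifies again and uses a second generic frame. You then need the extra comparison $d_\chi(\cX_\eta)\ge d_\chi(\cX)$, which you get by running Lemma~\ref{lem:generic-central-g0-surjection} for $f$ instead of for a rigidification. That transposition is sound. We have $\cX\times_\cY\Tot(\mathcal V)\simeq\Tot(f^*\mathcal V)$, and since $\fC_f\subset I_{\cX/\cY}$, it acts centrally on the inertia of every $\cX\times_\cY T$, so each step respects the $\chi$-summands. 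You also correctly saw that the needed containment $I_\chi(\cX_\eta)\subset I_\chi(\cX)$ comes from the surjection, not from pullback.

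\textbf{Trade-off.} The paper's route is more economical: one generic frame and no new $G$-theoretic input. Yours gives a more canonical witness, namely the fiber over the generic object of $\cY$ for any faithful $\mathcal F$. This is the same mechanism the paper uses later for $BP'\to BP$ in Proposition~\ref{pro:generic-embedding-problem}.

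\textbf{One repair in the reduction step.} ``The compositum $K\cdot k^{(p)}$'' is not well defined. Use a $p$-closure of $K$ instead. It contains a $p$-closure of $k$, since a Sylow pro-$p$ subgroup of $\operatorname{Gal}(K_s/K)$ restricts into one of $\operatorname{Gal}(k_s/k)$, and then Lemma~\ref{lem:p-special-ed-invariance} over $K$ applies. The paper avoids this reduction altogether. It proves the upper bound directly over $k$, choosing a residue field of $K\otimes_k L$ of prime-to-$p$ degree over $K$ and invoking Theorem~\ref{thm:gerby-km} for the fiber.
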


\begin{proof}
We first work over a field $F$ containing $\mu_p$ over which
$\band_{\cX}$ has $p$-primary descent.  Put $r=\rdim(f)$ and let
$\fC_f$ be \eqref{eq:relative-central-subband}.  Rigidify by this
central subband:
\[
q:\cX\ra\cH:=\cX\!\sslash\fC_f.
\]
Since $\fC_f\subset I_{\cX/\cY}$, the subgroup $\fC_f$ acts
trivially after applying $f$.  By the universal property of
rigidification \cite[Theorem~5.1.5]{ACV}, there is a morphism
$h:\cH\to\cY$ and a $2$-isomorphism $f\simeq hq$; we use this
$2$-isomorphism to identify $f$ with $hq$.  Choose the generic frame object
$\eta:\Spec K\to\cH$ as in
Lemma~\ref{lem:generic-object-rigidification}, put
\[
\cA_\eta=\cX\times_{\cH,\eta}\Spec K,
\quad y=h(\eta).
\]
Then $\cA_\eta$ is a gerbe banded by $\fC_{f,K}$.  Moreover,
$\eta$ is an object of the fiber $\cH_y$, and
\[
\cA_\eta
=\cX_y\times_{\cH_y,\eta}\Spec K.
\]
Thus the natural morphism $\cA_\eta\to\cX_y$ is faithful: on the
inertia group of any object it is the inclusion
$\fC_{f,K}\hookrightarrow I_{\cX/\cY,K}$.  In particular,
$\cA_\eta$ is a fiber of the morphism $\cX_y\to\cH_y$.

Because $\fC_f\simeq(\mu_p)^c$ is a split finite diagonalizable
central subband of $\cX$, the hypotheses of
Theorem~\ref{thm:nonneutral-central-index} apply to the rigidification by
$\fC_f$.  Hence, for every $\chi\in\fC_f^*$,
\[
\ind\beta_\eta(\chi)=d_\chi(\cX),
\]
where the rank ideal is taken with respect to the $\fC_f$-character
summand.  Under the present hypotheses Lemma~\ref{lem:p-power-ranks-main}
shows, more generally, that every simple object of $\Vect(\cX)$ has
$p$-power rank.  Applying the last paragraph of its proof to the
character decomposition with respect to $\fC_f$, the ranks in
$\Vect_\chi(\cX)$ are nonnegative integral combinations of powers of
$p$; their greatest common divisor is therefore their least positive
value.  Thus
\[
d_\chi(\cX)=m_\chi(\cX),
\]
and consequently
\[
\ind\beta_\eta(\chi)=d_\chi(\cX)=m_\chi(\cX).
\]
Theorem~\ref{thm:km-elementary-computation} and
Proposition~\ref{prop:relative-faithfulness-criterion} therefore yield
\[
\ed_K(\cA_\eta;p)
=\min_{\mathcal B}
\sum_{\chi\in\mathcal B}\ind\beta_\eta(\chi)
=\rdim(f)=r.
\]
The same calculation gives
$\ed_K(\cA_\eta)=\rdim(\cA_\eta)=r$.
Since $\cA_\eta$ is the fiber of $\cX_y\to\cH_y$ over $\eta$,
Lemma~\ref{lem:finite-fiber-ed} gives
$\ed_K(\cA_\eta;p)\leq\ed_K(\cX_y;p)$.  On the other hand, an
$f$-faithful bundle of rank $r$ restricts to a faithful bundle on every
fiber.  Consequently
\[
r=\ed_K(\cA_\eta;p)
\leq\ed_K(\cX_y;p)
\leq\ed_K(\cX_y)
\leq\rdim(\cX_y)
\leq r,
\]
which proves \eqref{eq:relative-attainment} over $F$.  The same upper
bound on every fiber gives
$\ed_F(f;p)=\rdim(f)$.

Now let $k$ be arbitrary and set $r=\rdim_p(f)$.
By \eqref{eq:relative-p-local-minimum}, a relative faithful bundle of
rank $r$ exists after a finite extension $L/k$ of degree prime to
$p$.  Given $K/k$ and $y\in\cY(K)$, choose a residue field $M$ of
$K\otimes_kL$ whose degree over $K$ is prime to $p$.  The restricted
bundle shows that
$\rdim_p(\cX_y)\leq r$, and
Theorem~\ref{thm:gerby-km} gives
$\ed_K(\cX_y;p)\leq r$.

Now take $F=k^{(p)}$.  This field contains $\mu_p$, and every finite
$p$-band over $F$ has $p$-primary descent.  Applying the first part over
$F=k^{(p)}$ produces the fiber in \eqref{eq:relative-attainment} and
proves the reverse inequality.
\end{proof}

\subsection{Lifting gerbes for torsors}

Let
\begin{equation}\label{eq:p-group-extension}
1\ra N\ra P'
\xrightarrow{\pi}P\ra1
\end{equation}
be an exact sequence of finite constant $p$-groups.  For a $P$-torsor
$E$ over a field $K/k$, put
\[
\operatorname{Lift}_{P'}(E)
:=\Spec K\times_{BP}BP'=[E/P'].
\]
Its objects over $L/K$ are $P'$-torsors whose quotient by $N$ is
identified with $E_L$.  Thus it is the gerbe of solutions of the finite
embedding problem determined by $E$ and \eqref{eq:p-group-extension}.
This interpretation of finite gerbes is standard; see, for example,
\cite[Section~5]{BRV}.

Define
\begin{equation}\label{eq:relative-group-rank}
\rdim_p(P',N)=
\min\{\dim(V):V\in\Rep_{k^{(p)}}(P'),
                     \ V|_N\text{ is faithful}\}.
\end{equation}

\begin{coro}
\label{cor:embedding-problem}
For the extension \eqref{eq:p-group-extension},
\[
\sup_{\substack{K/k\\E\in H^1(K,P)}}
\ed_K\bigl(\operatorname{Lift}_{P'}(E);p\bigr)
=\rdim_p(P',N).
\]
Hence $\rdim_p(P',N)$ is the optimal uniform bound for the essential
$p$-dimension of the solution gerbes of this embedding problem.
\end{coro}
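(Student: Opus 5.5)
The plan is to specialize Theorem~\ref{thm:relative-km-general} to the morphism $f:BP'_k\to BP_k$ induced by $\pi$. First I check that its hypotheses hold. Both source and target are finite gerbes whose geometric inertia groups, $P'$ and $P$, are $p$-groups. The morphism is locally full, because $\pi$ is surjective on geometric inertia. Its relative inertia $I_{BP'/BP}$ is the constant group $N$ acting by conjugation, that is, the kernel of $\pi$ inside $\underline{\Aut}$ of every object.

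Next I identify the fibers. For $K/k$ and $y=E\in BP(K)$, the fiber $\cX_y=\Spec K\times_{BP}BP'$ is $\operatorname{Lift}_{P'}(E)$. Since objects of $BP$ over fields are exactly $P$-torsors, the supremum in \eqref{eq:relative-ed-definition} is exactly the left-hand side of the corollary. Hence $\ed_k(f;p)$ equals that supremum.

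Then I compute $\rdim_p(f)$. By Definition~\ref{def:relative-rank}, $\rdim_p(f)=\rdim(f_{k^{(p)}})$. On $BP'_{k^{(p)}}$, vector bundles are representations $V\in\Rep_{k^{(p)}}(P')$. Such a bundle is $f$-faithful exactly when $N\to\GL(V)$ is injective, i.e.\ when $V|_N$ is faithful. So $\rdim(f_{k^{(p)}})=\rdim_p(P',N)$ as defined in \eqref{eq:relative-group-rank}. When $N=1$ both sides are $0$: the fibers are then $\Spec K$-type trivial gerbes (points), and the convention $\rdim(f)=0$ matches.

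Combining these with \eqref{eq:relative-km-general} gives the equality. The optimality statement then follows from the attainment clause \eqref{eq:relative-attainment}, which realizes the value by an actual torsor $E=h(\eta)$ over a field $K\supset k^{(p)}$. I do not expect a real obstacle. The only point needing care is that $\ed$ of the fiber is taken over $K$ rather than $k$, and this is already built into the definition of $\ed_k(f;p)$.
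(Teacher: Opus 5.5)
Your proposal is correct and matches the paper's proof: specialize Theorem~\ref{thm:relative-km-general} to $BP'\to BP$, whose fibers are the lifting gerbes and whose relative inertia is $N$, so that $\rdim_p(f)$ becomes $\rdim_p(P',N)$. The optimality clause already follows from the equality for the supremum, so the attainment statement \eqref{eq:relative-attainment} is a bonus rather than something the corollary needs.
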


\begin{proof}
Apply Theorem~\ref{thm:relative-km-general} to
$BP'\to BP$.  Its relative inertia is $N$, and
Definition~\ref{def:relative-rank} becomes
\eqref{eq:relative-group-rank}.
\end{proof}

\begin{exam}
\label{ex:cyclic-relative-rank}
Assume $\mu_p\subset k$ and
$[k(\mu_{p^2}):k]=p$.  For
\[
1\ra C_p\ra C_{p^2}
\ra C_p\ra1,
\]
the lifting gerbe of the trivial $C_p$-torsor is $BC_p$, of faithful
rank one.  Nevertheless
\[
\rdim(BC_{p^2}\to BC_p)=p.
\]
To see the asserted relative rank, let $g$ be a generator of $C_{p^2}$.
Over a separable closure, every irreducible character sends $g$ to
$\zeta_{p^2}^a$ for some $a$.  Its restriction to the kernel
$\langle g^p\rangle\simeq C_p$ is nontrivial exactly when $p\nmid a$; in
that case the character is faithful on $C_{p^2}$ and its values generate
$k(\mu_{p^2})$.  Hence such a character has Galois orbit of size
$[k(\mu_{p^2}):k]=p$.  Any $k$-representation which is nontrivial on the
kernel must contain this whole orbit after scalar extension, and therefore
has dimension at least $p$.  Conversely, on the $p$-dimensional
$k$-vector space $k(\mu_{p^2})$, let $g$ act by multiplication by a
primitive $p^2$th root of unity.  This gives a representation which is
nontrivial on the kernel, so the lower bound is attained.
Thus the trivial fiber need not attain the maximum.  The next proposition
shows that the corresponding generic $C_p$-torsor does; in this example its lifting gerbe
has essential dimension $p$, and its cyclic-algebra obstruction has index
$p$, as in \cite[Example~4.6]{KM}.

\end{exam}

\begin{prop}
\label{pro:generic-embedding-problem}
Choose a $P$-representation $W$ with a nonempty $P$-stable open subset
$U\subset W$ on which $P$ acts freely.  Put
\[
K=k(U/P),
\qquad
E_{\mathrm{gen}}=U_K.
\]
Then
\begin{equation}\label{eq:generic-embedding-attainment}
\ed_K\bigl(\operatorname{Lift}_{P'}(E_{\mathrm{gen}});p\bigr)
=\rdim_p(P',N).
\end{equation}
In particular, $E_{\mathrm{gen}}$ realizes the supremum in
Corollary~\ref{cor:embedding-problem}.

Moreover, let $F=k^{(p)}$ be a $p$-closure of $k$, put
\[
K_F=F(U_F/P),
\]
and let $E_{\mathrm{gen},F}$ be the generic fiber of
$U_F\to U_F/P$.  Then
\begin{equation}\label{eq:generic-embedding-attainment-pclosure}
\ed_{K_F}\bigl(\operatorname{Lift}_{P'}(E_{\mathrm{gen},F})\bigr)
=
\ed_{K_F}\bigl(\operatorname{Lift}_{P'}(E_{\mathrm{gen},F});p\bigr)
=
\rdim_p(P',N).
\end{equation}
\end{prop}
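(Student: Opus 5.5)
The plan is to reduce to the relative theorem, Theorem~\ref{thm:relative-km-general}, applied to the locally full morphism $f:BP'\to BP$. The upper bound is immediate. By Corollary~\ref{cor:embedding-problem}, every lifting gerbe has essential $p$-dimension at most $\rdim_p(P',N)$. Hence it suffices to show that the generic torsor attains this value, first over the $p$-closure and then over $k$.

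I will first work over $F=k^{(p)}$, which contains $\mu_p$ and over which $BP'$ has constant, hence $p$-primary, band. Put $r=\rdim(f_F)=\rdim_p(P',N)$, and take $\fC_f\simeq(\mu_p)^c$ as in \eqref{eq:relative-central-subband}. Rigidification gives $q:BP'\to\cH=BP'\sslash\fC_f=B(P'/C)$ with $C=\fC_f(F)$, and $h:\cH\to BP$. The proof of Theorem~\ref{thm:relative-km-general} shows the following. For any $y\in BP(K)$ and any $\eta\in\cH_y(K)$ whose central fiber $\cA_\eta$ satisfies $\ind\beta_\eta(\chi)=d_\chi(BP')$ for all $\chi$, we get
\[
r=\ed_K(\cA_\eta;p)\le\ed_K(\operatorname{Lift}_{P'}(E_y);p)\le\ed_K(\operatorname{Lift}_{P'}(E_y))\le r.
\]
So the task is to produce such an $\eta$ lying over $E_{\mathrm{gen},F}$.

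The key step is a genericity argument. Choose a faithful $P'/C$-representation $W'$ and set $\tilde W=W_F\oplus W'$, viewed as a $P'/C$-representation through $P'/C\to P$ on the first factor. Let $\tilde U\subset\tilde W$ be the open set where $P'/C$ acts freely and the $W$-component lies in $U_F$. Its generic point gives an object $\tilde\eta\in\cH(F(\tilde U/(P'/C)))$. The argument of Lemma~\ref{lem:generic-central-g0-surjection} uses only spreading out, homotopy invariance and localization. It therefore works equally well for $\tilde U$, which is open in a vector bundle over $\cH$, in place of the frame bundle. This yields $\ind\beta_{\tilde\eta}(\chi)=d_\chi$.

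Now fiber over $K_F=F(U_F/P)$. The field $F(\tilde U/(P'/C))$ is the function field of the fibre of $\tilde U/(P'/C)\to U_F/P$ over its generic point. That fibre is $\cH_{E_{\mathrm{gen}}}$-twisted affine space $(E_{\mathrm{gen},F}\times W')/\ker$, of dimension $\dim W'$. So $\tilde\eta$ is an object of $\cH_{y}$ with $y=E_{\mathrm{gen},F}$, defined over a purely transcendental-type extension $K'$ of $K_F$, and the fibres of $\operatorname{Lift}$ over $K'$ are the base change of those over $K_F$.

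Lemma~\ref{lem:ed-base-change} gives $\ed_{K_F}(\operatorname{Lift}(E_{\mathrm{gen},F});p)\ge\ed_{K'}(\operatorname{Lift}(E_{\mathrm{gen},F})_{K'};p)$. Combined with the displayed chain over $K'$, this gives a lower bound of $r$, while the upper bound $\rdim\le r$ still holds. This proves \eqref{eq:generic-embedding-attainment-pclosure}.

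The statement over $k$ then follows. Since $K_F$ is a filtered union of prime-to-$p$ extensions of $K=k(U/P)$, Lemma~\ref{lem:p-special-ed-invariance} gives $\ed_K(\operatorname{Lift}(E_{\mathrm{gen}});p)=\ed_{K_F}(\cdots;p)$. Here I use that $E_{\mathrm{gen}}$ pulls back to $E_{\mathrm{gen},F}$ and that $K\otimes_kF$ is the function field $K_F$, a $p$-closure-type extension of $K$. Together with the upper bound, this proves \eqref{eq:generic-embedding-attainment}.

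The main obstacle is the genericity step: showing that the generic object over $\tilde U$ has maximal central indices. I would handle it by rerunning the $G_0$-surjection argument with $\tilde U$ in place of $\operatorname{Fr}(\mathcal F)$.
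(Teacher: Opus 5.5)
Your argument is correct, but for the $p$-closure statement it takes a genuinely different route from the paper.

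\textbf{The paper's route.} The paper works with the lifting gerbe $\cL=[E_{\mathrm{gen}}/P']$ itself.
\begin{enumerate}
\item It proves $G_0^\chi(BP')\twoheadrightarrow G_0^\chi(\cL)$ through $[W/P']\supset[U/P']$, so $I_\chi(\cL)=I_\chi(BP')$.
\item Since $k(U)/K$ is $P$-Galois and neutralizes $\cL$, the kernel of any bundle on $\cL$ is a subgroup of $N$ that is normal in $P'$. Hence faithfulness on $\cL$ and relative faithfulness for $BP'\to BP$ are both detected on $C=N\cap Z(P')[p]$.
\item The two minimum-basis formulas then give $\rdim(\cL)=\rdim_p(P',N)$.
\item Finally it applies the absolute Theorem~\ref{thm:gerby-km-special} to $\cL$, whose band has $p$-primary descent.
\end{enumerate}

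\textbf{Your route.} You never analyze the band of $\cL$. Instead you do the following.
\begin{enumerate}
\item From $W_F\oplus W'$ you build a generic object $\tilde\eta$ of $\cH=B(P'/C)$ over an auxiliary field $K'\supset K_F$.
\item You show it lies over $E_{\mathrm{gen},F}\otimes_{K_F}K'$. Two facts make this work: $N/C$ acts trivially on $W$, so the pushforward of the generic $P'/C$-torsor along $P'/C\to P$ maps $P$-equivariantly to $U_F$; and $\tilde U/(P'/C)\to U_F/P$ is dominant.
\item The same $G_0$ argument, applied to the $C$-gerbe $\cA_{\tilde\eta}$, gives it maximal central indices.
\item You then rerun the chain from the proof of Theorem~\ref{thm:relative-km-general} over $K'$ and come back down to $K_F$ by Lemma~\ref{lem:ed-base-change}.
\end{enumerate}

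\textbf{What each buys.} Your route shows more generally that any object of the rigidification which is generic over a given $y$ witnesses attainment. It also bypasses the normal-subgroup faithfulness criterion for $\cL$, and $\rdim(\cL)=\rdim_p(P',N)$ falls out of the sandwich as a byproduct. The paper's route is intrinsic to $\cL$: it identifies $I_\chi(\cL)$ and $\rdim(\cL)$ directly, and it needs no auxiliary extension $K'/K_F$ at the level of the proposition, although one is hidden inside Theorem~\ref{thm:gerby-km-special}.

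\textbf{A minor correction in your final step.} $K_F$ is a filtered union of prime-to-$p$ extensions of $K$, but it is not $p$-special. So Lemma~\ref{lem:p-special-ed-invariance} as stated does not apply, and your claimed equality is not justified by the cited result. You only need the inequality $\ed_K(\cdot\,;p)\ge\ed_{K_F}(\cdot\,;p)$, which is Lemma~\ref{lem:ed-base-change}, together with the upper bound from Corollary~\ref{cor:embedding-problem}. This is exactly how the paper concludes.
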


\begin{proof}
We first prove the stronger assertion after passage to the $p$-closure.
Thus replace $k$ by $F=k^{(p)}$.  Then $k$ is $p$-special, hence
$\mu_p\subset k$.  After this replacement write again
\[
K=k(U/P),
\qquad
E_{\mathrm{gen}}=U_K,
\]
and set
\[
\cL=[E_{\mathrm{gen}}/P'],
\quad C=N\cap Z(P')[p].
\]
For $\chi\in C^*$, restriction induces a surjection
\begin{equation}\label{eq:generic-lifting-g0}
G_0^\chi(BP')\ra G_0^\chi(\cL).
\end{equation}
Indeed, it factors as
\[
G_0^\chi(BP')
\xrightarrow{\sim}G_0^\chi([W/P'])
\ra G_0^\chi([U/P'])
\ra G_0^\chi(\cL).
\]
The first arrow is homotopy invariance and the second is localization.
For the third, spread a coherent sheaf from the generic fiber over the
inverse image of some nonempty open subset of $U/P$, and apply localization,
exactly as in the proof of
Lemma~\ref{lem:generic-central-g0-surjection}.  Since restriction preserves rank,
\eqref{eq:generic-lifting-g0} gives
\begin{equation}\label{eq:generic-lifting-rank-ideals}
I_\chi(BP')=I_\chi(\cL).
\end{equation}

The extension $k(U)/K$ is $P$-Galois and neutralizes $\cL$.  Hence
the descent action on its geometric band $N$ contains the full
conjugation action of $P=P'/N$.  The kernel of a vector bundle on
$\cL$ is therefore represented by a subgroup of $N$ normal in
$P'$.  Every nontrivial such subgroup meets $C$.  Thus faithfulness on
$\cL$, and relative faithfulness for $BP'\to BP$, are both equivalent
to faithfulness on $C$.

In both character categories, every simple object has rank a power of $p$; hence the associated rank ideals are realized by least blocks, as in the proof of Lemma~\ref{lem:p-power-ranks-main}.  Using Equation \eqref{eq:generic-lifting-rank-ideals} together with the two minimum basis formulas, we obtain
\[
\rdim(\cL)=\rdim(BP'\to BP)=\rdim_p(P',N).
\]
Moreover, since the band of $\cL$ admits $p$-primary descent, Theorem~\ref{thm:gerby-km-special} yields
\[
\ed_K(\cL)=\ed_K(\cL;p)=\rdim(\cL)=\rdim_p(P',N).
\]
Thus \eqref{eq:generic-embedding-attainment-pclosure} follows.

We now return to the original base field $k$.  Put
\[
\cL_0=\operatorname{Lift}_{P'}(E_{\mathrm{gen}}).
\]
After base change to $F=k^{(p)}$, the generic torsor
$E_{\mathrm{gen}}$ becomes $E_{\mathrm{gen},F}$ over $K_F$, and hence
\[
(\cL_0)_{K_F}\simeq
\operatorname{Lift}_{P'}(E_{\mathrm{gen},F}).
\]
By Lemma~\ref{lem:ed-base-change} and the equality just proved,
\[
\ed_K(\cL_0;p)
\ge
\ed_{K_F}\bigl((\cL_0)_{K_F};p\bigr)
=
\rdim_p(P',N).
\]
On the other hand, Corollary~\ref{cor:embedding-problem} gives
\[
\ed_K(\cL_0;p)\le \rdim_p(P',N).
\]
Therefore
\[
\ed_K\bigl(\operatorname{Lift}_{P'}(E_{\mathrm{gen}});p\bigr)
=\rdim_p(P',N),
\]
which is \eqref{eq:generic-embedding-attainment}.  The assertion that
$E_{\mathrm{gen}}$ realizes the supremum now follows from
Corollary~\ref{cor:embedding-problem}.
\end{proof}

\begin{rema}[Relation to L\"otscher's question]
L\"otscher asked whether, for a normal subgroup $C\triangleleft G$, the
largest essential $p$-dimension of the lifting gerbes attached to
$G/C$-torsors is already realized by the generic torsor obtained from a
free open subset of a $G/C$-representation; he proved this
when $C$ is central and isomorphic to a power of $\mu_p$
\cite[Remark~4.4]{LotscherFiber}.  Taking $G=P'$ and $C=N$, the preceding
proposition gives an affirmative answer for arbitrary normal subgroups of finite
$p$-groups.
\end{rema}

\section{Compression and quotient singularities}\label{sec:compression-singularities}

For an algebraic group scheme, essential dimension has a classical
interpretation in terms of compression.  If $\fG$ acts generically freely on
an integral variety $Y$, a $\fG$-compression of $Y$ is a dominant
$\fG$-equivariant rational map
\[
Y\dashrightarrow X
\]
to another generically free $\fG$-variety.  Starting from a generically free
linear representation, the essential dimension of $\fG$ is the minimum of
$\dim(X)-\dim(\fG)$ among such compressions; for finite groups this is simply
the minimum of $\dim(X)$.  This is the original geometric point of view on
essential dimension; see \cite{BuhlerReichstein,ReichsteinED}.  We have
already used it in Remark~\ref{rem:localization-necessary}, where the rational
map from a faithful representation to its projectivization gives a
one dimensional compression.

This description depends on having a group scheme acting on a variety.  But for a
non-neutral gerbe $\cG$ there is no direct analogue. The purpose of this section is to describe a
local substitute.  Bresciani and Vistoli associate to a tame quotient
singularity $(S,s)$ a finite gerbe $\cG_{(S,s)}$, called its
\emph{fundamental gerbe}: it is the residual gerbe over $s$ in the minimal
stack $\widehat S\to S$ \cite[Proposition~6.1 and Definition~6.2]{Bresciani_Vistoli_2024}.
We shall measure how small a quotient singularity can be while retaining a
prescribed gerbe as its fundamental gerbe.

\subsection{Quotient-compression dimension}

Recall that, we say a variety of finite type over $k$ has tame quotient singularity, if \'etale locally, it is isomorphic to $U/G$, where $U$ is a smooth scheme and $G$ is a finite group of order invertible in $k$. We refer to \cite[\S 6]{Bresciani_Vistoli_2024} for a general discussion.

Let $\cG/k$ be a finite gerbe whose geometric inertia has order invertible in
$k$. We call a vector bundle $\cE$ on $\cG$ \emph{small} if no nontrivial geometric inertia element acts as
a pseudoreflection. Put
\[
\operatorname{srdim}_k(\cG)
=
\min\{\rk(\cE):\cE\text{ is faithful and small}\}.
\]
The following construction is the bridge from vector bundles on a gerbe to
quotient singularities.

\begin{prop}\label{prop:gerbe-to-quotient-singularity}
Let $\cG/k$ be as above and let $\cE$ be a faithful small vector bundle of
rank $d$.  Let
\[
\pi:\cE\rightarrow \bfE
\]
be its coarse moduli morphism.  Then $\bfE$ is an affine $k$-scheme, the zero
section induces a point $0\in\bfE(k)$, and $(\bfE,0)$ is a $d$-dimensional
tame quotient singularity.  Moreover,
\[
\widehat{\bfE}\simeq \cE,
\quad
\cG\simeq\cG_{(\bfE,0)},
\]
where the second isomorphism is induced by the zero section.
\end{prop}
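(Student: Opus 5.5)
The plan is to reduce everything to an étale-local computation, after a finite separable extension, where $\cG\simeq BP$ and $\cE$ becomes the quotient stack $[V/P]$ for a faithful small representation $V$. The coarse space statements are then Chevalley--Shephard--Todd type facts, and the gerbe statements come from the Bresciani--Vistoli characterization of the minimal stack.

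\emph{Step 1: the coarse space.} The total space $\cE=\Tot(\cE)\to\cG$ is a tame Deligne--Mumford stack with finite inertia, since the inertia order is invertible in $k$. Keel--Mori, in the tame form of Abramovich--Olsson--Vistoli, gives a coarse moduli space $\pi:\cE\to\bfE$. This space commutes with flat base change, in particular with $k'/k$. Over a finite separable $k'$ neutralizing $\cG$ and making the inertia constant, we have $\cE_{k'}\simeq[V/P]$ and $\bfE_{k'}\simeq V/P=\Spec k'[V]^P$. This scheme is affine of finite type and of dimension $d$. Affineness and finite type descend along the faithfully flat map $\Spec k'\to\Spec k$, so $\bfE$ is affine. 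The zero section $\cG\to\cE$ composed with $\pi$ factors through $\Spec k$ because $\cG$ has coarse space $\Spec k$; this gives $0\in\bfE(k)$, the image of the origin. Since $V/P$ is a quotient of a smooth scheme by a finite group of invertible order, $(\bfE,0)$ is a $d$-dimensional tame quotient singularity.

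\emph{Step 2: identify $\widehat{\bfE}$ with $\cE$.} By \cite[\S 6]{Bresciani_Vistoli_2024}, the minimal stack $\widehat S\to S$ is characterized, étale locally, as the unique smooth tame stack with trivial generic stabilizer mapping to $S$ as a coarse space and isomorphic in codimension one, i.e.\ with no stabilizers in codimension one. The morphism $\pi$ is a coarse moduli map, and $\cE$ is smooth and tame. Faithfulness of $\cE$ makes the generic stabilizer trivial; this is the free locus from Proposition~\ref{prop:ed-upper-main}. Smallness means no element of $P$ fixes a hyperplane, so the non-free locus has codimension $\ge2$ and $\pi$ is an isomorphism in codimension one. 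The uniqueness part of the Bresciani--Vistoli construction then gives $\widehat{\bfE}\simeq\cE$ over $\bfE$. One must check these properties over $k$ rather than over $k'$. They are étale-local, or descend along $k'/k$, and the canonical isomorphism from uniqueness descends too.

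\emph{Step 3: the fundamental gerbe.} The residual gerbe of $\cE$ over $0$ is the reduced closed substack lying over $0$. After base change to $k'$ this is $[\{0\}/P]=BP$, which is exactly the image of the zero section. Hence the zero section $\cG\to\cE$ is a closed immersion onto the reduced fiber over $0$, and it identifies $\cG$ with the residual gerbe. Combined with Step 2, this gives $\cG\simeq\cG_{(\bfE,0)}$.

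The main obstacle is Step 2: pinning down precisely which characterization of the minimal stack from \cite{Bresciani_Vistoli_2024} to invoke, and verifying its hypotheses over a non-closed field without a global group. I would first try to reduce to the local statement $\widehat{V/P}=[V/P]$ for small $P$. This is the classical case, and it can be checked after étale base change. I would then descend using canonicity of $\widehat S$, since minimal stacks are compatible with étale and field base change.
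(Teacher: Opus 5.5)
Your proposal is correct and follows the paper's proof essentially step by step. Both arguments pass to a separable extension over which $\cG\simeq BP$ and $\cE\simeq[V/P]$, and descend affineness of the coarse space $V/P$. Both then use smallness to get $[V/P]\simeq\widehat{V/P}$, which is precisely the local statement \cite[Lemma~6.3]{Bresciani_Vistoli_2024} that the paper cites, and combine it with base-change compatibility of the minimal stack to obtain $\widehat{\bfE}\simeq\cE$. Finally, both identify the reduced fiber over $0$ with $[0/P]=BP$. The paper also states explicitly the one point you leave implicit: this fiber is set-theoretically the origin because $V\to V/P$ separates geometric orbits.
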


\begin{proof}
After passing to a separable closure
$k_s/k$, choose an object $\bar x\in\cG(k_s)$ and put
$P=\Aut(\bar x)$.  Let $V$ be the fiber of $\cE$ at $\bar x$.  Then $V$ is a
$d$-dimensional representation of $P$, and
\[
\cG_{k_s}\simeq BP,
\quad
\cE_{k_s}\simeq[V/P].
\]
The representation $V$ is faithful, hence the generic stabilizer of
$[V/P]$ is trivial.  Its coarse moduli space is the affine quotient $V/P$.
Formation of coarse moduli spaces for tame stacks commutes with extension of
the base field and since affineness descends fpqc-locally, $\bfE$ is an affine $k$-scheme. The zero section $\cG\hookrightarrow\cE$ induces a rational point $0\in\bfE(k)$ and the $d$-dimensional scheme $\bfE$ has tame quotient singularities by
\cite[Proposition~6.1]{Bresciani_Vistoli_2024}.

It remains to identify the minimal stack and its residual gerbe.  The
smallness assumption says that $P\subseteq\GL(V)$ contains no
pseudoreflections. So we have
\[
[V/P]\simeq\widehat{(V/P)};
\]
by \cite[Lemma~6.3]{Bresciani_Vistoli_2024}. Again, using the fact that formation of the minimal
stack commutes with base change, the canonical morphism
$\cE\to\widehat{\bfE}$ induced by the universal property of minimal resolution consequently becomes an isomorphism after base change
to $k_s$, and hence is an isomorphism over $k$.

Finally,
\[
\cG_{(\bfE,0)}
=
\bigl(\Spec k\times_{\bfE}\widehat{\bfE}\bigr)_{\mathrm{red}}.
\]
Via $\widehat{\bfE}\simeq\cE$, the zero section induces a morphism
$\cG\to\cG_{(\bfE,0)}$.  After base change to $k_s$, this is
\[
BP\longrightarrow
\bigl(\{0\}\times_{V/P}[V/P]\bigr)_{\mathrm{red}}.
\]
For a finite linear group, the quotient $V\to V/P$ separates geometric
orbits, so the reduced fiber over the image of the origin is $\{0\}$.  The
target is therefore $[0/P]=BP$.  Thus the displayed morphism is an
isomorphism after base change to $k_s$, and hence
$\cG\simeq\cG_{(\bfE,0)}$ over $k$.
\end{proof}

\begin{defi}\label{def:qcdim}
Let $\cG/k$ be a finite gerbe whose geometric inertia has order invertible in
$k$.  Its \emph{quotient-compression dimension} is
\[
\operatorname{qcdim}_k(\cG)
=
\min\left\{\dim S:\
\begin{array}{l}
(S,s)\text{ is a tame quotient singularity over }k,\\
\cG_{(S,s)}\simeq\cG
\end{array}
\right\}.
\]
If no such realization exists, we set $\operatorname{qcdim}_k(\cG)=\infty$.
\end{defi}

Thus $\operatorname{qcdim}$ is a local realization invariant.  In the neutral
case $\cG=BG$, it asks for the smallest dimension of a quotient singularity
whose local structure retains the whole group $G$; for a non-neutral
gerbe the definition is intrinsic and does not require a global group scheme band.

\subsection{Faithful rank and essential dimension}

The preceding construction gives one inequality between quotient-compression
dimension and small faithful rank.  The reverse inequality is supplied by the
normal bundle of the fundamental gerbe.

\begin{prop}\label{prop:qcdim-small-rank}
For every finite gerbe $\cG/k$ whose geometric inertia has order invertible in
$k$,
\[
\operatorname{qcdim}_k(\cG)=\operatorname{srdim}_k(\cG).
\]
In particular, every such gerbe is the fundamental gerbe of a tame quotient
singularity.
\end{prop}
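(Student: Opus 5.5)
We will prove the two inequalities separately and then deduce the existence statement.

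The inequality $\operatorname{qcdim}_k(\cG)\le\operatorname{srdim}_k(\cG)$ is immediate from Proposition~\ref{prop:gerbe-to-quotient-singularity}. If $\cE$ is a faithful small vector bundle of rank $d$, then the coarse space $(\bfE,0)$ is a $d$-dimensional tame quotient singularity with $\cG_{(\bfE,0)}\simeq\cG$.

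For the existence statement we also need $\operatorname{srdim}_k(\cG)<\infty$. Take the faithful bundle $\mathcal R_a$ of Lemma~\ref{lem:intrinsic-regular-bundle} and replace it by $\mathcal R_a^{\oplus 2}$. Geometrically this is a sum of at least two copies of the regular representation. A nontrivial element $g$ acts on $\mathcal O(P)$ by permuting basis vectors with no fixed points. Its fixed space therefore has codimension at least $|P|-|P|/\operatorname{ord}(g)\ge |P|/2\ge 1$ in each copy. In the doubled sum the codimension is at least $2$, so $g$ is not a pseudoreflection. Hence $\mathcal R_a^{\oplus2}$ is faithful and small, and every such gerbe is realized.

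For the reverse inequality $\operatorname{srdim}_k(\cG)\le\operatorname{qcdim}_k(\cG)$, let $(S,s)$ be a tame quotient singularity of dimension $d$ with $\cG_{(S,s)}\simeq\cG$. Let $\widehat S\to S$ be the minimal stack. By \cite[\S6]{Bresciani_Vistoli_2024}, $\widehat S$ is a smooth tame stack with trivial generic stabilizer, and it has no pseudoreflections in its stabilizers. The fundamental gerbe $\cG\hookrightarrow\widehat S$ is a closed substack: it is the reduced fiber over $s$. Since $\cG$ is smooth over $k$, hence zero-dimensional and regular, the embedding is a regular immersion into the smooth $d$-dimensional stack. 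Its normal bundle $\cN=\cN_{\cG/\widehat S}$ is then a vector bundle on $\cG$ of rank $d-\dim\cG=d$. We claim that $\cN$ is faithful and small; this gives $\operatorname{srdim}_k(\cG)\le d$.

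To check the claim we may pass to $k_s$, where formation of the minimal stack and of the residual gerbe commutes with base change. Étale locally at $s$ we have $S\simeq V'/G$ and $\widehat S\simeq[V'/P]$. Here $V'$ is smooth, $P$ is the stabilizer of a point $v$ over $s$, and $P$ acts on $V'$ with no pseudoreflections. The normal bundle at $BP$ becomes the tangent representation $T_vV'$. By linearization for a linearly reductive finite group of invertible order (Luna/Cartan), $P$ acts on the complete local ring as on $T_vV'$. Consequently faithfulness on $V'$, coming from trivial generic stabilizer, is equivalent to faithfulness on $T_vV'$. Similarly, an element is a pseudoreflection on $T_vV'$ exactly when its fixed locus in $V'$ has codimension one near $v$. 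The absence of pseudoreflections in the minimal stack therefore gives smallness of $\cN$.

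The main obstacle is this last identification. One must justify carefully, using the precise characterization of the minimal stack in \cite{Bresciani_Vistoli_2024}, that the stabilizers of $\widehat S$ contain no pseudoreflections in the tangent sense. One must also check that the residual gerbe's embedding is the reduced closed substack whose normal bundle is the tangent representation, with no nilpotent thickening spoiling the rank count.
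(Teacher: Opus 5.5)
Your proposal is correct and follows the paper's proof: Proposition~\ref{prop:gerbe-to-quotient-singularity} gives $\operatorname{qcdim}_k(\cG)\le\operatorname{srdim}_k(\cG)$, and the normal bundle $N_{\cG/\widehat S}$ of the fundamental gerbe in the minimal stack, which is faithful and has no pseudoreflections, gives the reverse inequality; your regular-immersion and linearization remarks spell out what the paper simply takes from \cite[Corollary~6.4]{Bresciani_Vistoli_2024}. The only deviation is the finiteness step, where you use $\mathcal R_a^{\oplus 2}$ (valid by your permutation count) while the paper uses $\cE\oplus\det(\cE)$ for a faithful $\cE$; the paper's choice also gives the sharper bound $\operatorname{srdim}_k(\cG)\le\rdim(\cG)+1$, which it reuses in Corollary~\ref{cor:qcdim-rdim}.
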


\begin{proof}
Proposition~\ref{prop:gerbe-to-quotient-singularity} gives
$\operatorname{qcdim}_k(\cG)\le\operatorname{srdim}_k(\cG)$.
Conversely, suppose that $(S,s)$ is a $d$-dimensional tame quotient
singularity with fundamental gerbe $\cG$.  Let $\widehat S\to S$ be its
minimal stack.  The normal bundle
\[
N_{\cG/\widehat S}
\]
has rank $d$, and after passage to a separable closure it is the linear
representation appearing in the local description of the singularity.  That
representation is faithful and has no pseudoreflections; see
\cite[Corollary~6.4 and Section~6.4]{Bresciani_Vistoli_2024}.  Thus
$N_{\cG/\widehat S}$ is a faithful small vector bundle and
$\operatorname{srdim}_k(\cG)\le d$.

Finally, finite gerbes admit faithful vector bundles.  If $\cE$ is faithful,
then
\[
\cE\oplus\det(\cE)
\]
is faithful and small.  Indeed, if an inertia element is already nontrivial in
at least two directions on $\cE$, there is nothing to prove; if it is a
pseudoreflection, with eigenvalues $1,\ldots,1,\lambda$ and $\lambda\ne1$,
then it also acts nontrivially by $\lambda$ on $\det(\cE)$.  Hence the minimum
above is finite, and the claimed realization follows.
\end{proof}

The determinant argument also compares this local compression invariant with
the faithful rank studied throughout the paper.

\begin{coro}\label{cor:qcdim-rdim}
For every finite gerbe $\cG/k$ whose geometric inertia has order invertible in
$k$,
\[
\rdim(\cG)
\le
\operatorname{qcdim}_k(\cG)
\le
\rdim(\cG)+1.
\]
\end{coro}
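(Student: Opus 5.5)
The plan is to combine Proposition~\ref{prop:qcdim-small-rank}, which identifies $\operatorname{qcdim}_k(\cG)$ with $\operatorname{srdim}_k(\cG)$, with two elementary comparisons between small faithful bundles and arbitrary faithful bundles. Both inequalities will then be statements about $\operatorname{srdim}_k(\cG)$ versus $\rdim(\cG)$, proved directly in $\Vect(\cG)$.

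For the lower bound, I note that a faithful small bundle is in particular faithful. The set over which $\operatorname{srdim}_k(\cG)$ minimizes is therefore a subset of the set defining $\rdim(\cG)$, so $\rdim(\cG)\le\operatorname{srdim}_k(\cG)=\operatorname{qcdim}_k(\cG)$. Here I need that the faithful bundles form a nonempty set, so that both minima make sense. This follows from Lemma~\ref{lem:intrinsic-regular-bundle}.

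For the upper bound, I choose a faithful $\cE$ of rank $\rdim(\cG)$ and use the determinant construction from the proof of Proposition~\ref{prop:qcdim-small-rank}. The bundle $\cE\oplus\det(\cE)$ has rank $\rdim(\cG)+1$, and it is faithful because $\cE$ is a direct summand. It is small by the following check, done after base change to an algebraic closure, where inertia is a finite constant group acting semisimply because its order is invertible. Let $g\ne1$. If $g$ moves at least two eigendirections of $\cE_{\bar x}$, then it is not a pseudoreflection on the sum. If $g$ acts on $\cE_{\bar x}$ as a pseudoreflection with eigenvalue $\lambda\ne1$, then it acts on $\det$ by $\lambda$, so it has at least two nontrivial eigenvalues on the sum. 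The case where $g$ acts trivially on $\cE_{\bar x}$ cannot occur, since $\cE$ is faithful. Hence $\operatorname{srdim}_k(\cG)\le\rdim(\cG)+1$.

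There is no real obstacle here. The only point needing care is that, in the eigenvalue argument, "pseudoreflection" should be read as "fixes a hyperplane pointwise" and not as a diagonalizable reflection. This makes no difference, because the finite-order inertia elements act semisimply when the order is invertible, and so the eigenvalue description above is valid.
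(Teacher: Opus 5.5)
Your proposal is correct and follows the paper's own route. The corollary comes from Proposition~\ref{prop:qcdim-small-rank} ($\operatorname{qcdim}_k=\operatorname{srdim}_k$), the observation that faithful small bundles are in particular faithful, and the $\cE\oplus\det(\cE)$ construction applied to a faithful bundle of minimal rank. Your remark that geometric inertia elements are diagonalizable because their order is invertible makes explicit the eigenvalue step, which the paper leaves implicit.
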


The possible extra dimension has a concrete meaning: ordinary faithful rank
does not see pseudoreflections, while the minimal stack construction removes
the inertia generated by pseudoreflections. Thus a smallest faithful bundle
need not itself preserve the whole gerbe as the fundamental gerbe of the
coarse moduli space. Adding its determinant removes this obstruction at the cost
of at most one dimension.

To compare with essential dimension at $p$, define the prime local version by
\[
\operatorname{qcdim}_p(\cG)
=
\min_{\substack{L/k\ \mathrm{finite}\\ p\nmid[L:k]}}
\operatorname{qcdim}_L(\cG_L).
\]
The construction above commutes with base change, so Corollary~\ref{cor:qcdim-rdim}
gives
\[
\rdim_p(\cG)
\le
\operatorname{qcdim}_p(\cG)
\le
\rdim_p(\cG)+1.
\]
Combining this with Theorem~\ref{thm:gerby-km} yields the promised
compression interpretation of the main theorem.

\begin{coro}\label{thm:qcdim-ed}
Let $p\ne\operatorname{char}(k)$ and let $\cG/k$ be a finite gerbe whose
geometric inertia groups are $p$-groups.  Then
\[
\ed_k(\cG;p)
\le
\operatorname{qcdim}_p(\cG)
\le
\ed_k(\cG;p)+1.
\]
\end{coro}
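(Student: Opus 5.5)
The plan is to combine the prime-local sandwich for quotient-compression dimension with Theorem~\ref{thm:gerby-km}. First I will make precise the claim that Corollary~\ref{cor:qcdim-rdim} localizes. Let $L/k$ be any finite extension with $p\nmid[L:k]$. Then $\cG_L$ is again a finite gerbe whose geometric inertia groups are $p$-groups, so their order is invertible in $L$. Corollary~\ref{cor:qcdim-rdim} applied over $L$ gives
\[
\rdim(\cG_L)\le\operatorname{qcdim}_L(\cG_L)\le\rdim(\cG_L)+1 .
\]
Taking the minimum over all such $L$ term by term preserves both inequalities, since $\min(a_L)\le\min(b_L)$ whenever $a_L\le b_L$ for every $L$. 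Lemma~\ref{lem:p-local-rdim} identifies the outer minima with $\rdim_p(\cG)$, giving
\[
\rdim_p(\cG)\le\operatorname{qcdim}_p(\cG)\le\rdim_p(\cG)+1 .
\]
Here I should check that Corollary~\ref{cor:qcdim-rdim} really follows from Proposition~\ref{prop:qcdim-small-rank}. Every faithful small bundle is faithful, which gives $\rdim\le\operatorname{srdim}$. The bundle $\cE\oplus\det(\cE)$, for $\cE$ faithful of minimal rank, gives $\operatorname{srdim}\le\rdim+1$. Both arguments are insensitive to the base field, so they apply verbatim over each $L$.

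The second step substitutes $\rdim_p(\cG)=\ed_k(\cG;p)$ from Theorem~\ref{thm:gerby-km} into both ends of the localized sandwich, which yields the stated inequalities. No base change of essential dimension is needed, because Theorem~\ref{thm:gerby-km} is already phrased over $k$ with the prime-to-$p$ minimum built in.

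The only point needing care is that $\operatorname{qcdim}_p$ is defined by a minimum over finite $L$, not over a $p$-closure. So I must not use $\rdim(\cG_{k^{(p)}})$ directly, but rather the finite-extension formula of Lemma~\ref{lem:p-local-rdim}, and apply the minima to the two sides separately. Finiteness of $\operatorname{qcdim}_L(\cG_L)$ is guaranteed by Proposition~\ref{prop:qcdim-small-rank}, so all minima exist. I do not expect any genuine obstacle beyond this bookkeeping.
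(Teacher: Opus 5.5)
Your proposal is correct and follows the paper's proof. The paper likewise localizes Corollary~\ref{cor:qcdim-rdim} to obtain $\rdim_p(\cG)\le\operatorname{qcdim}_p(\cG)\le\rdim_p(\cG)+1$ and then substitutes $\ed_k(\cG;p)=\rdim_p(\cG)$ from Theorem~\ref{thm:gerby-km}; your termwise minimum over finite prime-to-$p$ extensions, using the formula of Lemma~\ref{lem:p-local-rdim}, just spells out the paper's one-line remark that ``the construction above commutes with base change.''
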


\begin{proof}
By Theorem~\ref{thm:gerby-km},
$\ed_k(\cG;p)=\rdim_p(\cG)$.  Substitute this equality into the preceding
faithful rank inequalities.
\end{proof}

\bibliographystyle{amsplain}
\bibliography{main}

\end{document}